\def\G{\dot {G}}
\def\U{\dot {U}}
\def\K{\dot {K}}
\def\T{\dot {T}}
\def\g{\mathfrak g}      
\def\h{\mathfrak h}      
\def\u{\mathfrak u}      
\def\t{\mathfrak t}      
\def\k{\mathfrak k}      
\def\n{\mathfrak n}      
\def\p{\mathfrak p}      
\def\a{\mathfrak a}
\def\l{\mathfrak l}
\def\b{\mathfrak b}
\def\c{\mathbf c}
\def\R{\mathbb R}        
\def\C{\mathbb C}        
\def\SU{\mathrm{SU}}     
\def\U{\mathrm{U}}       
\def\SL{\mathrm{SL}}     
\renewcommand{\sl}{\mathfrak{sl}}   
\newtheorem{theorem}{Theorem}[section]
\newtheorem{lemma}{Lemma}[section]
\newtheorem{proposition}{Proposition}[section]
\newtheorem{conjecture}{Conjecture}[section]
\theoremstyle{definition}
\newtheorem{definition}{Definition}
\newtheorem{example}{Example}[section]
\newtheorem*{remarks}{Remarks}
\theoremstyle{remark}
\newtheorem{remark}{Remark}[section]
\numberwithin{equation}{section}
\begin{document}
\title{Loops in Noncompact Groups of Inner Type and Factorization}
\author{Arlo Caine}
\email{jacaine@cpp.edu}

\author{Doug Pickrell}
\email{pickrell@math.arizona.edu}

\begin{abstract} In \cite{PP} we showed that a loop in a simply connected compact Lie group $\dot U$
has a unique Birkhoff (or triangular) factorization if and only if the loop has a
unique root subgroup factorization (relative to a choice of a reduced sequence of
simple reflections in the affine Weyl group). In this paper our main purpose is to investigate Birkhoff and
root subgroup factorization for loops in a noncompact type semisimple Lie group
$\dot G_0$ of inner type. In \cite{CP2} we showed that for an element of $\dot G_0$, i.e. a constant
loop, there is a unique Birkhoff factorization if and only if there is a root subgroup factorization.
However for loops in $\dot G_0$, while a root subgroup factorization
implies a unique Birkhoff factorization, there are several obstacles to
the converse. As in the compact case, root subgroup factorization is intimately related to
factorization of Toeplitz determinants.
\end{abstract}

\maketitle
\{2000 Mathematics Subject Classifications:  22E67\}

\setcounter{section}{-1}


\section{Introduction}

Finite dimensional Riemannian symmetric spaces come in dual pairs, one of compact
type and one of noncompact type. Given such a pair, there is a
diagram of finite dimensional groups
\begin{equation}\label{fdgroupdiagram}
\xymatrix{ & \dot G & \\ \dot G_0 \ar[ur] &  & \dot U \ar[ul] \\
 & \dot K \ar[ur] \ar[ul] & }
\end{equation}
where $\dot {U}$ is the universal covering of the identity component of the isometry group
of the compact type symmetric space $\dot {X}\simeq \dot {U}/\dot
{K}$, $\dot {G}$ is the complexification of $\dot {U}$, and
$\dot{G}_0$ is a covering of the isometry group for the dual noncompact
symmetric space $\dot {X}_0=\dot {G}_0/\dot {K}$.

The main purpose of this paper is to investigate Birkhoff (or triangular) factorization and ``root subgroup factorization" for the loop group of $\dot{G}_0$, assuming $\dot G_0$ is of inner type. Birkhoff factorization is investigated in \cite{CG} and \cite{PS}, from various points of view. In particular Birkhoff factorization for $L\dot U:=C^{\infty}(S^1,\dot U)$ is developed in Chapter 8 of \cite{PS}, using the Grassmannian model for the homogeneous space $L\dot U/\dot U$. Root subgroup factorization for generic loops in $\dot U$ appeared more recently in \cite{P3} (for $\dot U=SU(2)$, the rank one case) and in \cite{PP}. The Birkhoff decomposition for $L\G_0:=C^{\infty}(S^1,\dot G_0)$, i.e. the intersection of the Birkhoff decomposition for $L\dot G$ with $L\dot G_0$, is far more complicated than for $L\dot U$. With respect to root subgroup factorization, beyond loops in a torus (corresponding to imaginary roots), in the compact context the basic building blocks are exclusively spheres (corresponding to real roots), and in the inner noncompact context the building blocks are a combination of spheres and disks. This introduces additional analytic complications, and perhaps the main point of this paper is to communicate the problems that arise from noncompactness.

For $g\in L\dot U$, the basic fact is that $g$ has a unique triangular factorization if and only if $g$ has a unique ``root subgroup factorization" (relative to the choice of a reduced sequence of simple reflections in the affine Weyl group). This is
also true for elements of $\dot G_0$ (constant loops); see \cite{CP2}. However, somewhat to our surprise, this is far from true for loops in $\dot G_0$.

Relatively little sophistication is required to state the basic results, and identify the basic obstacles,
in the rank one noncompact case. This is essentially because (in addition to
loops in a torus) the basic building blocks are exclusively disks, and there is essentially a unique way to choose a reduced sequence of
simple reflections in the affine Weyl group, so that the dependence on this choice can be suppressed.

\subsection{The Rank 1 Case}

We consider the data determined by the Riemann sphere and the Poincar\'e disk. For this pair, the diagram (\ref{fdgroupdiagram}) becomes
\begin{equation}\label{rankonegroupdiagram}
\xymatrix{ & \SL(2,\mathbb C) & \\
\SU(1,1) \ar[ur] & & \SU(2) \ar[ul] \\
& \mathrm{S}(\U(1)\times \U(1)) \ar[ur] \ar[ul] & }
\end{equation}

Let $L_{fin}\SL(2,\mathbb C)$ denote the group consisting of maps $S^1\to SL(2,\mathbb C)$ having finite Fourier series, with pointwise multiplication. The subset of those functions having values in $\SU(1,1)$ is then a subgroup, denoted $L_{fin}\SU(1,1)$.

\begin{example}\label{noncompact_a} For each $\zeta \in \Delta:=\{\zeta\in\C\colon |\zeta|<1\}$ and $n\in \mathbb Z$, the function $S^1\to \SU(1,1)$ defined by
\begin{equation}\label{example0}
z \mapsto \mathbf a(\zeta) \begin{pmatrix} 1&
\zeta z^{-n}\\
\bar{\zeta}z^n&1\end{pmatrix}, \text{   where  }\mathbf a(\zeta)=(1-|\zeta|^2)^{-1/2},
\end{equation}
is in $L_{fin}\SU(1,1)$.
\end{example}
$L_{fin}\SU(2)$ and $L_{fin}\SU(1,1)$ are dense in the smooth loop groups $L\SU(2):=C^{\infty}(S^1,\SU(2))$ and $L\SU(1,1):=C^{\infty}(S^1,\SU(1,1))$, respectively. This is proven in the compact case in Proposition 3.5.3 of \cite{PS}, and the argument applies also for $\SU(1,1)$, taking into account the obvious modifications.

For a Laurent series $f(z)=\sum f_n z^n$, let $f^*(z)=\sum \bar f_n z^{-n}$. If $\Omega$ is a domain on the Riemann sphere, we write $H^0(\Omega)$ for the vector space of holomorphic scalar valued functions on $\Omega$.  If $f\in H^0(\Delta)$, then $f^* \in H^0(\Delta^*)$, where $\Delta^*$ denotes the open unit disk at $\infty$.

\begin{theorem}\label{SU(1,1)theorem1} Suppose that $g_1 \in L_{fin}\SU(1,1)$ and fix $n>0$. Consider the following three statements:

\begin{enumerate}
\item[(I.1)] $g_1$ is of the form
\[
g_1(z)=\begin{pmatrix} a(z)&b(z)\\
b^*(z)&a^*(z)\end{pmatrix},\quad z\in S^1,
\]
where $a$ and $b$ are polynomials in $z$ of order $n-1$ and $n$, respectively, with $a(0)>0$.
\item[(I.2)] $g_1$ has a ``root subgroup factorization'' of the form
\[
g_1(z)=\mathbf a(\eta_n)\begin{pmatrix} 1&\bar{\eta}_nz^n\\
\eta_nz^{-n}&1\end{pmatrix}..\mathbf
a(\eta_0)\begin{pmatrix} 1&
\bar{\eta}_0\\
\eta_0&1\end{pmatrix},
\]
for some sequence $(\eta_i)_{i=0}^n$ in $\Delta$ and $\mathbf{a}\colon \Delta\to \R$ is the function in Example \ref{noncompact_a}.
\item[(I.3)] $g_1$ has triangular factorization of the form
\[
\begin{pmatrix} 1&0\\
\sum_{j=0}^n \bar y_jz^{-j}&1\end{pmatrix}\begin{pmatrix} a_1&0\\
0&a_1^{-1}\end{pmatrix}\begin{pmatrix} \alpha_1 (z)&\beta_1 (z)\\
\gamma_1 (z)&\delta_1 (z)\end{pmatrix},
\]
where $a_1>0$, the third factor is a matrix valued polynomial in $z$ which is unipotent upper
triangular at $z=0$.
\end{enumerate}
Statements (I.1) and (I.3) are equivalent. (I.2) implies (I.1) and (I.3). If $g_1$ is in the identity connected component of the sets in (I.1) and (I.3), then the converse holds, i.e. $g_1$ has a root subgroup factorization as in (I.2).

There is a similar set of implications for $g_2 \in L_{fin}SU(1,1)$ and the following statements:
\begin{enumerate}
\item[(II.1)] $g_2$ is of the form
\[
g_2(z)=\begin{pmatrix} d^{*}(z)&c^{*}(z)\\
c(z)&d(z)\end{pmatrix},\quad z\in S^1,
\]
where $c$ and $d$ are polynomials in $z$ of order $n$ and $n-1$, respectively, with $c(0)=0$ and $d(0)>0$.
\item[(II.2)] $g_2$ has a ``root subgroup factorization'' of the form
\[
g_2(z)=\mathbf a(\zeta_n)\begin{pmatrix} 1&\zeta_nz^{-n}\\
\bar{\zeta}_nz^n&1\end{pmatrix}..\mathbf
a(\zeta_1)\begin{pmatrix} 1&
\zeta_1z^{-1}\\
\bar{\zeta}_1z&1\end{pmatrix},
\]
for some sequence $(\zeta_k)_{k=1}^n$ in $\Delta$ and $\mathbf{a}\colon \Delta\to \R$ is the function in Example \ref{noncompact_a}.
\item[(II.3)] $g_2$ has a triangular factorization of the form
\[
\begin{pmatrix} 1&\sum_{j=1}^n \bar x_jz^{-j}\\
0&1\end{pmatrix}\begin{pmatrix} a_2&0\\
0&a_2^{-1}\end{pmatrix}\begin{pmatrix} \alpha_2 (z)&\beta_2 (z)\\
\gamma_2 (z)&\delta_2 (z)\end{pmatrix},
\]
where $a_2>0$, and the third factor is a matrix valued polynomial in $z$ which is unipotent
upper triangular at $z=0$.
\end{enumerate}
When $g_1$ and $g_2$ have root subgroup factorizations, the scalar entries determining the diagonal factor have the product form
\begin{equation}
a_1=\prod_{i=0}^n \mathbf a(\eta_i) \text{ and } a_2^{-1}=\prod_{k=1}^n \mathbf
a(\zeta_k), \text{ respectively.}
\end{equation}
\end{theorem}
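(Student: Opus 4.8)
The plan is to treat the two rows of $g_1$ as coupled through the $\SU(1,1)$ reality condition, so that everything reduces to statements about the first row $(a,b)$ and the scalar sequence $(y_j)$ defining the lower factor. Throughout I would use that a loop in $L_{fin}\SU(1,1)$ is determined on $S^1$ by its first row, the second being $(b^*,a^*)$, and that the unimodularity condition reads $a\,a^*-b\,b^*=1$ as an identity of Laurent polynomials. I would establish the implications in the order (I.2)$\Rightarrow$(I.1), then (I.1)$\Leftrightarrow$(I.3), and finally (I.1)$\Rightarrow$(I.2) on the identity component; since (I.2)$\Rightarrow$(I.1) and (I.1)$\Leftrightarrow$(I.3) give (I.2)$\Rightarrow$(I.3) with automatic uniqueness of the triangular factorization, the implications for $g_1$ all reduce to these three steps. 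The statements for $g_2$ follow by the symmetry exchanging upper- and lower-triangular roles, and the product formulas drop out of the first two steps.

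For (I.2)$\Rightarrow$(I.1) I would induct on $n$. Each factor $\mathbf a(\eta_i)\left(\begin{smallmatrix}1 & \bar\eta_i z^i\\ \eta_i z^{-i}&1\end{smallmatrix}\right)$ lies in $\SU(1,1)$ by Example \ref{noncompact_a}, so the product lies in $L_{fin}\SU(1,1)$ and automatically has first row $(a,b)$ with $(b^*,a^*)$ below. Multiplying the partial product $\left(\begin{smallmatrix}A&B\\ C&D\end{smallmatrix}\right)$ on the left by the next factor sends the $(1,1)$ entry to $A+\bar\eta_k z^k C$; since the $(2,1)$ entry $C$ of the inner product has lowest degree $z^{-(k-1)}$, the term $\bar\eta_k z^k C$ contributes nothing in degree $0$, so by induction the constant term of the $(1,1)$ entry stays equal to $\prod_i\mathbf a(\eta_i)>0$, which is the asserted value of $a(0)$, while the top degrees grow to those recorded in (I.1). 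This simultaneously yields the product formula $a_1=\prod_{i=0}^n \mathbf a(\eta_i)$ once the identification $a_1=a(0)$ is in hand.

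The equivalence (I.1)$\Leftrightarrow$(I.3) is the analytic heart and, pleasantly, is unconditional. Writing $l,d,u$ for the three factors of (I.3), the direction (I.3)$\Rightarrow$(I.1) is immediate: the top row of $l\,d\,u$ is $(a_1\alpha_1,\,a_1\beta_1)$, a pair of polynomials, with $a(0)=a_1\alpha_1(0)=a_1>0$ because $u(0)$ is upper unipotent; the reality condition then forces the second row to be $(b^*,a^*)$, giving (I.1) and $a_1=a(0)$. For the converse I would set $a_1=a(0)$ and solve for $m(z)=\sum_{j=0}^n \bar y_j z^{-j}$ by requiring that
\[ u=d^{-1}l^{-1}g_1=\begin{pmatrix} a_1^{-1}a & a_1^{-1}b\\ a_1(b^*-ma) & a_1(a^*-mb)\end{pmatrix} \]
be holomorphic on $\Delta$ with $u(0)$ upper unipotent. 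Demanding that $b^*-ma$ carry no powers $z^k$ with $k\le 0$ is a linear system in the $y_j$ whose matrix is triangular with diagonal entries $a(0)\ne 0$, hence uniquely solvable. The observation that makes this unconditional is that holomorphy of $a^*-mb$ is then automatic: writing $b^*-ma=z\,q$ one has
\[ a(a^*-mb)=(aa^*-bb^*)+zq\,b=1+zq\,b, \]
and since $a$ has nonzero constant term, any nonzero strictly negative power in $a^*-mb$ would survive on the left, contradicting the right side; evaluating the constant term gives $(a^*-mb)(0)=a(0)^{-1}$, so $u(0)$ is upper unipotent and $\det u=\det g_1=1$. Thus $u$ is a polynomial $\SL(2,\C)$ matrix and (I.3) holds.

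Finally, for (I.1)$\Rightarrow$(I.2) on the identity component — the step where noncompactness bites, and hence the main obstacle — I would peel factors from the top. Left multiplication of $g_1$ by the inverse of $\mathbf a(\eta_n)\left(\begin{smallmatrix}1&\bar\eta_n z^n\\ \eta_n z^{-n}&1\end{smallmatrix}\right)$ lowers the top degree of the $(1,1)$ entry exactly when $\bar\eta_n=a_n/\overline{b(0)}$, and the relation $a_n\bar a_0=b_n\bar b_0$ extracted from $aa^*-bb^*=1$ (with $a_0=a(0)>0$) shows the same choice simultaneously lowers the degree of the $(1,2)$ entry; the reduced loop is again of the form (I.1) with $n$ replaced by $n-1$, and iterating down to the constant factor $\mathbf a(\eta_0)\left(\begin{smallmatrix}1&\bar\eta_0\\ \eta_0&1\end{smallmatrix}\right)$ produces (I.2). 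The nontrivial point — and the reason the statement is restricted to the identity component — is that each peeled parameter must satisfy $|\eta_i|<1$, so that $\mathbf a(\eta_i)$ is real and the factor lies in $\SU(1,1)$; the terminal $\eta_0=\overline{b(0)}/a(0)$ satisfies this automatically since $a(0)^2=1+|b(0)|^2$, but the intermediate inequalities $|a_n|<|b(0)|$ can fail on components other than the identity one. I expect the crux of the theorem to be showing that the identity-component hypothesis propagates through the induction so that every $\eta_i$ lands in $\Delta$; this is precisely where the building blocks being disks rather than spheres, emphasized in the introduction, obstructs a naive converse. Running the identical argument with upper- and lower-triangular roles exchanged then yields the implications for $g_2$ together with $a_2^{-1}=\prod_{k=1}^n\mathbf a(\zeta_k)$.
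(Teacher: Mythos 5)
Your first two blocks are essentially right, and in fact more elementary than what the paper does: the induction giving (I.2)$\Rightarrow$(I.1) with $a(0)=\prod_{i=0}^n\mathbf a(\eta_i)$, and the unconditional equivalence (I.1)$\Leftrightarrow$(I.3) via the triangular linear system for the $y_j$ (diagonal entries $a(0)\neq 0$) plus the identity $a(a^*-mb)=1+zqb$, which makes holomorphy of the $(2,2)$ entry automatic, are a legitimate rank-one substitute for the paper's route (the paper instead deduces all of this from the general Theorem \ref{Gtheorem1}, proving (II.2)$\Rightarrow$(II.3) by an induction on root subgroup factors inside the affine setting, and (II.1)$\Rightarrow$(II.3) by pointwise LDU factorization with the ratio-of-determinants formula (\ref{lower})). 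The genuine gap is in the last implication, the identity-component converse, which is the actual content of the theorem. You set up the peeling induction and then state that you ``expect the crux'' to be showing that each peeled parameter satisfies $|\eta_i|<1$; no argument is offered for this, and none can come out of a coefficient-by-coefficient induction: the hypothesis ``$g_1$ lies in the identity connected component of the set in (I.1)'' is a property of the set, not of the individual loop's Laurent coefficients, so there is nothing local for the induction to propagate. In addition, your peeling step is only generically defined: the choice $\bar\eta_n=a_n/\overline{b(0)}$ divides by $b(0)$, which vanishes in genuine factorizations (e.g. whenever $\eta_0=0$), and you would further need the reduced loop to remain in the identity component of the smaller set, which is not addressed.

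The paper's proof of exactly this step (Remark \ref{proofofconjecture1}, inside the proof of Theorem \ref{Gtheorem1}) is global and topological rather than inductive, and it is worth seeing why. The forward computation shows that the parameter map from the polydisk of $(\zeta_j)$ (disks for noncompact type roots) to the lower factor $l(g^{(n)})\in\exp(\oplus_{j=1}^n\mathbb Cf_{\tau_j})$ is injective and open; the key use of the product formula $a_2=\prod_j\mathbf a(\zeta_j)^{h_{\tau_j}}$ (your $a_2^{-1}=\prod_k\mathbf a(\zeta_k)$, resp. $a_1=\prod_i\mathbf a(\eta_i)$) is to show the image is \emph{closed} in the set of lower factors $l_2$ arising from loops admitting a triangular factorization: as parameters tend to the boundary, $|\zeta_j|\to 1$, the factor $\mathbf a(\zeta_j)\to\infty$, the diagonal degenerates and the triangular factorization fails, so a limit point of the image inside that set must come from interior parameters. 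Being open and closed and containing the trivial factor, the image is precisely the connected component of the identity; since $n$ is fixed in Theorem \ref{SU(1,1)theorem1}, this yields the converse implication. Any repair of your peeling argument would in effect have to prove this openness-plus-closedness statement anyway, so the topological argument is not an optional refinement but the missing idea; note also that the paper's Example following the theorem (where $\zeta_2=x_2$ with $|x_2|\ge 1$ possible) shows concretely that the modulus condition genuinely fails off the identity component, so no unconditional degree-lowering scheme can succeed.
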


In general we do not know how to describe the connected component in the first and third conditions.
The following example shows how disconnectness arises in the simplest nontrivial case.

\begin{example} Consider the case $n=2$ and
$g_2$ is as in II.3 with $x=x_1z+x_2z^2$, $1-x_2\bar x_2\ne 0$,
$$\alpha_2=1-a_2^{-2}\bar x_1x_2z, \quad \beta_2=-\frac{a_2^{-2}\bar x_1^2 x_2}{1-x_2\bar x_2}$$
$$\gamma_2=\frac{x_1}{1-x_2\bar x_2}z+x_2z^2,\quad \delta_2=1+\frac{\bar x_1x_2}{1-x_2\bar x_2}z$$
and
$$a_2^2=\frac{(1-x_2\bar x_2)^2-x_1\bar x_1}{1-x_2\bar x_2}.$$
It is straightforward to check that this $g_2$ does indeed have values in $SU(1,1)$.
In order for $a_2^2>0$, there are two possibilities: the first is that both the numerator and denominator are positive, in
which case there is a root subgroup factorization, and the second is that both the top and bottom are negative, in which
case root subgroup factorization fails (because when there is a root subgroup factorization, $ \zeta_2= x_2$, and we must
have $\vert \zeta_2\vert<1$).
\end{example}

In order to formulate a general factorization result, we need a $C^{\infty}$ version of Theorem \ref{SU(1,1)theorem1}.

\begin{theorem}\label{SU(1,1)theorem1'} Suppose that $g_1 \in L\SU(1,1)$. The following conditions are equivalent:
\begin{enumerate}
\item[(I.1)] $g_1$ is of the form
\[
g_1(z)=\begin{pmatrix} a(z)&b(z)\\
b^*(z)&a^*(z)\end{pmatrix},\quad z\in S^1,
\]
where $a$ and $b$ are holomorphic in $\Delta$ and have $C^{\infty}$ boundary values, with $a(0)>0$.
\item[(I.3)] $g_1$ has triangular factorization of the form
\[
\begin{pmatrix} 1&0\\
y^*&1\end{pmatrix}\begin{pmatrix} a_1&0\\
0&a_1^{-1}\end{pmatrix}\begin{pmatrix} \alpha_1 (z)&\beta_1 (z)\\
\gamma_1 (z)&\delta_1 (z)\end{pmatrix},
\]
where $y$ is holomorphic in $\Delta$ with $C^{\infty}$ boundary values, $a_1>0$, and
the third factor is a matrix valued polynomial in $z$ which is unipotent upper
triangular at $z=0$.
\end{enumerate}
Similarly if $g_2 \in LSU(1,1)$, the following statements are equivalent:
\begin{enumerate}
\item[(II.1)] $g_2$ is of the form
\[
g_2(z)=\begin{pmatrix} d^{*}(z)&c^{*}(z)\\
c(z)&d(z)\end{pmatrix},\quad z\in S^1,
\]
where $c$ and $d$ are holomorphic in $\Delta$ and have $C^{\infty}$ boundary values, with $c(0)=0$ and $d(0)>0$.
\item[(II.3)] $g_2$ has a triangular factorization of the form
\[
\begin{pmatrix} 1&x^*\\
0&1\end{pmatrix}\begin{pmatrix} a_2&0\\
0&a_2^{-1}\end{pmatrix}\begin{pmatrix} \alpha_2 (z)&\beta_2 (z)\\
\gamma_2 (z)&\delta_2 (z)\end{pmatrix},
\]
where $a_2>0$, $x$ is holomorphic in $\Delta$ and has $C^{\infty}$ boundary values, $x(0)=0$, and the third factor is a matrix valued function which is holomorphic in $\Delta$ and has $C^{\infty}$ boundary values, and is unipotent
upper triangular at $z=0$.
\end{enumerate}
\end{theorem}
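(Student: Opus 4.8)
The plan is to prove each equivalence by treating its two implications separately, the forward (harder) direction being a Riemann--Hilbert problem controlled by the identity $aa^*-bb^*=1$, which holds because $g_1$ takes values in $\SU(1,1)$. The implication $(\mathrm{I.3})\Rightarrow(\mathrm{I.1})$ is immediate: multiplying out the three factors, the top row of $g_1$ is $(a_1\alpha_1,a_1\beta_1)$, which is holomorphic in $\Delta$ with $C^{\infty}$ boundary values because $\alpha_1,\beta_1$ are, and $a(0)=a_1\alpha_1(0)=a_1>0$ since the third factor is unipotent upper triangular at $z=0$. Since every element of $\SU(1,1)$ with top row $(a,b)$ has bottom row $(\bar b,\bar a)$, which on $S^1$ is $(b^*,a^*)$, the loop $g_1$ has exactly the form $(\mathrm{I.1})$.

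For $(\mathrm{I.1})\Rightarrow(\mathrm{I.3})$, set $a_1=a(0)>0$ and seek $y$ holomorphic in $\Delta$ with $C^{\infty}$ boundary values so that, with $g_-=\left(\begin{smallmatrix}1&0\\ y^*&1\end{smallmatrix}\right)$, the product $g_-^{-1}g_1$ is holomorphic in $\Delta$ and, after removing the constant diagonal factor $\mathrm{diag}(a_1,a_1^{-1})$, unipotent upper triangular at $z=0$. Computing $g_-^{-1}g_1$, the top row is automatically $(a,b)$, so the requirement reduces to the two coupled scalar conditions that $b^*-y^*a$ and $a^*-y^*b$ be boundary values of functions holomorphic in $\Delta$, with $(b^*-y^*a)(0)=0$; one then reads off $\alpha_1=a/a_1$, $\beta_1=b/a_1$, $\gamma_1=a_1(b^*-y^*a)$ and $\delta_1=a_1(a^*-y^*b)$, and the determinant relation guarantees $\alpha_1\delta_1-\beta_1\gamma_1=1$.

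The heart of the matter is that this coupled pair of conditions has a \emph{unique} solution $y$, equivalently that the block Toeplitz operator $A(g_1)=\pr_+\circ M_{g_1}|_{H_+}$ on the Hardy space $H_+$ of $\C^2$-valued boundary values of functions holomorphic in $\Delta$ is invertible; by Chapter~8 of \cite{PS} this invertibility is exactly the condition for a triangular factorization in the top stratum. Since $|a|^2=1+|b|^2\ge 1$ on $S^1$ the symbol $g_1$ is smooth and nonvanishing, so $A(g_1)$ is Fredholm, and $\det g_1=1$ forces index $0$; the remaining, and main, difficulty is to rule out a kernel. This is where noncompactness bites: in the compact $\SU(2)$ case the analogous operator obeys a positivity estimate coming from $|a|^2+|b|^2=1$, whereas the indefinite relation $aa^*-bb^*=1$ furnishes only the $J$-unitarity $g_1^*Jg_1=J$ with $J=\mathrm{diag}(1,-1)$, from which no positivity follows. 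Concretely, $a$ may vanish inside $\Delta$, so the scalar Toeplitz operator with symbol $a$ alone is not invertible, and uniqueness of $y$ is recovered only through the coupling of the two conditions by $aa^*-bb^*=1$; making this quantitative is the crux. Once $y$ is produced, $C^{\infty}$ regularity of $y$ and of the remaining factors follows because the Wiener--Hopf factors of a smooth nonvanishing symbol are again smooth.

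A softer alternative route to existence is to invoke density of $L_{fin}\SU(1,1)$ in $L\SU(1,1)$ together with the polynomial case, Theorem~\ref{SU(1,1)theorem1}: approximate $(a,b)$ by polynomial data, factor each approximant, and pass to the limit, which requires only openness of the top stratum and continuity of the factorization there. Finally, the equivalence $(\mathrm{II.1})\Leftrightarrow(\mathrm{II.3})$ is established by the same Riemann--Hilbert argument applied to the upper-triangular normalization: one seeks $x$ holomorphic in $\Delta$ with $x(0)=0$ so that $\left(\begin{smallmatrix}1&x^*\\0&1\end{smallmatrix}\right)^{-1}g_2$ is holomorphic in $\Delta$, the roles of the two rows (equivalently of $\Delta$ and $\Delta^*$, and of the upper- and lower-triangular subgroups) being interchanged; the analysis is otherwise identical, with $c(0)=0$ and $d(0)>0$ playing the roles of $\gamma_1(0)=0$ and $a_1>0$.
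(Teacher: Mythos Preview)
Your treatment of the easy direction (I.3) $\Rightarrow$ (I.1) is correct, and you correctly reduce the hard direction to showing that the block Toeplitz operator $A(g_1)$ has trivial kernel. But you then stop at ``making this quantitative is the crux'' without supplying any argument; since this \emph{is} the entire content of the implication, the proof is not yet a proof. Your alternative route via density is also flawed: approximating $g_1$ by polynomial loops in $\SU(1,1)$ and factoring each approximant does not show that the \emph{limit} lies in the top Birkhoff stratum, because that stratum, while open, is not closed---indeed the paper shows (Proposition~\ref{SU(1,1)theorem3}) that it is a proper subset of the identity component of $L\SU(1,1)$.

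The paper supplies exactly the kernel argument you are missing, in the proof of the general version, Theorem~\ref{Gtheorem1smooth}. Specialized to rank one (say the $g_2$ case), write $C_1=(d,-c)^T$, $C_2=(-c^*,d^*)^T$ for the columns of $g_2^{-1}$ and $C_1^*=(d^*,c^*)$, $C_2^*=(c,d)$ for the rows of $g_2$; hypothesis (II.1) makes $C_2^*$ and $C_1$ holomorphic in $\Delta$. If $f\in\ker A(g_2)$ then $(C_2^*f)_+=0$, but $C_2^*f$ is already in $H_+$, so $C_2^*f\equiv 0$ on $S^1$, whence $f=\lambda\, C_1$ pointwise for some scalar function $\lambda$. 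Provided $C_1$ is nowhere zero in $\Delta$, $\lambda$ extends holomorphically (locally as a ratio $f_j/(C_1)_j$), and then $(C_1^*f)_+=(\lambda)_+=\lambda$ forces $\lambda=0$, hence $f=0$. This nonvanishing---that $(c,d)$, respectively $(a,b)$, have no common zero in $\Delta$---is an explicit hypothesis in Theorem~\ref{Gtheorem1smooth} though it is omitted from the introductory statement of Theorem~\ref{SU(1,1)theorem1'}; without it the implication genuinely fails, since for a common zero $z_0\in\Delta$ the vector $f=(z-z_0)^{-1}C_1$ lies in $H_+$ and satisfies $g_2f=(z-z_0)^{-1}e_1\in H_-$, giving a nontrivial kernel.
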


Let $\sigma\colon \SL(2,\C)\to \SL(2,\C)$ denote the anti-holomorphic involution of $\SL(2,\mathbb C)$ which fixes $SU(1,1)$; explicitly
\[
\sigma(\begin{pmatrix}a&b\\c&d\end{pmatrix})=\begin{pmatrix}d^*&c^*\\b^*&a^*\end{pmatrix}.
\]

The following theorem is the analogue of Theorem 0.2 of \cite{P3} (the notation
in part (b) is taken from Section 1 of \cite{P3}, and reviewed below the statement of the theorem).

\begin{theorem}\label{SU(1,1)theorem2}  Suppose $g\in LSU(1,1)_{(0)}$, the identity component. Then $g$ has a unique
``partial root subgroup factorization" of the form
\[
g(z)=\sigma(g_1^{-1}(z))\begin{pmatrix} e^{\chi(z)}&0\\
0&e^{-\chi(z)}\end{pmatrix}g_2(z)
\]
where $\chi \in C^{\infty}(S^1,i\mathbb R)/2\pi i\mathbb Z$ and
$g_1$ and $g_2$ are as in Theorem \ref{SU(1,1)theorem1'}, if and only $g$ has a triangular factorization $g=lmau$ (see (\ref{trifactorization}) below) such that the boundary values of $l_{21}/l_{11}$
and $u_{21}/u_{22}$ are $<1$ in magnitude on $S^1$.
\end{theorem}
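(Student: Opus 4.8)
The plan is to pass from the loop $g$ to two scalar boundary functions coming from its triangular factorization, to observe that the two magnitude conditions are in fact a single positivity condition, and then to reconstruct $g_1,g_2,\chi$ from these scalars by Szegő (outer) factorization. First note that since $\sigma$ fixes $\SU(1,1)$ pointwise and $g_1$ is $\SU(1,1)$-valued, $\sigma(g_1^{-1})=g_1^{-1}$; the $\sigma$ is retained only to parallel the compact statement of \cite{P3}. Writing a general $g\in L\SU(1,1)$ as $g=\begin{pmatrix}A&B\\ B^{*}&A^{*}\end{pmatrix}$ with $|A|^{2}-|B|^{2}=1$ on $S^{1}$, I would introduce
\[ \phi:=u_{21}/u_{22},\qquad \psi:=l_{21}/l_{11}, \]
which extend holomorphically to $\Delta$ (vanishing at $0$) and to $\Delta^{*}$ (vanishing at $\infty$), respectively. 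Using $\det l=\det u=1$, a direct computation gives $g\begin{pmatrix}1\\ -\phi\end{pmatrix}=\lambda\begin{pmatrix}1\\ \psi\end{pmatrix}$ with $\lambda=A-B\phi$, whence $\psi=(B^{*}-A^{*}\phi)/(A-B\phi)$ on $S^{1}$. The linchpin will be the identity, obtained by expanding and using $|A|^{2}-|B|^{2}=1$,
\[ 1-|\psi|^{2}=\frac{1-|\phi|^{2}}{|A-B\phi|^{2}}\qquad\text{on }S^{1}. \]
Because $|A-B\phi|^{2}>0$, the quantities $1-|\psi|^{2}$ and $1-|\phi|^{2}$ share sign pointwise; hence the two conditions $|l_{21}/l_{11}|<1$ and $|u_{21}/u_{22}|<1$ are equivalent, and the hypothesis reduces to the single positivity $1-|\phi|^{2}>0$, which is exactly the Szegő condition used below.

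For the implication that this positivity yields the factorization, I would reconstruct the factors by scalar spectral factorization. Let $d$ be the outer function with $|d|^{2}=(1-|\phi|^{2})^{-1}$ on $S^{1}$ and $d(0)>0$, and put $c:=\phi d$; then $c(0)=0$, $|d|^{2}-|c|^{2}=1$, and $g_{2}=\begin{pmatrix}d^{*}&c^{*}\\ c&d\end{pmatrix}$ is of type (II.1) of Theorem \ref{SU(1,1)theorem1'}. Symmetrically let $a$ be the outer function with $|a|^{2}=(1-|\psi|^{2})^{-1}$, $a(0)>0$, and set $b:=-a\psi^{*}$; the point is that $b$ is automatically holomorphic in $\Delta$, since $\psi$ holomorphic in $\Delta^{*}$ forces $\psi^{*}$ holomorphic in $\Delta$, while $|a|^{2}-|b|^{2}=|a|^{2}(1-|\psi|^{2})=1$, so $g_{1}=\begin{pmatrix}a&b\\ b^{*}&a^{*}\end{pmatrix}$ is of type (I.1). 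Finally I would define $\chi$ by $e^{\chi}=(Ad-Bc)/a^{*}$; using $\det g_{2}=1$ and the displayed identity one checks $|Ad-Bc|=|a|$ on $S^{1}$, so $e^{\chi}$ is unimodular and $\chi$ is $i\R$-valued, and a direct multiplication (using $b=-a\psi^{*}$ and the conjugate slope relation $B-A\phi^{*}=\lambda^{*}\psi^{*}$) verifies $g=g_{1}^{-1}\,\mathrm{diag}(e^{\chi},e^{-\chi})\,g_{2}$.

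For the converse and for uniqueness I would run this backwards. Given any such factorization, applying $g$ to the line through $\begin{pmatrix}1\\ -c/d\end{pmatrix}$ and peeling off $g_{2}$, $\mathrm{diag}(e^{\chi},e^{-\chi})$ and $g_{1}^{-1}$ in turn yields
\[ g\begin{pmatrix}1\\ -c/d\end{pmatrix}=\frac{e^{\chi}a^{*}}{d}\begin{pmatrix}1\\ -b^{*}/a^{*}\end{pmatrix}, \]
so $c/d$ and $-b^{*}/a^{*}$ form a holomorphic-in-$\Delta$/holomorphic-in-$\Delta^{*}$ pair realizing the line map of $g$. Identifying them with the genuine slopes $u_{21}/u_{22}$ and $l_{21}/l_{11}$ by uniqueness of the triangular factorization forces them to equal $\phi,\psi$, after which the bounds $|\phi|=|c/d|<1$ and $|\psi|=|b/a|<1$ are immediate from $\det g_{2}=1$ and $\det g_{1}=1$ on $S^{1}$. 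Uniqueness of the whole factorization then follows from uniqueness of the triangular factorization together with uniqueness of outer functions normalized positive at the origin: $\phi,\psi$ determine $d,a$, hence $c,b$, hence $\chi$ modulo $2\pi i\mathbb{Z}$.

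The step I expect to be the main obstacle is precisely the alignment just invoked: showing that in an arbitrary factorization the data are the genuine triangular slopes — equivalently that $-b^{*}/a^{*}$ vanishes at $\infty$, i.e. $b(0)=0$ — since type (I.1) by itself permits $b(0)\neq0$. The resolution should be that the normalizations $a(0)>0$, $d(0)>0$, $c(0)=0$ together with the requirement $\chi\in i\R$ rigidify the decomposition in the manner of a Gauss/Birkhoff decomposition. Tied to this is the topological bookkeeping for $\chi$: one must verify that $(Ad-Bc)/a^{*}$ has winding number zero, so that $\chi$ exists as a genuine element of $C^{\infty}(S^{1},i\R)/2\pi i\mathbb{Z}$ and the reconstructed product lands in the identity component rather than an adjacent Birkhoff stratum (i.e. $m=1$). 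Controlling this winding is where membership of $g$ in $L\SU(1,1)_{(0)}$ must be used, and following \cite{P3} I expect it to be handled through the associated Toeplitz/Hankel operators, whose invertibility encodes both the triangular factorization and the positivity $1-|\phi|^{2}>0$.
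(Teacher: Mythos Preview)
Your approach is genuinely different from the paper's. The paper proves this result as the rank-one specialization of Theorem~\ref{Gsmooththeorem}: for (ii)$\Rightarrow$(i) it directly assembles the triangular factorization of $g$ from those of $g_1,g_2,\exp(\chi)$, the structural point being that $\Theta(l_1^*)\exp(\chi)l_2$ takes values in $\dot B^+$ (upper-triangular in rank one) and therefore has an unobstructed loop-triangular factorization; for (i)$\Rightarrow$(ii) it applies the pointwise $\dot N^+\dot A\dot G_0$ decomposition to $l^{-1}$ and $u$ and then Birkhoff-factors the resulting scalar $\dot A$-parts. Your outer-function reconstruction of $g_1,g_2$ from $\phi,\psi$ is the concrete rank-one incarnation of that last step, and the identity $1-|\psi|^2=(1-|\phi|^2)/|A-B\phi|^2$ (showing the two magnitude hypotheses are equivalent) is a pleasant observation the paper does not isolate. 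The winding obstruction you flag is real but is exactly what the paper's contractibility argument handles: writing $\Theta(g_1^*)^{-1}gg_2^{-1}$ as a product of unipotents, positive diagonals, and exponentials of holomorphic pieces, each contractible, places it in $(L\dot T)_0$.

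The genuine gap is in your converse. You compute the line relation
\[
g\begin{pmatrix}1\\ -c/d\end{pmatrix}=\frac{e^{\chi}a^{*}}{d}\begin{pmatrix}1\\ -b^{*}/a^{*}\end{pmatrix}
\]
and then invoke ``uniqueness of the triangular factorization'' to identify $(c/d,-b^*/a^*)$ with $(\phi,\psi)$. But at this point you have not yet shown that $g$ \emph{has} a triangular factorization; that existence is part of what the converse must establish. A single line relation with the stated analyticity does not by itself place $g$ in the big Birkhoff cell (one nowhere-vanishing holomorphic line subbundle of the associated rank-two bundle does not pin down its splitting type). The paper closes this gap not through lines but by explicitly constructing $l(g),m(g),a(g),u(g)$ from the factorization data, using the $\dot B^+$-valuedness of the middle piece mentioned above (see (\ref{theta_included})--(\ref{gfactorization})). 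Incidentally, the obstacle you single out --- forcing $b(0)=0$ --- is a misdiagnosis: the normalization of $l$ in (\ref{trifactorization}) allows $l_{21}(\infty)\ne 0$, so $\psi(\infty)=-\overline{b(0)}/\overline{a(0)}$ need not vanish; the real missing ingredient is existence of the triangular factorization itself.
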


The following example shows that the unaesthetic condition on the boundary values in part (b) is essential.

\begin{example} Consider $g_2$ as in Theorem \ref{SU(1,1)theorem1}. The loop $g=g_2^*$ (the Hermitian conjugate of $g_2$ around
the circle) has triangular factorization
\[
g=\begin{pmatrix} \alpha_2^*&\gamma_2^*\\
\beta_2^*&\delta_2^*\end{pmatrix}\begin{pmatrix} a_2&0\\
0&a_2^{-1}\end{pmatrix} \begin{pmatrix} 1&0\\
\sum_{j=1}^n x_jz^{j}&1\end{pmatrix}.
\]
If $n=2$, then $x_1=\bar{\zeta}_1(1-\vert\zeta_2\vert^2)$ and $x_2=\bar{\zeta}_2$, and this loop will often not satisfy the condition $\vert x_1z+x_2z^2\vert<1$ on $S^1$. In this case $g$ will not have a partial root subgroup factorization in the sense of Theorem \ref{SU(1,1)theorem2}.
\end{example}

The group $LSL(2,\mathbb C)$ has a Birkhoff decomposition
\[
LSL(2,\mathbb C)=\bigsqcup_{w\in W} \Sigma^{LSL(2,\mathbb C)}_w
\]
where $W$ (an affine Weyl group, and in this case the infinite dihedral group) is a quotient of a discrete group of unitary loops
\begin{eqnarray*}
W & = & \{\mathbf{w} =\begin{pmatrix} z^n&0\\0&z^{-n}\end{pmatrix}\left(\begin{matrix}0&-1\\1&0\end{matrix}\right)^{\epsilon}:n\in\mathbb Z,\epsilon\in \mathbb Z_4\}/\{\pm\left(\begin{matrix}1&0\\0&1\end{matrix}\right)\} \\
& = & \langle r_0,r_1 \vert r_0^2=r_1^2=1\rangle,
\end{eqnarray*}
where
\[
r_0=\left[\left(\begin{matrix}0&-z^{-1}\\z&0\end{matrix}\right)\right],
r_1=\left[\left(\begin{matrix}0&-1\\1&0\end{matrix}\right)\right]
\] (the reflections corresponding to the two simple roots for the Kac-Moody extension of $\sl(2,\mathbb C)$).
The set $\Sigma^{LSL(2,\mathbb C)}_w$ consists of loops which have a (Birkhoff) factorization of the form
\begin{equation}\label{trifactorization}g=l\cdot \mathbf{w} \cdot m\cdot a\cdot u,\end{equation}
where $w=[\mathbf w]$,
\[
l=\left(\begin{array}{cc}
l_{11}&l_{12}\\
l_{21}&l_{22}\end{array} \right)\in H^0(\Delta^{*},G),\quad
l(\infty )=\left(\begin{array}{cc}
1&0\\
l_{21}(\infty )&1\end{array} \right),
\]
$l$ has smooth boundary values on $S^1$, $m=\left(\begin{array}{cc}
m_0&0\\
0&m_0^{-1}\end{array} \right)$, $m_0\in S^1$,
$a=\left(\begin{array}{cc}
a_0&0\\
0&a_0^{-1}\end{array} \right)$, $a_0>0$,
\[u=\left(\begin{array}{cc}
u_{11}&u_{12}\\
u_{21}&u_{22}\end{array} \right)\in H^0(\Delta ,G),\quad
u(0)=\left(\begin{array}{cc}
1&u_{12}(0)\\
0&1\end{array} \right),\]
and $u$ has smooth boundary values on $S^1$. If $w=1$, the generic case, then we say (as in Section 1 of \cite{P3}) that $g$ has a triangular factorization, and in this case the factors are unique.

Next, let $LSU(1,1)_{(n)}$ denote the connected component containing
$\begin{pmatrix} z^n&0\\0&z^{-n}\end{pmatrix}\in Hom(S^1;\T)$, and let
\[
\Sigma^{LSU(1,1)}_w:=\Sigma^{LSL(2,\mathbb C)}_w \cap LSU(1,1)
\]
and
\[
\Sigma^{LSU(1,1)_{(n)}}_w:=\Sigma^{LSL(2,\mathbb C)}_w \cap LSU(1,1)_{(n)}.
\]
Based on finite dimensional results in \cite{CP2}, and the compact case, one might expect the following to be true:
\begin{enumerate}
\item[(1)] Modulo $\T$, the circle subgroup, it should be possible to contract $\Sigma^{LSU(1,1)_{(n)}}_w$ down to $w$;
in particular $\Sigma^{LSU(1,1)}_w$ should be empty unless $w$ is represented by a loop in $SU(1,1)$, e.g., $w=\left[\left(\begin{matrix}z^n&0\\0&z^{-n}\end{matrix}\right)\right]$ for some $n$.
\item[(2)] $\Sigma^{LSU(1,1)_{(0)}}_1=LSU(1,1)_{(0)}$.
\item[(3)] Each $\Sigma^{LSU(1,1)_{(n)}}_w$ should admit a relatively explicit parameterization.
\end{enumerate}
Statements (1) and (2) are definitely false; statement (3) is very elusive, if not doubtful.

\begin{proposition}$\phantom{a}$ \label{SU(1,1)theorem3}
\begin{enumerate}
\item[(a)] $\Sigma^{LSU(1,1)_{(n)}}_w$ nonempty does not imply that $w$ is represented by a loop in $SU(1,1)$.  For example, $\Sigma^{LSU(1,1)_{(1)}}_{r_1}$ is nonempty.
\item[(b)] $\Sigma^{LSU(1,1)_{(0)}}_1$ is properly contained in $LSU(1,1)_{(0)}$.
\end{enumerate}
\end{proposition}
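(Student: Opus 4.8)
The plan is to refute the naive expectations (1) and (2) by producing explicit finite-Fourier loops and computing two invariants: the connected component, read off from the winding number of the $(1,1)$-entry, and the Birkhoff stratum, read off from the position of $g\cdot H_+$ (equivalently by reducing $g$ to the normal form $l\,\mathbf{w}\,m\,a\,u$). I record the component computation once: for $g=\left(\begin{smallmatrix}A&B\\ B^\ast&A^\ast\end{smallmatrix}\right)\in L\SU(1,1)$, since $\det g\equiv 1$ forces $|A|\ge 1$ on $S^1$, the Cartan projection onto the maximal torus $\T$ is $A/|A|$, so the class of $g$ in $\pi_0\cong\mathbb Z$ is $\mathrm{wind}(A)$.

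For (a) I would take, with $|\alpha|^2-|\beta|^2=1$ and $\alpha\beta\neq 0$,
\[
g(z)=\begin{pmatrix}\alpha z&\beta\\ \bar\beta&\bar\alpha z^{-1}\end{pmatrix},
\]
which visibly lies in $L\SU(1,1)$. Scaling $\beta\mapsto t\beta$ ($t\colon 1\to 0$) while renormalizing $\alpha$ to keep $\det=1$ deforms $g$ inside $L\SU(1,1)$ to $\mathrm{diag}(\alpha z,\bar\alpha z^{-1})$ with $|\alpha|=1$, which is in $L\SU(1,1)_{(1)}$ by definition; hence so is $g$. To find the type, subtract $\tfrac{\bar\beta}{\alpha}z^{-1}$ times row $1$ from row $2$ (using $|\alpha|^2-|\beta|^2=1$ this leaves $\tfrac1\alpha z^{-1}$ in the $(2,2)$ slot), then subtract a monomial multiple of column $2$ from column $1$, then a monomial multiple of row $1$ from row $2$; all multipliers are monomials, hence holomorphic on the relevant disk and equal to $I$ at the basepoint, so the left/right factors have the required unipotent normalization at $\infty/0$. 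This reduces $g$ to the constant anti-diagonal $\left(\begin{smallmatrix}0&\beta\\ -\beta^{-1}&0\end{smallmatrix}\right)=r_1\,\mathrm{diag}(-\beta^{-1},-\beta)$, exhibiting a genuine factorization with Weyl element $w=r_1$ (the diagonal splitting into $m$ and $a$ with $a_0=|\beta|^{-1}>0$). Thus $g\in\Sigma^{L\SU(1,1)_{(1)}}_{r_1}$. Since neither $\pm\left(\begin{smallmatrix}0&-1\\ 1&0\end{smallmatrix}\right)$ lies in $\SU(1,1)$, and the only $W$-elements carrying an $\SU(1,1)$-representative are the translations $[\mathrm{diag}(z^n,z^{-n})]$, the reflection $r_1$ is not represented by a loop in $\SU(1,1)$, which is exactly the failure of expectation (1).

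For (b) I would exhibit a loop in the identity component whose block Toeplitz operator $T_g=\mathrm{pr}_+\circ g|_{H_+}$ on $H_+=H^0(\Delta)^2$ has nonzero kernel; such a $g$ cannot lie in the open cell $\Sigma_1$ (where $g\cdot H_+$ projects isomorphically onto $H_+$), so it sits in some $\Sigma_w$ with $w\neq 1$, giving proper containment. The conceptual point is the contrast with the scalar theory: a scalar symbol of winding $0$ has invertible Toeplitz operator by Coburn's lemma, whereas the $2\times2$ block Toeplitz operator of a component-$0$ loop in $\SU(1,1)$ can have index $0$ yet nontrivial kernel. Concretely I would search among finite-Fourier loops $g=\left(\begin{smallmatrix}A&B\\ B^\ast&A^\ast\end{smallmatrix}\right)$ with $\mathrm{wind}(A)=0$ (so $g\in L\SU(1,1)_{(0)}$) in which $A$ carries both positive and negative Fourier modes — so that $g$ has neither of the special forms (I.1), (II.1) of Theorem \ref{SU(1,1)theorem1'}, for which a triangular factorization is automatic — imposing $\det g\equiv 1$ (choosing $B$ by a Fej\'er--Riesz factorization of $AA^\ast-1$) together with a nonzero $v\in H_+$ solving $\mathrm{pr}_+(gv)=0$. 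Equivalently, one seeks a component-$0$ loop whose triangular factor $a_0^2$ — a ratio of block Toeplitz determinants — is forced to be $\le 0$, which is impossible in $\Sigma_1$ where $a_0>0$.

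The hard part will be part (b): producing an explicit loop on which the block Toeplitz degeneracy actually occurs and certifying that the resulting Weyl element is $\ne 1$. The difficulty is that $\ker T_g\neq 0$ is a positive-codimension condition which must be met simultaneously with $\det g\equiv 1$, $\mathrm{wind}(A)=0$, and the reality constraint $\sigma(g)=g$; because $\sigma$ interchanges $H^0(\Delta)$ and $H^0(\Delta^\ast)$ it couples the interior and exterior data, so the degeneracy cannot be read off by the naive monomial reduction that worked for (a) and must instead be forced by tuning higher-degree Fourier data (for instance products of several Example \ref{noncompact_a} factors at differing levels), after which the nontriviality of $w$ is verified from the kernel.
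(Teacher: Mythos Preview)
Your argument for part (a) is correct and in fact cleaner than the paper's. You produce a direct $r_1$-type Birkhoff factorization for an explicit loop in $LSU(1,1)_{(1)}$ by elementary row/column reduction; the paper instead works in the $(-1)$ component, reduces a general $\left(\begin{smallmatrix}z^{-1}&0\\0&z\end{smallmatrix}\right)g_0$ to the analysis of a triangular loop $\left(\begin{smallmatrix}z^{-1}&0\\c_0+c_1z&z\end{smallmatrix}\right)$, and then specializes $c_1=0$ to land in $\Sigma_{r_1}$. Your route is shorter; the paper's buys a uniform framework for all $n$ and all strata in the nonidentity components (via the Toeplitz matrices of Lemma~\ref{roughclaim}), which is useful beyond this single proposition.

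Part (b), however, is a genuine gap: you outline a search but never exhibit a loop in $LSU(1,1)_{(0)}$ with $\ker A(g)\ne 0$. This is the entire content of (b); without the example there is no proof. The paper does not ``search'' --- it makes a systematic reduction. Starting from the pointwise polar decomposition $g=\mathrm{diag}(\lambda,\lambda^{-1})\left(\begin{smallmatrix}a&b\\b^*&a\end{smallmatrix}\right)$ with $a=\sqrt{1+|b|^2}$ and $\lambda$ of degree $0$, it strips off $B^\pm$-factors using the scalar triangular factorizations of $\lambda$ and $a$, and reduces the existence of a triangular factorization of $g$ to the nonexistence of a nonzero $F\in H_+$ satisfying
\[
\bigl(e^{-2\psi_+}(F+b_2^{*}(b_2F)_-)\bigr)_+=0
\]
for suitable $b_2$ and $\psi_+$. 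This single scalar equation is then solved explicitly by $b_2=z^{-1}-1$, $e^{-2\psi_+}=e^{2z}$, $F=(1-e^{-2z})/z-1$. Note in particular that the resulting $g$ is \emph{not} of finite Fourier type (it involves $(3-z-z^{-1})^{1/2}$ and $e^{z^{-1}-z}$); your proposed restriction to $L_{fin}SU(1,1)$, combined with tuning products of Example~\ref{noncompact_a} factors, gives no mechanism for forcing the codimension-one kernel condition to meet the $SU(1,1)$ constraint simultaneously. Either carry out the polar-decomposition reduction or produce an explicit kernel element; as written, (b) is unproved.
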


To summarize, the set of loops having a root subgroup factorization is properly contained in the set of loops in the identity component which have a triangular factorization which, in turn, is a proper subset of the identity component of $LSU(1,1)$.  The first set has an explicit parametrization.  The second does not, a severe flaw, and the third is very simple topologically.  This stands in contrast to the compact case of $LSU(2)$ where there is only one connected component and every loop admitting a triangular factorization admitted a root subgroup factorization.

\subsection{Toeplitz Determinants}

The group $LSU(1,1)$ acts by bounded multiplication
operators on the Hilbert space $H:=L^2(S^1;\mathbb C^2)$. As in chapter 6 of
\cite{PS}, this defines a homomorphism of $LSU(1,1)$ into the
restricted general linear group of $H$ defined relative to the
Hardy polarization $H=H_+\oplus H_-$, where $H_+$ is the subspace
of boundary values of functions in $H^0(\Delta,\C^2)$ and $H_-$ is
the subspace of boundary values of functions in
$H^0(\Delta^*,\C^2)$. For a loop $g$, let $A(g)$ (respectively, $A_1(g)$) denote the
corresponding Toeplitz operator, i.e., the compression of
multiplication by $g$ to $H_+$ (resp., the shifted Toeplitz operator, i.e. the compression
to $H_+\ominus \mathbb C\left(\begin{matrix}0\\1\end{matrix}\right)$). It is well known that $A(g)A(g^{-1})$ and $A_1(g)A_1(g^{-1})$ are determinant class operators (i.e., of the form $1+\text{trace
class}$).

\begin{theorem}\label{Toeplitz} Suppose that $g\in
LSU(1,1)_{(0)}$ has a root subgroup factorization as in part (b) of Theorem \ref{SU(1,1)theorem2}.
Then
\[
\det(A(g)A(g^{-1}))=
\left(\prod_{i=0}^{\infty}\frac{1}{(1-\vert\eta_i\vert^2)^{i}}\right)\times
\left(\prod_{
j=1}^{\infty}e^{-2j\vert\mathbf{\chi}_j\vert^2}\right)\times
\left(\prod_{k=1}^{\infty}\frac
{1}{(1-\vert\zeta_k\vert^2)^{k}}\right),
\]
\[
\det(A_1(g)A_1(g^{-1}))=
\left(\prod_{i=0}^{\infty}\frac{1}{(1-\vert\eta_i\vert^2)^{i+1}}\right)\times
\left(\prod_{
j=1}^{\infty}e^{-2j\vert\mathbf{\chi}_j\vert^2}\right)\times
\left(\prod_{k=1}^{\infty}\frac
{1}{(1-\vert\zeta_k\vert^2)^{k-1}}\right)
\]
and if $g=lmau$ is the triangular factorization as in (\ref{trifactorization}) (with $\mathbf w=1$), then
\[
a_0^2=\frac{\det(A_1(g)A_1(g^{-1}))}{\det(A(g)A(g^{-1}))}=\frac{\prod_{k=1}^{\infty}(1-\vert\zeta_k\vert^2)}
{\prod_{i=0}^{\infty}(1-\vert\eta_i\vert^2)}.
\]
\end{theorem}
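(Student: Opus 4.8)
The plan is to start from the partial root subgroup factorization $g=\sigma(g_1^{-1})Dg_2$ supplied by Theorem \ref{SU(1,1)theorem2}, where $D=\mathrm{diag}(e^{\chi},e^{-\chi})$ and $g_1,g_2$ carry the root subgroup factorizations with parameters $(\eta_i)$ and $(\zeta_k)$. Everything hinges on a multiplicativity statement: that although $g\mapsto\det(A(g)A(g^{-1}))$ is not a homomorphism, it factorizes across this ordered product and across the individual root subgroup factors, so that, writing $f_i$ and $f'_k$ for the individual factors,
\[
\det(A(g)A(g^{-1}))=\Big(\prod_i\det(A(f_i)A(f_i^{-1}))\Big)\det(A(D)A(D^{-1}))\Big(\prod_k\det(A(f'_k)A((f'_k)^{-1}))\Big).
\]
I would prove this by induction on the number of factors, writing each Toeplitz operator in $2\times2$ block form on $H_+=H^0(\Delta,\C)\oplus H^0(\Delta,\C)$ and tracking how the Hankel cross-terms between successive factors contribute to the determinant. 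The nested degree structure (the factor of index $i$ involves only $z^{\pm i}$) together with the rapid decay of the parameters forced by smoothness keeps all relevant operators determinant class and makes the cross-terms telescope. This is the main obstacle: the Toeplitz determinant is genuinely not multiplicative in general, and isolating the precise structural reason it becomes multiplicative here — rather than merely asserting that a central cocycle vanishes — is where the real work lies.

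Granting multiplicativity, the single-factor computations are short and explicit. For a $g_2$-type factor $f'=\mathbf a(\zeta)\left(\begin{smallmatrix}1&\zeta z^{-n}\\ \bar\zeta z^{n}&1\end{smallmatrix}\right)$ one checks directly that $A(f')A((f')^{-1})$ is the identity on the first summand and $I+\tfrac{|\zeta|^2}{1-|\zeta|^2}Q_n$ on the second, where $Q_n$ is the rank-$n$ projection onto $\mathrm{span}\{1,z,\dots,z^{n-1}\}$; hence its determinant is $(1-|\zeta|^2)^{-n}$. The mirror computation for a $g_1$-type factor $\mathbf a(\eta)\left(\begin{smallmatrix}1&\bar\eta z^{i}\\ \eta z^{-i}&1\end{smallmatrix}\right)$ places the rank-$i$ projection on the first summand and gives $(1-|\eta|^2)^{-i}$. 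For the diagonal factor $D$ multiplication splits into the two scalar symbols $e^{\pm\chi}$, and since $\chi$ is purely imaginary the strong Szeg\"o limit theorem yields $\det(A(e^{\pm\chi})A(e^{\mp\chi}))=\exp(-\sum_j j|\chi_j|^2)$ for each, so $\det(A(D)A(D^{-1}))=\exp(-2\sum_j j|\chi_j|^2)$. Assembling these three inputs produces the first displayed formula.

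For $A_1$ I would run the identical per-factor computation on the shifted space $H_+\ominus\C\left(\begin{smallmatrix}0\\1\end{smallmatrix}\right)$. Deleting the constant in the second slot lowers the rank of the $\zeta$-projection by one, changing its contribution to $(1-|\zeta_k|^2)^{-(k-1)}$; the point I would check carefully is that, because the off-diagonal block couples the two summands through the truncated backward shift, this same deletion \emph{raises} the rank of the $\eta$-projection by one, giving $(1-|\eta_i|^2)^{-(i+1)}$, while the diagonal contribution is unchanged. This gives the second displayed formula. The third is then immediate for the quotient of products; to identify that quotient with $a_0^2$ I would use either the general identity that the ratio of the shifted and unshifted Toeplitz determinants of any loop equals the square of the diagonal factor in its triangular factorization (the loop-group analogue of the ratio of consecutive Toeplitz determinants for orthogonal polynomials on the circle), or a direct tracking of the positive diagonal through the partial factorization, which via $a_1=\prod_i\mathbf a(\eta_i)$ and $a_2^{-1}=\prod_k\mathbf a(\zeta_k)$ from Theorem \ref{SU(1,1)theorem1} gives $a_0=a_1a_2$ and hence $a_0^2=\prod_k(1-|\zeta_k|^2)/\prod_i(1-|\eta_i|^2)$. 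Throughout, convergence of the infinite products is guaranteed by the decay of the parameters coming from smoothness, and the determinant-class property is the one quoted above the statement.
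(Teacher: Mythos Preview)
Your approach is genuinely different from the paper's, and the place you flag as ``the main obstacle'' is exactly where the paper takes a different road. The paper never attempts to prove a direct multiplicativity statement for $g\mapsto\det(A(g)A(g^{-1}))$ across the ordered root subgroup factors. Instead it works in the central extension $\widetilde L\dot G$ and uses the identification $\vert\sigma_0\vert^2(g)=\det(A(g)A(g^{-1}))$, $\vert\sigma_1\vert^2(g)=\det(A_1(g)A_1(g^{-1}))$ (the Proposition in the ``Note on the Rank One Case'' subsection), together with the fact that for a triangular factorization $\tilde g=l\,ma\,u$ one has $ma=\prod_j\sigma_j(\tilde g)^{h_j}$. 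The multiplicativity you need then becomes the statement $a(\tilde g)=a(g_1)\,a(\exp\chi)\,a(g_2)$, which is proved in Theorem~\ref{Gsmooththeorem}(b) by an explicit computation: writing $\tilde g=\Theta(u_1^*)a_1\bigl(\Theta(l_1^*)\exp(\chi)l_2\bigr)a_2u_2$, the middle factor lies in $(\widetilde L\dot B^+)_0$, and its $a$-part is computed via the Heisenberg commutator in the extension, giving $\exp(\sum_j j\langle\chi_j,\chi_j\rangle c)$. The product formulas for $a_1,a_2$ come from the inductive calculation in Theorems~\ref{Gtheorem1} and~\ref{Gtheorem1smooth}. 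Evaluating $\Lambda_0$ and $\Lambda_1$ on the coroots $h_{\tau_k}=kc-h_{\dot\alpha}$ and $h_{\tau'_i}=ic+h_{\dot\alpha}$ then reads off the exponents $k,\,k-1,\,i,\,i+1$ directly, with no separate per-factor Toeplitz computation needed; the Remarks after Theorem~\ref{Gsmooththeorem} state explicitly that part~(b) specializes to Theorem~\ref{Toeplitz}.

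Your per-factor calculations look correct and your use of Szeg\H{o} for the diagonal piece is fine, but the inductive ``telescoping'' argument you propose for multiplicativity is where the proposal is genuinely incomplete. The root subgroup factors do not nest or commute in a way that makes the Hankel cross-terms cancel transparently at the block-operator level; what actually governs the defect from multiplicativity is the $2$-cocycle of the central extension, and showing it is trivial on this particular ordered product is exactly the content of the $\widetilde L\dot B^+$ computation above. If you want to salvage your direct approach, the cleanest fix is to import the central-extension interpretation: once you know $\det(A(g)A(g^{-1}))=\vert\sigma_0(\tilde g)\vert^2$, multiplicativity across $\sigma(g_1^{-1})\cdot D\cdot g_2$ reduces to the group-theoretic statement that the $N^-\times H\times N^+$ structure of the triangular factorization is respected by this product, which is what the paper proves.
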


When $(\eta_i)_{i=0}^\infty$ and $(\zeta_k)_{k=1}^\infty$ are the
zero sequences (the abelian case), the first formula specializes to a result of
Szego and Widom (see Theorem 7.1 of \cite{W2}). Estelle Basor
pointed out to us that this result, for $g$ as in (\ref{example0}), can be deduced from Theorem 5.1 of \cite{W1}.

\subsection{Additional Motivation}

There is potentially a useful analogue of root subgroup factorization for the group of homeomorphisms of a circle
(see \cite{P4}). This is a very complicated (negatively curved type) group, and we are currently stumbling around in
trying to understand what can go wrong. In this paper our primary contribution is perhaps to identify what can go wrong with
Birkhoff and root subgroup factorization for loops into a noncompact target. There are other variations on root subgroup
factorization as well (see \cite{BP}). 

From another point of view, it is expected that root subgroup factorization is relevant to finding Darboux coordinates for homogeneous Poisson structures on $L\dot U$ and $L\G_0$ (see \cite{Pi1}). As of this writing, this is an open question.

\subsection{Plan of the Paper} This paper is essentially a sequel to \cite{PP} and \cite{CP2}. We will refer to
the latter paper as the `finite dimensional case', and we note the differences as we go along.

Section \ref{background} is on background for finite dimensional groups (which is identical to \cite{CP2}) and loop groups.
In section \ref{noncpt1} we consider the intersection of the Birkhoff decomposition for $L\G$
with $L\G_0$. Unfortunately for loops in $\dot G_0$, there does not exist an analogue of ``block (coarse) triangular decomposition", a key feature of the finite dimensional case. Consequently there does not exist a reduction to the compact type case, as in finite dimensions. One might still naively expect that there could be
a relatively transparent way to parameterize the Birkhoff components intersected with $L\G_0$ (as in the finite dimensional case, and in the case of loops into compact groups, e.g. using root subgroup factorization). But these intersections turn out to not be so simple topologically. Most of the section is devoted to rank one examples which illustrate the basic complications.

In Section \ref{topstratum} we consider root subgroup factorization for generic loops in $\G_0$.
Our objective in this section is to prove partial analogues of Theorems 4.1, 4.2, and 5.1 of \cite{PP}, for
generic loops in the identity component of (the Kac-Moody central extension of) $L\G_0$ (when $\G_0$ is of inner type).
As in the rank one case above, all of the statements have to be severely modified. The structures of the arguments in this noncompact context are roughly the same as in \cite{PP}, but there many differences in the details (reflected in the more
complicated statements of theorems).

\subsection{Acknowledgement}

The second author thanks Hermann Flaschka, whose questions motivated us to consider loops in noncompact groups. We also thank Estelle Basor for many useful conversations.  The first author thanks the Provost's Teacher-Scholar Program at California State Polytechnic University Pomona for supporting this work.

\section{Notation and Background}\label{background}

Throughout we fix a real simple noncompact type Lie algebra which is inner, denoted $\dot \g_0$, and we let $\dot\g:=\dot \g_0^{\mathbb C}$.

\subsection{Data determined by the choice of a Cartan involution}A choice of Cartan involution $\Theta$ on $\dot\g_0$ determines a maximal compact type Lie subalgebra $\dot\k$ of $\dot \g_0$.  Let $\sigma$ denote the canonical complex conjugation on $\dot\g$ fixing $\dot\g_0$.  If we extend $\Theta$ to $\dot\g$ complex linearly, then the composition $\tau=\sigma\circ \Theta$ is a complex conjugation on $\dot\g$ fixing a compact real form $\dot\u$ of $\dot\g$.  The extended involution $\Theta$ on $\dot\g$ stabilizes $\dot\u$ and fixes $\dot\k$ inside of $\dot\u$.  Thus, $\dot\g_0\cap\dot\u=\dot\k$.  The assumption that $\g_0$ is of inner type is equivalent to the condition that
\begin{equation}\label{rank_condition}
\mathrm{rank}(\dot\g_0)=\mathrm{rank}(\dot{\k})=\mathrm{rank}(\dot\u).
\end{equation}

We write $\dot\g_0=\dot\k+\dot\p$ for the decomposition of $\dot\g_0$ into the eigenspaces of $\Theta$ on $\dot\g_0$.  Then $\dot\u=\dot\k+i\dot\p$ where multiplication by $i$ denotes the canonical complex structure on $\dot\g$, and this is the decomposition of $\dot\u$ into the eigenspaces of the extension of $\Theta$ restricted to $\dot\u$.
We will also use $\Theta$ to denote the corresponding holomorphic involution of $\dot G$, and its restrictions to $\dot G_0$ and $\dot U$, which fixes $\dot K$ in $\dot G_0$ and $\dot U$, respectively.

\subsection{Data determined by the choice of a $\Theta$-stable Cartan subalgebra and a Weyl chamber in the Inner Case}

Fix a Cartan subalgebra $\dot{\t}\subset \dot{\k}$.
Because of our rank assumption (\ref{rank_condition}), $\dot{\t}$ is a
$\Theta$-stable Cartan subalgebra of $\dot{\g}_0$ and every $\Theta$-stable Cartan subalgebra of $\dot\g_0$ is of this form.  In addition, $\dot\t$ is a
$\Theta$-stable Cartan subalgebra of $\dot{\u}$, and its
centralizer $\dot{\h}$ in $\dot{\g}$ is a $\Theta$-stable Cartan
subalgebra of $\dot{\g}$.  We write $\dot{\h}=\dot{\t}+\dot\a$, where $\dot\a=i\t$,
for the eigenspace decomposition of $\dot\h$ under $\Theta$ and let $\dot H=\exp(\dot\h)$, $\dot T=\exp(\dot\t)$, and $\dot A=\exp(\dot\a)$, respectively.

We will use $\dot W:=N_{\dot U}(\dot T)/\dot T$ as a model for the Weyl group of $(\dot\g,\dot\h)$.  The choice of a Weyl chamber $C$ in $\dot\a$ determines a choice of positive roots for the action of $\dot\h$ on $\dot\g$. Let $\dot\n^+$ denote the sum of the root spaces indexed by positive roots and $\dot\n^-$ denote the sum of the root spaces indexed by negative roots.  In this way, the choice of a $\Theta$-stable Cartan subalgebra $\dot\t$ of $\dot\g_0$ and a Weyl chamber $C$ determines a triangular decomposition
\begin{equation}\label{triangledecomp}
\dot{\g}=\dot{\n}^{-}+ \dot{\h} + \dot{\n}^{+}.
\end{equation}

Set $\dot N^\pm=\exp(\n^\pm)$.  Then $\dot B^+=\dot H\dot N^+$ and $\dot B^-=\dot N^-\dot H$ are a pair of opposite Borel subgroups of $\dot G$. A consequence of the stability of $\dot\h$ under $\sigma$ and $\tau$ is the fact that $\sigma(\dot\n^\pm)=\dot\n^\mp$ and $\tau(\dot\n^\pm)=\dot\n^\mp$.

\begin{example}\label{su(1,1)conventions}
In this paper, a special role is played by the rank 1 example of
\[
\mathfrak{su}(1,1)=\left\{\begin{pmatrix}iz & x+iy \\ x-iy & -iz\end{pmatrix}\colon x,y,z\in\R\right\}
\]
with Cartan involution given by
\[
\mathrm{Ad}_g\text{ where }g=\begin{pmatrix} i & 0 \\ 0 & -i\end{pmatrix}.
\]
The effect of this involution is to negate the off-diagonal entries.  In this case, the maximal compact subalgebra fixed by the involution is the one dimensional subalgebra $\mathfrak{s}(\mathfrak{u}(1)\times \mathfrak{u}(1))$ of diagonal matrices in $\mathfrak{su}(1,1)$, which is abelian and hence a Cartan subalgebra.  The complexification is $\mathfrak{sl}(2,\C)$ and the associated compact real form is$\mathfrak{su}(2)$.  The involution fixing $\mathfrak{su}(2)$ is $X\mapsto -X^*$ (opposite conjugate transpose).  We will work with the standard triangular decomposition
\begin{equation}\label{std_tri_decomp_sl(2,C)}
\mathfrak{sl}(2,\C)=\mathrm{span}_\C\left\{\begin{pmatrix} 0& 0 \\ 1 & 0 \end{pmatrix}\right\}+\mathrm{span}_\C\left\{\begin{pmatrix} 1 & 0 \\ 0 & -1 \end{pmatrix}\right\}+\mathrm{span}_\C\left\{\begin{pmatrix} 0 & 1 \\ 0 & 0 \end{pmatrix}\right\}.
\end{equation}
\end{example}

\subsection{Root Data}
Let $\dot\theta$ denote the highest root and normalize the Killing form so that (for the dual form) $\langle \dot\theta,\dot\theta\rangle=2$.  For each root $\dot\alpha$ let $h_{\dot\alpha}\in\a$ denote the associated coroot.
The inner type assumption, together with the $\Theta$-stability of $\dot\h$, implies that each root space $\dot\g_{\dot\alpha}$ is contained in either $\dot\k^\C$ or in $\dot\p^\C$ and thus the roots can be sorted into two types.  A root $\dot\alpha$ is of \emph{compact type} if the root space $\dot\g_{\dot\alpha}$ is a subset of $\dot\k^\C\subset\dot\g$ and of \emph{noncompact type} otherwise, i.e., when $\dot\g_{\dot\alpha}\subset\dot\p^\C$.  The following is elementary.

\begin{proposition}
For each simple positive root $\dot\gamma$ there exists a Lie algebra homomorphism $\iota_{\dot\gamma}\colon \mathfrak{sl}(2,\C)\to\dot\g$ which carries the standard triangular decomposition of $\mathfrak{sl}(2,\C)$ (\ref{std_tri_decomp_sl(2,C)}) into the triangular decomposition $\dot\g=\dot\n^-+\dot\h+\dot\n^+$ and:
\begin{enumerate}
\item[(a)] in any case $\iota_{\dot\gamma}$ restricts to a homomorphism $\iota_{\dot\gamma}\colon \mathfrak{su}(2)\to \dot\u$;
\item[(b)] when $\dot\gamma$ is of compact type then $\iota_{\dot\gamma}$ restricts to  $\iota_{\dot\gamma}\colon \mathfrak{su}(2)\to\dot\k$;
\item[(c)] when $\dot\gamma$ is of noncompact type then $\iota_{\dot\gamma}$ restricts to $\iota_{\dot\gamma}\colon \mathfrak{su}(1,1)\to \dot\g_0$.
\end{enumerate}
\end{proposition}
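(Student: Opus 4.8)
The plan is to build $\iota_{\dot\gamma}$ from a single $\sl(2,\C)$-triple attached to the simple root $\dot\gamma$, and then to extract (a), (b), (c) from the way the three conjugations $\sigma$, $\Theta$, $\tau$ interact with that triple. First I would invoke the standard construction of root vectors adapted to the compact real form: pick $e_{\dot\gamma}\in\dot\g_{\dot\gamma}$, set $e_{-\dot\gamma}:=-\tau(e_{\dot\gamma})\in\dot\g_{-\dot\gamma}$, and rescale $e_{\dot\gamma}$ by a positive real number so that $[e_{\dot\gamma},e_{-\dot\gamma}]=h_{\dot\gamma}$, the coroot of $\dot\gamma$. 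Since $[h_{\dot\gamma},e_{\pm\dot\gamma}]=\pm 2e_{\pm\dot\gamma}$ is automatic, the assignment sending $\left(\begin{smallmatrix}0&1\\0&0\end{smallmatrix}\right)$, $\left(\begin{smallmatrix}0&0\\1&0\end{smallmatrix}\right)$, $\left(\begin{smallmatrix}1&0\\0&-1\end{smallmatrix}\right)$ to $e_{\dot\gamma}$, $e_{-\dot\gamma}$, $h_{\dot\gamma}$ respectively extends to a Lie algebra homomorphism $\iota_{\dot\gamma}\colon\sl(2,\C)\to\dot\g$; since $\dot\gamma$ is a positive simple root, $e_{\dot\gamma}\in\dot\n^+$, so $\iota_{\dot\gamma}$ carries the standard decomposition (\ref{std_tri_decomp_sl(2,C)}) onto $\dot\n^-+\dot\h+\dot\n^+$. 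The delicate point here, the step I expect to require the most care, is this simultaneous normalization: one must verify that the positive real rescaling forcing $[e_{\dot\gamma},e_{-\dot\gamma}]$ to equal the coroot (and not a negative multiple of it) is compatible with $\tau(e_{\dot\gamma})=-e_{-\dot\gamma}$. This is exactly the classical compact Weyl basis computation, in which positivity of the Hermitian form $-B(\,\cdot\,,\tau\,\cdot\,)$ on $\dot\u$ (for $B$ the Killing form) supplies the correct sign.

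Next I would record the elementary interactions of the conjugations. Because $\Theta$ is the complex-linear extension of an involution of $\dot\g_0$, it commutes with $\sigma$, hence with $\tau=\sigma\Theta$; the three involutions pairwise commute, and $\sigma=\tau\Theta$. On the Cartan subalgebra $\dot\h=\dot\t+\dot\a$ (with $\dot\a=i\dot\t$), $\Theta$ acts as $+1$ throughout, while $\tau$ is $+1$ on $\dot\t$ and $-1$ on $\dot\a$; in particular $h_{\dot\gamma}\in\dot\a$ obeys $\Theta(h_{\dot\gamma})=h_{\dot\gamma}$ and $\tau(h_{\dot\gamma})=-h_{\dot\gamma}$. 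Using the antilinearity of $\tau$ together with the fact that roots are imaginary on $\dot\t$ and real on $\dot\a$, a one-line computation gives $\tau(\dot\g_{\dot\alpha})=\dot\g_{-\dot\alpha}$, refining the stated relation $\tau(\dot\n^\pm)=\dot\n^\mp$.

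For part (a) I would check that $\iota_{\dot\gamma}$ intertwines the conjugation $\tau_0\colon X\mapsto -X^*$ cutting out $\su(2)$ with the conjugation $\tau$ cutting out $\dot\u$. On the three generators this reads $\tau(e_{\dot\gamma})=-e_{-\dot\gamma}$, $\tau(h_{\dot\gamma})=-h_{\dot\gamma}$, and $\tau(e_{-\dot\gamma})=-\tau^2(e_{\dot\gamma})=-e_{\dot\gamma}$, matching $\tau_0$ on the upper, diagonal, and lower generators respectively. Thus $\iota_{\dot\gamma}$ sends the $\tau_0$-fixed set $\su(2)$ into the $\tau$-fixed set $\dot\u$, which proves (a) in all cases.

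Finally I would split according to type, using that $e_{\dot\gamma}\in\dot\k^\C$ gives $\Theta(e_{\dot\gamma})=e_{\dot\gamma}$ for $\dot\gamma$ of compact type and $e_{\dot\gamma}\in\dot\p^\C$ gives $\Theta(e_{\dot\gamma})=-e_{\dot\gamma}$ for $\dot\gamma$ of noncompact type. In the compact case, commutativity of $\Theta$ and $\tau$ gives $\Theta(e_{-\dot\gamma})=-\tau(\Theta e_{\dot\gamma})=e_{-\dot\gamma}$, and $\Theta(h_{\dot\gamma})=h_{\dot\gamma}$, so $\Theta$ fixes the entire image and $\iota_{\dot\gamma}(\sl(2,\C))\subset\dot\k^\C$; intersecting with (a) yields $\iota_{\dot\gamma}(\su(2))\subset\dot\u\cap\dot\k^\C=\dot\k$ (the last equality because $\dot\k$ is the $+1$-eigenspace of $\Theta$ in $\dot\u$), giving (b). In the noncompact case $\Theta(e_{\pm\dot\gamma})=-e_{\pm\dot\gamma}$ and $\Theta(h_{\dot\gamma})=h_{\dot\gamma}$ say precisely that $\iota_{\dot\gamma}$ intertwines the off-diagonal-negating Cartan involution $\Theta_0$ of Example \ref{su(1,1)conventions} with $\Theta$; combining this with the $\tau_0\leftrightarrow\tau$ intertwining from (a) and the factorizations $\sigma=\tau\Theta$, $\sigma_0=\tau_0\Theta_0$, I conclude that $\iota_{\dot\gamma}$ intertwines the $\su(1,1)$-conjugation $\sigma_0$ with $\sigma$, hence maps $\su(1,1)$ into the $\sigma$-fixed set $\dot\g_0$, giving (c).
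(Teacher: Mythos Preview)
Your argument is correct and complete. The paper itself gives no proof of this proposition, declaring it ``elementary'' immediately before the statement; your write-up supplies precisely the standard verification the authors had in mind, building the $\sl(2,\C)$-triple from a root vector normalized via $\tau$ and then checking compatibility with the three involutions $\tau$, $\Theta$, $\sigma$ to obtain (a), (b), (c) in turn.
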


We denote the corresponding group homomorphism by the same symbol.  Note that if $\dot\gamma$ is of noncompact type, then $\iota_{\dot\gamma}$ induces an embedding of the rank one diagram (\ref{rankonegroupdiagram}) into the finite dimensional group diagram (\ref{fdgroupdiagram}).  For each simple positive root $\dot\gamma$, we use the group homomorphism to set
\begin{equation}\label{defn_of_r_gamma}
\mathbf r_{\dot\gamma}=\iota_{\dot\gamma}\begin{pmatrix} 0 & i \\ i & 0\end{pmatrix}\in N_{\dot U}(\dot T)
\end{equation}
and obtain a specific representative for the associated simple reflection $r_{\dot\gamma}\in W=N_{\dot U}(\dot T)/\dot T$ corresponding to $\dot\gamma$.  (We will adhere to the convention of using boldface letters to denote representatives of Weyl group elements).

The coroots $h_{\dot\alpha_1},h_{\dot\alpha_2},\dots,h_{\dot\alpha_r}$ form a basis for
$\dot\a$ and the dual basis consists of the fundamental
weights $\dot{\Lambda}_1,\dot{\Lambda}_,\dots, \dot{\Lambda}_r$.
Introduce the lattices
\[
\widehat{\dot T}=\bigoplus_{1\le i\le r} \mathbb Z \dot{\Lambda}_i
\quad \text{(weight lattice)}, \quad \text{and} \quad \check{\dot
T}=\bigoplus_{1\le i\le r} \mathbb Z \dot{h}_i \qquad
\text{(coroot lattice)}.
\]

Recall that the kernel of $\exp:\dot{\mathfrak t} \to \dot T$ is $2 \pi i\check{\dot{T}}$.
Consequently there is a natural identification $\widehat{\dot T} \simeq \mathrm{Hom}(\dot T,\mathbb T)$,
where a weight $\dot{\Lambda}$ corresponds to the character $\exp(2\pi ix) \to \exp(2\pi
i\dot{\Lambda} (x))$, for $x\in \dot\a$.
Likewise, there is a natural identification $\check{\dot T} \to \mathrm{Hom}(\mathbb T,\dot T)$,
where an element $h$ of the coroot
lattice corresponds to the homomorphism $\mathbb T\to \dot
T$ given by $\exp(2\pi ix)\mapsto \exp(2\pi ixh)$, for $x\in \mathbb R$.

In addition, let
\[
\widehat{\dot{R}}=\bigoplus_{1\le i\le r} \mathbb Z \dot{\alpha}_i \quad \text{(root lattice)}, \quad
\text{and} \quad \check{\dot R}=\bigoplus_{1\le i\le r} \mathbb Z
\dot{\Theta}_i \quad \text{(coweight lattice)}.
\]
Then these lattices and bases are also in duality. The elements $\dot{\Theta}_1,\dots,\dot{\Theta}_r$ are the fundamental coweights.

The affine Weyl group for $\dot{\g}$ is the semidirect product $\dot W \propto
\check{\dot T}$. For the action of $\dot W$ on $\dot{\mathfrak
h}_{\mathbb R}$, a fundamental domain is the positive Weyl chamber $C$.  For the natural affine
action
\begin{equation}\label{affineaction}
(\dot W \propto \check{\dot T})\times \dot\a \to \dot\a
\end{equation}
a fundamental domain is the convex set
\[
C_0=\{x\in C:\dot{\theta}(x)<1\} \qquad
\text{(fundamental alcove)}.
\]

\subsection{Affine Lie Algebras}

Let $L\dot{\mathfrak g}=C^{\infty}(S^1,\dot{\mathfrak g})$, viewed
as a Lie algebra with pointwise bracket. There is a universal
central extension
\[0\to \mathbb C c\to\widetilde {L}\dot {\mathfrak g}\to L\dot {\mathfrak g}\to 0,\]
where $\widetilde {L}\dot {\mathfrak g}=L\dot
{\mathfrak g}\oplus \mathbb C c$ as a vector space, and in these coordinates
\begin{equation}\label{bracket}[X+\lambda c,Y+\lambda^{\prime} c]_{\widetilde {L}\dot {\mathfrak g}}
=[X,Y]_{L\dot {\mathfrak g}}+\frac i{2\pi}\int_{S^1}\langle
X\wedge dY\rangle c. \end{equation} The smooth completion of the
untwisted affine Kac-Moody Lie algebra corresponding to
$\dot{\mathfrak g}$ is
$$\widehat L\dot{\mathfrak g}=\mathbb C d\propto\widetilde {L}\dot {\mathfrak g}
\qquad \text{(the semidirect sum)},$$ where the derivation $d$
acts by $d(X+\lambda c)=\frac 1i\frac d{d\theta}X$, for $X\in
L\dot {\mathfrak g}$, and $[d,c]=0$.

\begin{proposition}\label{unitaryext} For both $L\dot{\mathfrak u}$ and $L\dot {\mathfrak
g_0}$, the cocycle  $(X,Y)\to \int_{S^1}\langle X\wedge dY\rangle$
is real-valued. In particular the affine extension induces a
unitary central extension
\[0\to i\mathbb R c\to\widetilde {L}\dot {\mathfrak g_0}\to L\dot {\mathfrak g_0}\to 0 \]
and a real form $\widehat L\dot{\mathfrak g_0}=i\mathbb R
d\propto\widetilde {L}\dot {\mathfrak g_0}$ for $\widehat
L\dot{\mathfrak g}$ (and similarly for unitary loops as in
\cite{PP}).
\end{proposition}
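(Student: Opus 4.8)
The plan is to prove the reality of the cocycle first, and then obtain both the unitary central extension and the real form as essentially formal consequences, the only real subtlety being the bookkeeping of the factor $i$ and of complex conjugation. Write $\omega(X,Y)=\int_{S^1}\langle X\wedge dY\rangle$ for the cocycle, where $\langle\cdot,\cdot\rangle$ is the invariant form on $\dot\g$ (a real multiple of the Killing form). The one input I would isolate at the outset is the interaction of $\langle\cdot,\cdot\rangle$ with the conjugations: since $\dot\g_0$ and $\dot\u$ are real forms of $\dot\g$ fixed by $\sigma$ and by $\tau=\sigma\circ\Theta$ respectively, the restriction of $\langle\cdot,\cdot\rangle$ to each is real-valued, and equivalently, for $\rho\in\{\sigma,\tau\}$,
\[
\langle\rho X,\rho Y\rangle=\overline{\langle X,Y\rangle},\qquad X,Y\in\dot\g.
\]
This is the standard fact that an antilinear automorphism conjugates the Killing form, seen by computing $\mathrm{tr}(\mathrm{ad}(\rho X)\,\mathrm{ad}(\rho Y))$ in a real basis of the fixed real form.

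First I would establish reality of the cocycle. If $X,Y\in L\dot\g_0$, then pointwise $X(\theta),\tfrac{dY}{d\theta}(\theta)\in\dot\g_0$, because differentiation preserves the real subspace $\dot\g_0$. Hence $\langle X\wedge dY\rangle=\langle X,\tfrac{dY}{d\theta}\rangle\,d\theta$ is a real $1$-form on $S^1$, so $\omega(X,Y)\in\R$. The identical argument with $\dot\u$ in place of $\dot\g_0$ handles $L\dot\u$, which proves the first assertion of the Proposition.

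Next I would read off the unitary central extension. Put $\widetilde L\dot\g_0:=L\dot\g_0\oplus i\R c\subset\widetilde L\dot\g$. For $X,Y\in L\dot\g_0$ and $\lambda,\lambda'\in i\R$, formula (\ref{bracket}) gives
\[
[X+\lambda c,\,Y+\lambda' c]=[X,Y]+\tfrac{i}{2\pi}\,\omega(X,Y)\,c,
\]
where $[X,Y]\in L\dot\g_0$ since $\dot\g_0$ is a Lie algebra, and $\tfrac{i}{2\pi}\omega(X,Y)\in i\R$ by the reality just proved. Thus $\widetilde L\dot\g_0$ is a real Lie subalgebra and a central extension of $L\dot\g_0$ by $i\R c$. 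The factor $i$ in (\ref{bracket}) is exactly what forces the central direction to be \emph{imaginary}; this is the content of ``unitary,'' the extension integrating to a circle ($=\U(1)$) extension rather than a line extension. For the real form, define the antilinear map
\[
\hat\sigma(\mu d+X+\lambda c)=-\bar\mu\,d+\sigma X-\bar\lambda\,c
\]
on $\widehat L\dot\g=\C d\oplus L\dot\g\oplus\C c$, with $\sigma X$ the pointwise conjugation. Its fixed-point set is exactly $i\R d\oplus L\dot\g_0\oplus i\R c=\widehat L\dot\g_0$, so it remains to check that $\hat\sigma$ is an automorphism. For the central part, matching coefficients of $c$ reduces this to $\omega(\sigma X,\sigma Y)=\overline{\omega(X,Y)}$, which follows from the conjugation identity above together with $\tfrac{d}{d\theta}(\sigma Y)=\sigma\big(\tfrac{dY}{d\theta}\big)$; for the derivation part one checks directly, using $d(X+\lambda c)=\tfrac1i\tfrac{d}{d\theta}X$, that the sign from $\mu\mapsto-\bar\mu$ and the sign from antilinearity hitting $\tfrac1i$ cancel. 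Replacing $\sigma$ by $\tau$ gives the unitary-loop statement as in \cite{PP}.

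I expect the only genuinely delicate point to be the sign-and-$i$ bookkeeping in the last step: one must confirm that with the central direction taken to be $i\R c$ (not $\R c$) and with the factor $\tfrac{i}{2\pi}$ present in (\ref{bracket}), the relation $\omega(\sigma X,\sigma Y)=\overline{\omega(X,Y)}$ is precisely what makes $\hat\sigma$ multiplicative. Everything else is a routine consequence of the reality established at the start.
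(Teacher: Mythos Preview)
The paper states this proposition without proof, evidently regarding it as routine. Your argument is correct and supplies precisely the details one would expect: the cocycle is real because the Killing form restricts to a real-valued form on any real form of $\dot\g$, and pointwise differentiation preserves $L\dot\g_0$ and $L\dot\u$; the closure of $L\dot\g_0\oplus i\R c$ under the bracket (\ref{bracket}) then follows immediately, as does the analogous statement with the derivation adjoined. Your explicit construction of the conjugation $\hat\sigma$ on $\widehat L\dot\g$ and the verification that it is an antilinear automorphism with fixed-point set $i\R d\oplus L\dot\g_0\oplus i\R c$ goes somewhat beyond what the paper asks for, but it is the cleanest way to justify the phrase ``real form'' rather than merely ``real subalgebra,'' and your sign-and-$i$ bookkeeping is handled correctly.
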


We identify $\dot {\mathfrak g}$ with the constant loops in $L\dot
{\mathfrak g}$. Because the extension is trivial over $\dot
{\mathfrak g}$, there are embeddings of Lie algebras
\[\dot {\mathfrak g}\to\widetilde {L}\dot {\mathfrak g}\to \widehat L\dot{\mathfrak g}. \]

There are triangular decompositions
\begin{equation}\label{looptriangulardecomposition}\widetilde L\dot{\mathfrak
g} =\mathfrak n^{-}+ \mathfrak h +\mathfrak n^{ +}
\quad \text{and} \quad \widehat L\dot{\mathfrak g} =\mathfrak
n^{-}+ (\mathbb C d+\mathfrak h) +\mathfrak n^{ +},
\end{equation} where $\mathfrak h=\dot {\mathfrak h}+\mathbb C c$
and $\mathfrak n^{\pm}$ is the smooth completion of
$\dot{\mathfrak n}^{\pm} +\dot{\mathfrak g}(z^{\pm 1}\mathbb
C[z^{\pm 1}])$, respectively. The simple roots for $(\widehat
L_{fin}\dot{\mathfrak g},\mathbb C d+\mathfrak h)$ are
$\{\alpha_j: 0\le j\le r\}$, where
\[\alpha_0=d^*-\dot{\theta} ,\quad\alpha_j=\dot{\alpha}_j,\quad j>0, \]
$d^*(d)=1$, $d^*(c)=0$, $d^* (\dot {\mathfrak h})=0$, and the
$\dot{\alpha}_j$ are extended to $\mathbb C d+\mathfrak h$ by
requiring $\dot{\alpha}_ j(c)=\dot{\alpha}_ j(d)=0$. The simple
coroots are $\{h_j:0\le j\le r\}$, where
\[h_0=c-\dot {h}_{\dot{\theta}},\quad h_j=\dot {h}_j,\quad j>0.\]
For $i>0$, the root homomorphism $\iota_{\alpha_i}$ is
$\iota_{\dot{\alpha}_i}$ followed by the inclusion $\dot {\mathfrak
g}\subset \widetilde L \dot{\mathfrak g}$. For $i=0$
\begin{equation}\label{roothom}
\iota_{\alpha_0}(\begin{pmatrix} 0&0\\
1&0\end{pmatrix})=e_{\dot{\theta}}z^{-1},\quad
\iota_{\alpha_0}(\begin{pmatrix}
0&1\\
0&0\end{pmatrix})=f_{\dot{\theta}}z,
\end{equation}
where
$\{f_{\dot{\theta}},\dot {h}_{\dot{\theta}},e_{\dot{\theta}}\}$
satisfy the $\sl(2,\mathbb C )$-commutation relations, and
$e_{\dot{\theta}}$ is a highest root vector for $\dot {\mathfrak
g}$. The fundamental dominant integral functionals on $\mathfrak
h$ are $\Lambda_j$, $j=0,..,r$.

We set $\mathfrak t=i\mathbb R c \oplus \dot{\mathfrak t}$ and
$\mathfrak a=\mathfrak h_{\mathbb R}=\mathbb R c \oplus
\dot{\mathfrak h}_{\mathbb R}$.

\subsection{Loop Groups and Extensions}\label{loopgroups}

Let $\Pi:\widetilde L \dot G \to L \dot G$ ($\Pi:\widetilde L \dot
G_0 \to L \dot G_0$) denote the universal central $\mathbb C^*$
(resp., $\mathbb T$) extension of the smooth loop group $L \dot G$ (resp. $L
\dot G_0$).

\begin{proposition} $\Pi$ induces a central circle extension
$$1\to \mathbb T \to \widetilde L \dot G_0 \to L \dot G_0 \to 1$$
(and similarly for unitary loops as in \cite{PP}).
\end{proposition}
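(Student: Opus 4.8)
The plan is to obtain $\widetilde L\dot G_0$ as the $\sigma$-real form of the universal $\C^*$-extension $\Pi\colon \widetilde L\dot G\to L\dot G$ restricted over $L\dot G_0$, so that the argument runs parallel to the unitary case of \cite{PP}; the one genuinely new input is the reality statement of Proposition \ref{unitaryext}. Write $\sigma$ for the conjugation of $\dot\g$ fixing $\dot\g_0$, extended pointwise to $L\dot\g$. First I would promote $\sigma$ to an anti-linear involution of $\widetilde L\dot\g=L\dot\g\oplus\C c$ by setting $\sigma(c)=-c$. Proposition \ref{unitaryext} is exactly what makes this a Lie algebra automorphism: since $\int_{S^1}\langle X\wedge dY\rangle$ is real for $X,Y\in L\dot\g_0$, the cocycle $\tfrac{i}{2\pi}\int_{S^1}\langle X\wedge dY\rangle$ is negated by the anti-linear $\sigma$, which is compensated by $\sigma(c)=-c$; its fixed-point real form is precisely $\widetilde L\dot\g_0$, with center $i\R c$.

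Second, I would integrate $\sigma$ to an anti-holomorphic involution $\tilde\sigma$ of the group $\widetilde L\dot G$ covering the pointwise conjugation on $L\dot G$. Such a lift exists and is unique once we require its differential to be the algebra involution above, because $\sigma$ is already defined pointwise on $L\dot G$ and the fiber $\ker\Pi\cong\C^*$ is connected; this is the standard integration step for these extensions. The crux is then to compute the action of $\tilde\sigma$ on the center. Parametrizing $\ker\Pi$ by $w=e^{\lambda}\leftrightarrow\exp(\lambda c)$ and using anti-linearity together with $\sigma(c)=-c$, one finds $\tilde\sigma(\exp(\lambda c))=\exp(-\bar\lambda c)$, i.e. $\tilde\sigma$ acts by $w\mapsto\bar w^{-1}$, whose fixed-point subgroup is exactly $\mathbb T=\{|w|=1\}$.

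Third, I would set $\widetilde L\dot G_0$ to be the $\tilde\sigma$-fixed subgroup lying over $L\dot G_0$ and verify exactness. By the center computation the kernel of $\Pi$ on the fixed set is $\mathbb T$, which gives the inclusion $\mathbb T\hookrightarrow\widetilde L\dot G_0$ and centrality. For surjectivity of $\Pi$ onto $L\dot G_0$ I would argue as follows: given $g\in L\dot G_0$ and any lift $\tilde g$, the element $z=\tilde\sigma(\tilde g)\tilde g^{-1}$ is central, and applying $\tilde\sigma$ once more gives $\tilde\sigma(z)=z^{-1}$; comparing with the center action $\tilde\sigma(z)=\bar z^{-1}$ forces $z\in\R^*$. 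Replacing $\tilde g$ by $w\tilde g$ scales $z$ by $|w|^{-2}>0$, so $\mathrm{sign}(z)\in\{\pm1\}$ depends only on $g$ and is multiplicative, defining a continuous homomorphism $L\dot G_0\to\{\pm1\}$; a $\tilde\sigma$-fixed lift of $g$ exists precisely when this sign is $+1$, in which case one takes $w=\sqrt z$.

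The main obstacle is this last sign homomorphism: one must show it is trivial, so that $\Pi$ is genuinely onto $L\dot G_0$ and the extension is by the full circle rather than an $\R^*$- or double-cover phenomenon over some component. On the identity component $L\dot G_{0,(0)}$ triviality is automatic, since $\{\pm1\}$ is discrete, and this already yields the asserted circle extension over the component that concerns us throughout the paper; globally it is handled exactly as for $L\dot U$ in \cite{PP}. Finally, smoothness, local triviality, and the identification of the Lie algebra of $\widetilde L\dot G_0$ with $\widetilde L\dot\g_0$ are inherited by restriction from $\widetilde L\dot G\to L\dot G$ together with Proposition \ref{unitaryext}, completing the exact sequence $1\to\mathbb T\to\widetilde L\dot G_0\to L\dot G_0\to1$; the unitary case is identical with $\sigma$ replaced by $\tau$.
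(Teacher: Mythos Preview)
Your argument is correct and follows the same route as the paper, which simply records ``This follows from Proposition \ref{unitaryext}.'' You have spelled out what that one-line proof leaves implicit: lifting $\sigma$ to $\widetilde L\dot\g$ via $\sigma(c)=-c$, integrating to an anti-holomorphic involution with $w\mapsto\bar w^{-1}$ on the center, and handling surjectivity through the sign obstruction, all of which the paper suppresses.
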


\begin{proof} This follows from Proposition \ref{unitaryext}.
\end{proof}

Let $N^{\pm}$ denote the subgroups
corresponding to $\mathfrak n^{\pm}$. Since the restriction of
$\Pi$ to $N^{\pm}$ is an isomorphism, we will always identify
$N^{\pm}$ with its image, e.g. $l\in N^+$ is identified with a
smooth loop having a holomorphic extension to $\Delta$ satisfying
$l(0)\in \dot N^+$. Also, set $T=\exp(\mathfrak t)$ and
$A=\exp(\mathfrak a)$.

As in the finite dimensional case, for $\widetilde {g}\in
N^{-}\cdot T A\cdot N^{+}\subset\widetilde {L}\dot G$, there is a
unique triangular decomposition
\begin{equation}\label{diagonal}\widetilde {g}=l\cdot d \cdot u,\quad \text{where}\quad d
=ma=\prod_{j=0}^r\sigma_j(\widetilde { g})^{h_j},\end{equation}
and $\sigma_j=\sigma_{\Lambda_j}$ is the fundamental matrix
coefficient for the highest weight vector corresponding to
$\Lambda_j$. If $\Pi(\widetilde {g})=g$, then because
$\sigma_0^{h_0}=\sigma_0^{c-\dot {h}_{\dot{\theta}}}$ projects to
$\sigma_ 0^{-\dot {h}_{\dot{\theta}}}$, $g=l\cdot \Pi(d)\cdot u$,
where
\begin{equation}\label{loopdiagonal}\Pi(d)(g)=
\sigma_0(\widetilde {g})^{-\dot
{h}_{\dot{\theta}}}\prod_{j=1}^r\sigma_j(\widetilde {g})^{\dot
{h}_j}=\prod_{j=1}^r\left (\frac {\sigma_j(\widetilde
{g})}{\sigma_0(\widetilde {g})^{\check {a}_j}}\right )^{\dot
{h}_j},
\end{equation} and the $\check {a}_j$ are positive integers such
that $\dot {h}_{\dot{\theta}}=\sum\check {a}_j\dot {h}_j$ (these
numbers are also compiled in Section 1.1 of \cite{KW}).

\begin{proposition}\label{notationlemma}$\phantom{a}$
\begin{enumerate}
\item[(a)] $N^{\pm}$ are stable with respect to $\Theta$, whereas
$N^{\pm}$ are interchanged by $(\cdot)^{*}$. If $\tilde{g}$ has
triangular factorization $\tilde{g}=l\cdot
m(\tilde{g})a(\tilde{g})\cdot u$ as in (\ref{diagonal}), then
$$\Theta(\tilde{g})=\Theta(l)\cdot m(\tilde{g})a(\tilde{g})\cdot
\Theta(u)$$ and
$$\tilde{g}^*=u^*\cdot m(\tilde{g})^*a(\tilde{g})\cdot
l^*$$ are triangular factorizations.

\item[(b)] If $\widetilde{g} \in \widetilde{L}\dot G$, then
$\sigma_j(\Theta(\tilde{g}))=\sigma_j(\tilde{g})$ and
$\sigma_j(\tilde{g}^*)=\sigma_j(\tilde{g})^*$.

\item[(c)] If $\widetilde{g} \in \widetilde{L}\dot G_0$, then
$\vert\sigma_j(\widetilde{g})\vert$ depends only on
$g=\Pi(\widetilde g)$, and
\begin{equation}\label{formula_for_sigma_j}
\vert\sigma_j\vert(g):=\vert\sigma_j(\widetilde{g})\vert
=(\sigma_j(\widetilde{g})\sigma_j(\widetilde{g}^{-1}))^{1/2}.
\end{equation}

\item[(d)] For $\widetilde g\in \widetilde L\dot G_0$ and
$g=\Pi(\widetilde g)$, $\widetilde g$ has a triangular
factorization if and only if $g$ has a triangular factorization.
The restriction of the projection $\widetilde{L}\dot G_0 \to L\dot
G_0$ to elements with $m(\widetilde g)=1$ is injective.
\end{enumerate}
\end{proposition}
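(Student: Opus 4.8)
The plan is to prove the four assertions in sequence, leaning on the uniqueness of the triangular factorization (\ref{diagonal}) and on the interplay of the three commuting (anti-)involutions $\sigma,\tau,\Theta$ with the operation $(\cdot)^*$; note $\Theta=\sigma\circ\tau$, since $\tau=\sigma\circ\Theta$ and $\sigma^2=\mathrm{id}$. For (a) I would first check that $\Theta$ stabilizes $N^{\pm}$ while $(\cdot)^*$ interchanges them. Both operations act on $\widetilde L\dot G$ pointwise in $z$ (the star also composing with $z\mapsto 1/\bar z$) and respect the grading by powers of $z$. From $\Theta=\sigma\circ\tau$ and the identities $\sigma(\dot{\mathfrak n}^{\pm})=\dot{\mathfrak n}^{\mp}=\tau(\dot{\mathfrak n}^{\pm})$ recorded before the statement, $\Theta(\dot{\mathfrak n}^{\pm})=\dot{\mathfrak n}^{\pm}$; since $\Theta$ commutes with multiplication by $z$, it stabilizes $\mathfrak n^{\pm}$ and hence $N^{\pm}$. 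The Lie-algebra relation $X^*=-\tau(X)$ of Example \ref{su(1,1)conventions} turns $\tau(\dot{\mathfrak n}^{\pm})=\dot{\mathfrak n}^{\mp}$ into $(\dot{\mathfrak n}^{\pm})^*=\dot{\mathfrak n}^{\mp}$, and as $(\cdot)^*$ sends $z^k\mapsto z^{-k}$ it interchanges $\mathfrak n^{+}$ and $\mathfrak n^{-}$, hence $N^{+}$ and $N^{-}$. Given $\tilde g=l\cdot ma\cdot u$, applying $\Theta$ termwise yields $\Theta(l)\in N^{-}$, $\Theta(u)\in N^{+}$ with the correct normalizations at $\infty$ and $0$, while $\Theta$ fixes the diagonal factor $ma$ (it fixes $\mathfrak h$ and the central direction); applying $(\cdot)^*$ yields $u^*\in N^{-}$, $l^*\in N^{+}$ and the diagonal factor $(ma)^*$. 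Uniqueness of the triangular factorization then identifies these as \emph{the} factorizations, proving (a).

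For (b) I would realize $\sigma_j$ as the matrix coefficient $\sigma_j(\tilde g)=\langle\tilde g\,v_j,v_j\rangle$ on the integrable highest-weight module $L(\Lambda_j)$, with $v_j$ a highest-weight vector and $\langle\cdot,\cdot\rangle$ the contravariant Hermitian form normalized by $\langle v_j,v_j\rangle=1$. Because $\dot G_0$ is of inner type, $\Theta$ fixes the Cartan and every simple root, so it lifts to an intertwiner $\hat\Theta$ of $L(\Lambda_j)$ that is unitary for the contravariant form and fixes $v_j$; hence $\sigma_j(\Theta(\tilde g))=\langle\hat\Theta\tilde g\hat\Theta^{-1}v_j,v_j\rangle=\langle\tilde g\,v_j,v_j\rangle=\sigma_j(\tilde g)$. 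Since $(\cdot)^*$ is, by construction of the form, the adjoint of the action, $\sigma_j(\tilde g^*)=\langle\tilde g^*v_j,v_j\rangle=\langle v_j,\tilde g\,v_j\rangle=\overline{\sigma_j(\tilde g)}=\sigma_j(\tilde g)^*$. (Alternatively, on the dense set of loops with a triangular factorization, (b) drops out of (a) by reading $\sigma_j$ off the diagonal factor using the linear independence of the $h_j$, and then extends by continuity.)

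For (c), the descent of $|\sigma_j|$ is immediate: $\ker\Pi=\mathbb T$ acts on $L(\Lambda_j)$ by a unit-modulus character, so $\sigma_j(\lambda\tilde g)=\lambda^{m_j}\sigma_j(\tilde g)$ with $|\lambda|=1$ leaves $|\sigma_j(\tilde g)|$ unchanged. For (\ref{formula_for_sigma_j}) I would first establish the group-level identity $g^*=\Theta(g^{-1})$ for $g\in\dot G_0$: there $\sigma(g)=g$, the group form of $X^*=-\tau(X)$ is $\tau(g)=(g^*)^{-1}$, and $\Theta=\sigma\circ\tau$ gives $\sigma(\Theta(g))=(g^*)^{-1}$; as $\Theta(g)\in\dot G_0$ is $\sigma$-fixed the left side is $\Theta(g)$, so $\Theta(g)=(g^*)^{-1}$ (one checks this against Example \ref{su(1,1)conventions}, where $\Theta$ negates the off-diagonal entries). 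Lifting to $\widetilde L\dot G_0$ gives $\tilde g^*=\Theta(\tilde g^{-1})$, whence by (b)
\[
\sigma_j(\tilde g^{-1})=\sigma_j(\Theta(\tilde g^{-1}))=\sigma_j(\tilde g^*)=\overline{\sigma_j(\tilde g)},
\]
so $\sigma_j(\tilde g)\sigma_j(\tilde g^{-1})=|\sigma_j(\tilde g)|^2$, a manifestly lift-independent quantity whose positive square root is (\ref{formula_for_sigma_j}).

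For (d), the equivalence of triangular factorizations holds because the extension is trivial over $N^{\pm}$ and $\ker\Pi\subset TA$: projecting $\tilde g=l\cdot ma\cdot u$ gives a triangular factorization of $g$ via (\ref{loopdiagonal}), and conversely a triangular factorization $g=l\cdot\Pi(d)\cdot u$ lifts by taking the canonical lifts $\tilde l,\tilde u\in N^{\pm}$ and setting $\tilde d=\tilde l^{-1}\tilde g\,\tilde u^{-1}$, which projects into the diagonal subgroup and therefore lies in $TA$. For injectivity of $\Pi$ on $\{\,m(\tilde g)=1\,\}$, two lifts of a given $g$ differ by $\lambda\in\mathbb T=\ker\Pi$; since $\lambda$ has unit modulus it alters only the torus part $m$ (not $a$), entering $m$ faithfully through $\sigma_0$ at level one, so $m(\tilde g)=m(\lambda\tilde g)=1$ forces $\lambda=1$. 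I expect the \emph{main obstacle} to be the lift in (c) of the group identity $g^*=\Theta(g^{-1})$ to the central extension, i.e. verifying that $\Theta$, inversion, and $(\cdot)^*$ are defined on $\widetilde L\dot G_0$ so that $\tilde g^*=\Theta(\tilde g^{-1})$ holds exactly, and not merely modulo $\ker\Pi$; the exact formula (\ref{formula_for_sigma_j}) depends on this. Everything else reduces to uniqueness of triangular factorization, the contravariant form, and the unit-modulus action of the central circle.
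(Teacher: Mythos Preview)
Your argument is correct and is essentially a careful elaboration of the paper's very terse proof, which simply says that (a) and (b) follow from the compatibility of the triangular factorization with $\Theta$ and the compact real form, that the first part of (c) follows from the unitarity of the extension $\widetilde L\dot G_0$, and that the formula in (c) follows from the level action $\sigma_j(\widetilde g\lambda)=\lambda^{l}\sigma_j(\widetilde g)$ for $\lambda\in\mathbb T$; the paper gives no separate argument for (d).

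The main place where you go beyond the paper is your treatment of (b) and of the formula in (c). For (b) you work directly with the contravariant Hermitian form on $L(\Lambda_j)$ and an explicit lift $\hat\Theta$; the paper suppresses this and would presumably be content with your alternative argument, namely reading $\sigma_j$ off the diagonal factor via (a) on the dense set admitting a triangular factorization and then extending by continuity. For the formula in (c), your route through the group identity $g^*=\Theta(g^{-1})$ on $\dot G_0$ and its lift $\tilde g^*=\Theta(\tilde g^{-1})$, combined with (b) to get $\sigma_j(\tilde g^{-1})=\overline{\sigma_j(\tilde g)}$, is exactly the step the paper's one-line hint is pointing at but does not spell out. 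You are right that the only genuinely delicate point is whether $\tilde g^*=\Theta(\tilde g^{-1})$ holds on the nose in $\widetilde L\dot G_0$ rather than merely modulo $\ker\Pi$; once one fixes the natural extensions of $\Theta$ and $(\cdot)^*$ to the central extension (both acting trivially on the center, as follows from Proposition~\ref{unitaryext}), this identity does lift exactly, and your argument goes through. Your proof of (d) is the expected one and fills a gap the paper leaves implicit.
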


\begin{proof} (a) and (b) follow from the compatibility of the
triangular factorization with respect to $\Theta$ and $\u$.

The first part of (c) follows from the fact that the induced
extension $\widetilde{L}\dot G_0$ is unitary. The formula \ref{formula_for_sigma_j}
in (c) follows from the fact that if $\lambda\in \mathbb T$ ,then
$$\sigma_j(\widetilde{g}\lambda)=\lambda^l\sigma_j(\widetilde{g})$$
where $l$ is the level.
\end{proof}

\subsection{A Note on the Rank One Case}

In this subsection we will freely use the notation in Section 1 of
\cite{P3} and \cite{PS} (as in section 1 of \cite{P3}, we denote
the Toeplitz and shifted Toeplitz operators by $A$ and $A_1$,
respectively).

In the rank one case $\sigma_0$ and $\sigma_1$ can be concretely
realized as ``regularized Toeplitz determinants." In the notation
of section 6.6 of \cite{PS}, a concrete model for the central
extension is
$$\widetilde{L}\dot
G=\{[g,q]:(g,q)\in L\dot G\times GL(H_+), A(g)q^{-1}=1+\text{trace
class}\}$$ (here $\dot{G}=SL(2,\mathbb C)$, $H=L^2(S^1,\mathbb
C^2)$, and $H_+$ is the subspace of boundary values of holomorphic
functions on the disk). In this realization
$$\sigma_0([g,q])=\det(A(g)q^{-1})$$

\begin{proposition}For $g\in L\dot{G}_0$, using the notation in Proposition \ref{notationlemma},
$$\vert\sigma_0\vert^2(g)=\det(A(g)A(g^{-1}))\text{ and }\vert\sigma_1\vert^2(g)\det(A_1(g)A_1(g^{-1})).$$
\end{proposition}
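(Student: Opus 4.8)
The plan is to feed the concrete determinantal realization of $\sigma_0$ recorded above into the lift-independent identity (\ref{formula_for_sigma_j}) of Proposition \ref{notationlemma}(c), namely $\vert\sigma_0\vert^2(g)=\sigma_0(\widetilde g)\sigma_0(\widetilde g^{-1})$, and then to cancel the auxiliary regularizing factor using standard properties of the Fredholm determinant. Concretely, I would fix any lift $\widetilde g=[g,q]$ of $g$ in the model $\widetilde L\dot G=\{[g,q]:A(g)q^{-1}=1+\text{trace class}\}$. Since the product in the extension is $[g_1,q_1][g_2,q_2]=[g_1g_2,q_1q_2]$, the inverse is $\widetilde g^{-1}=[g^{-1},q^{-1}]$, and hence
\[
\sigma_0(\widetilde g)=\det(A(g)q^{-1}),\qquad \sigma_0(\widetilde g^{-1})=\det(A(g^{-1})q).
\]
Both operators are of the form $1+\text{trace class}$: the first by the defining condition for $\widetilde g$, the second by the same condition applied to $\widetilde g^{-1}$.

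The heart of the argument is the cancellation of $q$. As $q\in GL(H_+)$ is bounded with bounded inverse, the operator $qA(g^{-1})=q\,(A(g^{-1})q)\,q^{-1}$ is a conjugate of $A(g^{-1})q$, hence again $1+\text{trace class}$ and with the same Fredholm determinant. Multiplicativity of the determinant on $1+\text{trace class}$ operators then gives
\[
\sigma_0(\widetilde g)\sigma_0(\widetilde g^{-1})=\det(A(g)q^{-1})\det(qA(g^{-1}))=\det\big(A(g)q^{-1}q\,A(g^{-1})\big)=\det(A(g)A(g^{-1})),
\]
and the right-hand side is genuinely determinant class, as noted in the text. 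Together with (\ref{formula_for_sigma_j}) this yields $\vert\sigma_0\vert^2(g)=\det(A(g)A(g^{-1}))$.

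For $\sigma_1$ I would run the identical computation in the realization attached to the second fundamental weight $\Lambda_1$, where the relevant determinants are taken with respect to the shifted polarization $H_+\ominus\mathbb C\begin{pmatrix}0\\1\end{pmatrix}$. Writing $\sigma_1([g,q_1])=\det(A_1(g)q_1^{-1})$ for the corresponding regularizing factor $q_1$, the same conjugation-plus-multiplicativity manipulation produces $\vert\sigma_1\vert^2(g)=\det(A_1(g)A_1(g^{-1}))$.

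The step I expect to require the most care is not the determinant bookkeeping, which is routine once multiplicativity and conjugation invariance of the Fredholm determinant are in hand, but rather pinning down the determinantal realization of $\sigma_1$. One must verify that the highest weight matrix coefficient for $\Lambda_1$ is computed by the shifted Toeplitz operator $A_1$, i.e. that passing from $\Lambda_0$ to $\Lambda_1$ amounts to shifting the vacuum by the single vector $\begin{pmatrix}0\\1\end{pmatrix}$, and that both the defining trace-class condition and the description $\widetilde g^{-1}=[g^{-1},q_1^{-1}]$ of the inverse carry over verbatim to this shifted model. Once this identification is secured, the argument for $\sigma_1$ is word-for-word the same as for $\sigma_0$.
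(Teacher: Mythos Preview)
Your proposal is correct and follows exactly the route the paper takes: the paper's proof is the single sentence ``This follows from (c) of Proposition \ref{notationlemma},'' and your argument is simply a careful unpacking of that sentence, using the determinantal model $\sigma_0([g,q])=\det(A(g)q^{-1})$ together with multiplicativity and conjugation invariance of the Fredholm determinant to cancel $q$. Your caveat about the $\sigma_1$ case (identifying the $\Lambda_1$ matrix coefficient with the shifted Toeplitz determinant) is a fair point, but the paper treats this as understood and does not elaborate.
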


\begin{proof}This follows from (c) of Proposition \ref{notationlemma}. \end{proof}

\subsection{Reduced Sequences in the Affine Weyl
Group}\label{Weylgroup}

The Weyl group $W$ for $(\widehat L\dot{\mathfrak g},\mathbb
Cd+\mathfrak h)$ acts by isometries of $(\mathbb R d+\mathfrak
h_{\mathbb R},\langle \cdot,\cdot \rangle)$. The action of $W$ on
$\mathbb R c$ is trivial. The affine plane $d+\dot{\mathfrak h}_{\mathbb R}$
is $W$-stable, and this action identifies $W$ with the affine Weyl
group $\dot W \propto \check{\dot T}$ and its affine action (\ref{affineaction}) on $\dot{\mathfrak h}_{\mathbb R}$
(see Chapter 5 of \cite{PS}). In this realization
\begin{equation}\label{r0eqn}r_{\alpha_0}= \dot h_{\dot{\theta}}\circ r_{\dot{\theta}}
,\quad \text{and}\quad r_{\alpha_i}=r_{\dot{\alpha_i}},\quad
i>0.\end{equation}

In general, given $w\in W$, we write
$$w=h_w\circ \dot w, \quad h\in\ \check{\dot T},\quad \dot w\in \dot W$$
and we let $Inv(w)$ denote the inversion set of $w$, i.e. the set
of positive roots which are mapped to negative roots by $w$.

\begin{definition}\label{minimal} A sequence of simple reflections
$r_1,r_2,..$ in $W$ is called reduced if $w_n=r_nr_{n-1}..r_1$ is
a reduced expression for each $n$.
\end{definition}

\begin{lemma}\label{rootlemma}Given a reduced sequence of simple
reflections $\{r_j\}$, corresponding to simple positive roots
$\gamma_j$,
\begin{enumerate}
\item[(a)] the inversion set
$$Inv(w_n)=\{\tau_j=w_{j-1}^{-1}\cdot\gamma_j=r_1..r_{j-1}\cdot\gamma_j,\quad j=1,..,n\}$$
\item[(b)] $w_{k}\tau_n>0$, $k<n$.
\end{enumerate}
\end{lemma}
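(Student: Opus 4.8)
The plan is to prove (a) by induction on $n$ via a cocycle-type recursion for inversion sets, and then to deduce (b) as a purely formal consequence of (a). Throughout I would use the length function $\ell$ on $W$ and the standard facts for the affine Weyl group (Chapter 5 of \cite{PS}): a simple reflection $r_\gamma$ carries $\gamma\mapsto -\gamma$ and permutes the remaining positive real roots, so that for every $w\in W$ one has $\ell(r_\gamma w)=\ell(w)+1$ exactly when $w^{-1}\gamma>0$. Since each $\tau_j=w_{j-1}^{-1}\gamma_j$ is the $W$-image of a simple (hence real) root, every $\tau_j$ is a real root and is therefore unambiguously positive or negative; this is what allows the argument to run on the positive/negative dichotomy without interference from imaginary roots.

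First I would establish the recursion: if $r=r_\gamma$ is a simple reflection with $\ell(rw)>\ell(w)$, then
\[
Inv(rw)=Inv(w)\sqcup\{w^{-1}\gamma\}.
\]
To see this, take a positive root $\alpha$, set $\beta=w\alpha$, and note $rw\alpha=r\beta<0$ precisely when either $\beta=\gamma$ or $\beta<0$ with $\beta\neq-\gamma$. The first alternative gives the single new element $\alpha=w^{-1}\gamma$, which is positive by the length hypothesis $w^{-1}\gamma>0$; the second alternative recovers exactly $Inv(w)$, since for positive $\alpha$ the exclusion $\beta\neq-\gamma$ (i.e. $\alpha\neq -w^{-1}\gamma$) is automatic. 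The two pieces are disjoint because $w(w^{-1}\gamma)=\gamma>0$ shows $w^{-1}\gamma\notin Inv(w)$.

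Part (a) is then immediate by induction. Because the sequence is reduced, $\ell(w_n)=\ell(w_{n-1})+1$, so applying the recursion with $r=r_n$, $w=w_{n-1}$, $\gamma=\gamma_n$ gives $Inv(w_n)=Inv(w_{n-1})\sqcup\{w_{n-1}^{-1}\gamma_n\}=Inv(w_{n-1})\sqcup\{\tau_n\}$. Combined with the inductive hypothesis $Inv(w_{n-1})=\{\tau_1,\dots,\tau_{n-1}\}$ this yields $Inv(w_n)=\{\tau_1,\dots,\tau_n\}$; the base case $n=1$ is just $Inv(r_1)=\{\gamma_1\}=\{\tau_1\}$. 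Since $|Inv(w_n)|=\ell(w_n)=n$, the listed roots $\tau_1,\dots,\tau_n$ are automatically distinct. For (b), fix $k$ with $1\le k<n$ (the case $k=0$ being trivial). The root $\tau_n$ lies in $Inv(w_n)$ and so is positive, while by distinctness $\tau_n\notin\{\tau_1,\dots,\tau_k\}=Inv(w_k)$; by the very definition of the inversion set, $\tau_n>0$ together with $\tau_n\notin Inv(w_k)$ forces $w_k\tau_n>0$.

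The only real care needed — which I would flag as the main point of potential error rather than a deep obstacle — is the bookkeeping forced by the convention $w_n=r_nr_{n-1}\cdots r_1$ (left multiplication) together with $\tau_j=w_{j-1}^{-1}\gamma_j$: one must check that the recursion is applied on the correct side and that the new inversion produced is \emph{exactly} $w_{j-1}^{-1}\gamma_j$, rather than an $r_j$-reflected or sign-flipped variant. A secondary point is to confirm that the length criterion $\ell(r_\gamma w)=\ell(w)+1\iff w^{-1}\gamma>0$ and the positive/negative dichotomy are invoked only for real roots; this is guaranteed here because every $\tau_j$ is a $W$-image of a simple root, so imaginary roots never enter the inversion sets.
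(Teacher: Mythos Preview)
The paper states this lemma without proof; it is a standard fact about Coxeter groups (here applied to the affine Weyl group), and the authors simply record it for later use. Your argument is correct and is exactly the standard one: the recursion $Inv(r_\gamma w)=Inv(w)\sqcup\{w^{-1}\gamma\}$ when $\ell(r_\gamma w)=\ell(w)+1$, followed by induction on $n$, with (b) read off from the distinctness of the $\tau_j$ and the definition of $Inv(w_k)$. Your attention to the left-multiplication convention $w_n=r_n\cdots r_1$ and to the fact that only real roots appear (each $\tau_j$ being a $W$-translate of a simple root) is appropriate and correctly handled.
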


A reduced sequence of simple reflections determines a
non-repeating sequence of adjacent alcoves
\begin{equation}\label{alcovewalk}C_0, w_1^{-1} C_0,..,
w_{n-1}^{-1}C_0=r_1..r_{n-1}C_0,..,\end{equation} where the
step from $w_{n-1}^{-1}C_0$ to $w_n^{-1}C_0$ is implemented by the
reflection $r_{\tau_n}=w_{n-1}^{-1}r_nw_{n-1}$ (in particular the
wall between $C_{n-1}$ and $C_n$ is fixed by $r_{\tau_n}$).
Conversely, given a sequence of adjacent alcoves $(C_j)$ which is
minimal in the sense that the minimal number of steps to go from
$C_0$ to $C_j$ is $j$, there is a corresponding reduced sequence
of reflections.

\begin{definition}A reduced sequence of simple reflections
$\{r_j\}$ is affine periodic if, in terms of the identification of
$W$ with the affine Weyl group, (1) there exists $l$ such that
$w_l \in \check{\dot T}$ and (2) $w_{s+l}= w_s \circ w_l$, for all
$s$. We will refer to $w_l^{-1}$ as the period ($l$ is the length
of the period).
\end{definition}

\begin{remarks}$\phantom{a}$
\begin{enumerate}
\item[(a)] The second condition is equivalent to
periodicity of the associated sequence of simple roots
$\{\gamma_j\}$, i.e. $\gamma_{s+l}=\gamma_s$.
\item[(b)] In terms of the associated walk through alcoves, affine
periodicity means that the walk from step $l+1$ onward is the
original walk translated by $w_l^{-1}$.
\end{enumerate}
\end{remarks}

We now recall Theorem 3.5 of \cite{PP} (this is what we will need
in Section \ref{topstratum} for root subgroup factorization of generic loops in $\G_0$).

\begin{theorem}\label{periodic1}$\phantom{a}$
\begin{enumerate}
\item[(a)] There exists an affine
periodic reduced sequence $\{r_j\}_{j=1}^\infty$ of simple reflections such
that, in the notation of Lemma \ref{rootlemma},
\begin{equation}\label{flips}\{\tau_j:1\le j<\infty\}=\{q d^* -\dot{\alpha
}:\dot{\alpha }>0,q=1,2,\ldots\},\end{equation} i.e. such that the
span of the corresponding root spaces is $\dot{\mathfrak
n}^{-}(z\mathbb C[z])$. The period can be chosen to be any point
in $C \cap \check{\dot T}$.

\item[(b)] Given a reduced sequence as in (a), and a reduced expression
for $\dot {w}_0=r_{-N}\cdots r_0$ (where $\dot w_0$ is the longest
element of $\dot W$), the sequence
$$r_{-N},\ldots,r_0,r_1,\ldots$$
is another reduced sequence. The corresponding set of positive
roots mapped to negative roots is
$$\{q d^* +\dot{\alpha }:\dot{\alpha }>0,q=0,1,\ldots\}
,$$ i.e. the span of the corresponding root spaces is $\dot
{\mathfrak n}^{+}(\mathbb C[z])$.
\end{enumerate}
\end{theorem}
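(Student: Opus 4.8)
The plan is to produce the periodic sequence of part (a) by fixing a translation as the period and repeating a reduced word for it, and then to obtain (b) by prepending a reduced word for $\dot w_0$ and tracking inversion sets through the concatenation. For (a), I would fix $\mu\in C\cap\check{\dot T}$ and let $t_\mu\in\dot W\propto\check{\dot T}$ be the corresponding translation, so that the desired period is $w_l^{-1}=t_\mu$ and $w_l=t_{-\mu}$, with $l=\ell(t_{-\mu})$. Choosing any reduced word $t_{-\mu}=r_l\cdots r_1$ and repeating it periodically (declaring $r_{s+l}=r_s$) produces a candidate sequence with $w_{kl}=(t_{-\mu})^k=t_{-k\mu}$ and $w_{s+l}=w_s\circ w_l$, so that affine periodicity holds by construction. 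The two things left to check are that $\bigcup_n Inv(w_n)$ is the asserted set and that every prefix is reduced.

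The inversion computation rests on the action of a translation on a real affine root, $t_\nu(\dot\alpha+qd^*)=\dot\alpha+(q-\dot\alpha(\nu))d^*$. A short case analysis then gives
\[
Inv(t_{-\mu})=\{\,qd^*-\dot\alpha:\dot\alpha>0,\ 1\le q\le\dot\alpha(\mu)\,\},
\]
where the extreme value $q=\dot\alpha(\mu)$ arises because $t_{-\mu}$ sends $\dot\alpha(\mu)d^*-\dot\alpha$ to the finite negative root $-\dot\alpha$; retaining this boundary root is exactly what makes $|Inv(t_{-\mu})|=\ell(t_{-\mu})=\sum_{\dot\alpha>0}\dot\alpha(\mu)$. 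Because $\mu$ lies in $C$ and in the coroot lattice, $\dot\alpha(\mu)\ge 1$ for every positive root, so replacing $\mu$ by $k\mu$ and letting $k\to\infty$ exhausts every $q\ge 1$. Hence $\bigcup_n Inv(w_n)=\{qd^*-\dot\alpha:\dot\alpha>0,\ q\ge 1\}$, independently of the choice of $\mu\in C\cap\check{\dot T}$, and the corresponding root spaces $\dot{\mathfrak g}_{-\dot\alpha}z^q$ span $\dot{\mathfrak n}^{-}(z\C[z])$.

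Reducedness of every prefix is the crux of (a), and I would deduce it from length additivity of translations inside a fixed chamber: since $\ell(t_{-\nu})=\sum_{\dot\alpha>0}|\dot\alpha(\nu)|$ is linear on the dominant cone, $\ell(t_{-k\mu})=k\,\ell(t_{-\mu})$, so concatenating $k$ copies of a reduced word for $t_{-\mu}$ is itself reduced, and therefore so is every prefix. This is precisely where the hypothesis $\mu\in C$ is indispensable. For (b), I would prepend a reduced word $\dot w_0=r_{-N}\cdots r_0$ and use the concatenation rule $Inv(uv)=Inv(v)\sqcup v^{-1}Inv(u)$, valid when $\ell(uv)=\ell(u)+\ell(v)$, with $v=\dot w_0$ and $u=w_n$ the partial products from (a). Since $\dot w_0$ fixes $d^*$ it preserves the $d^*$-coefficient; every inversion $\tau_j$ from (a) has $d^*$-coefficient $\ge 1$, so $\dot w_0^{-1}\tau_j$ stays positive and is disjoint from $Inv(\dot w_0)=\{\dot\alpha:\dot\alpha>0\}$ (the roots of $d^*$-coefficient $0$). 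That disjointness is exactly the length additivity that keeps the combined word reduced. Passing to the union and using $\dot w_0^{-1}=\dot w_0$ with $-\dot w_0\dot\alpha$ running over the positive roots, I get $\dot w_0^{-1}(qd^*-\dot\alpha)=qd^*+(-\dot w_0\dot\alpha)$, so the total inversion set is $\{\dot\alpha:\dot\alpha>0\}\sqcup\{qd^*+\dot\alpha:\dot\alpha>0,\ q\ge 1\}=\{qd^*+\dot\alpha:\dot\alpha>0,\ q\ge 0\}$, spanning $\dot{\mathfrak n}^{+}(\C[z])$.

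I expect the main obstacle to be the reducedness claim in (a): establishing that the infinite periodic concatenation remains reduced at every stage, which hinges on the length additivity of dominant translations together with the correct accounting of the boundary roots $q=\dot\alpha(\mu)$ in the inversion-set computation. Once (a) is in place, the part (b) verification is a routine but essential disjointness check, driven entirely by the fact that $\dot w_0$ preserves $d^*$ while inverting the finite positive roots.
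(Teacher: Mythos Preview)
The paper does not prove this theorem; it is stated as a recollection of Theorem 3.5 of \cite{PP} and no argument is given here. So there is no in-paper proof to compare against.

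That said, your direct argument is correct and is exactly the standard one. A couple of small points worth making explicit. First, in (a) you use that $\mu\in C\cap\check{\dot T}$ forces $\dot\alpha(\mu)\ge 1$ for every positive root $\dot\alpha$; this is true because $\dot\alpha(\mu)$ is a positive integer (it is an integer since $\mu$ is in the coroot lattice, and strictly positive since $\mu\in C$), and you should say so, since the exhaustion $\bigcup_k Inv(t_{-k\mu})=\{qd^*-\dot\alpha:q\ge 1\}$ depends on it. Second, in (b) the criterion you are invoking is that $\ell(uv)=\ell(u)+\ell(v)$ is equivalent to $v^{-1}(Inv(u))$ consisting entirely of positive roots; your verification that $\dot w_0^{-1}(qd^*-\dot\alpha)=qd^*+(-\dot w_0\dot\alpha)$ is positive for $q\ge 1$ is precisely this check, and it simultaneously yields the disjointness from $Inv(\dot w_0)$ (which lives in $d^*$-degree $0$) and the claimed description of the union. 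With those two clarifications spelled out, your proof is complete and self-contained, and matches what one finds in \cite{PP}.
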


\subsection{The Basic Framework and Notation}

In the remainder of the paper we will mainly be concerned with a slightly restricted loop analogue of
(\ref{fdgroupdiagram}):
\begin{equation}\label{loopgroupdiagram}
\xymatrix{
 & G & \\
G_0 \ar[ur] & & U \ar[ul] \\
& K \ar[ur] \ar[ul] &
}
\end{equation}
where $U:=\tilde{L}\dot U$, the (simply connected) central circle
extension of $L\dot U$, $G:=\tilde{L}\dot {G}$, the (simply connected) central $\mathbb C^*$ extension of $L\G$,
$G_0:=(\tilde{L}\G_0)_0$, the identity component of the central circle extension of $L\G_0$, and
$K:= (\tilde{L}\K)_0$, the identity component of the central circle extension of $L\K$. There is a corresponding
diagram of Lie algebras, where the Lie algebra of $G$ is $\mathfrak g=\tilde{L}\dot{\mathfrak g}$,
and so on.

It will often happen that we can more simply work at the level of loops, rather than at the level of central extensions. We will often state results, for example, in terms of $G$, but in proving results it is often possible and easier
to work with $L\G$.

\subsection{Contrast with Finite Dimensions}

In \cite{CP2}, in which we considered $\G_0$ (constant loops), the key fact (depending on the inner assumption) is that
\[
\dot{\mathfrak n}^{\pm}=\dot{\mathfrak n}^{\pm}_{\dot{\mathfrak k}}+ \dot{\mathfrak n}^{\pm}_{\dot{\mathfrak p}}
\]
where the latter summand, $\dot{\mathfrak n}^{\pm}_{\dot{\mathfrak p}}=\dot{\mathfrak n}^{\pm}\cap \dot{\mathfrak p}^{\mathbb C}$, is an abelian ideal in the parabolic subalgebra $\dot{\mathfrak k}^{\mathbb C}+ \dot{\mathfrak n}^{\pm}_{\dot{\mathfrak p}}$ of $\dot{\mathfrak g}$. This leads to a block (coarse) triangular factorization, which largely reduces the (finite dimensional) inner noncompact case to the compact case.

In the present context there is an analogous decomposition
\[
\mathfrak n^{\pm}=\mathfrak n^{\pm}_{\mathfrak k}+ \mathfrak n^{\pm}_{\mathfrak p}.
\]
In this case
\[
\mathfrak n^{+}_{\mathfrak p}=(\mathfrak n^{+}\cap L^+\dot{\mathfrak n}^-_{\dot{\mathfrak p}})+
(\mathfrak n^{+}\cap L^+\dot{\mathfrak n}^-_{\dot{\mathfrak p}})
\]
where each of the two summands is a subalgebra, but the sum
is not a Lie algebra, let alone an abelian ideal in a parabolic subalgebra. This is one reason why Birkhoff factorization and root subgroup factorization are so much more subtle for loops into $\dot G_0$ than for $\dot G_0$ itself.

\subsection{Compact vs Noncompact type roots in $\g$}

As before, a root of $\h$ on $\g$ is said to be of compact type if the corresponding root space belongs to $\k^\C$, and said to be of non-compact type if the corresponding root space belongs to $\p^\C$.  Here $\k^\C=\widetilde{L}\dot{\k}^\C$ and $\p^\C=\n_\p^-+\n_\p^+$ (so this terminology is perhaps less than ideal).

\begin{example}
In rank one, the compact type roots are the imaginary roots and the noncompact type roots are the real roots.  This is yet another special feature of the rank one case.
\end{example}

\section{Birkhoff Decomposition for Loops}\label{noncpt1}

By definition the Birkhoff decomposition of $ G=\widetilde{L}\G$ is
\begin{equation}\label{birkhoff}
G=\bigsqcup_{ W} \Sigma^{ G}_{
w} \text{ where }\Sigma^{ G}_{ w}:=N^-wB^+.
\end{equation}
If we fix a representative $\mathbf w\in N_{ U}( T)$ for $ w\in W$,
then each $g\in\Sigma^{G}_{w}$ has a unique Birkhoff factorization
\begin{equation}\label{birkhoff4}
g=l\mathbf w ma u,\quad l\in N^-\cap wN^-w^{-1},\quad ma\in
TA, u\in N^+.
\end{equation}
As in the finite dimensional case, for fixed $m_0\in T$, $\{g\in \Sigma^{ G}_{w}: m(g)=m_0\}$ is a stratum (diffeomorphic to the product of the Birkhoff stratum for the flag space $G/B^+$ corresponding to $w$ with
$N^+$); see Theorem 8.7.2 of \cite{PS}. We will refer to $\Sigma^{G}_{w}$ as the ``(isotypic) component
of the Birkhoff decomposition of $G$ corresponding to $w\in W$."

One virtue of root subgroup factorization is that it generates many explicit examples of Birkhoff factorizations.

\subsection{Birkhoff Decomposition for $L\dot U$}

Given $w\in W$, define
$$\Sigma^{U}_{w}:=\Sigma^{G}_{w}\cap U.$$

\begin{theorem}\label{compact2} Fix a representative $\mathbf w\in N_{U}(T)$ for $ w$.
For $g\in\Sigma^{ G}_{ w}$ the unique factorization (\ref{birkhoff4}) induces a bijective correspondence
\[
\Sigma^{U}_{w}\leftrightarrow \left( N^-\cap w N^-w^{-1}\right)\times T\text{ given by }g\mapsto (l,m).
\]
\end{theorem}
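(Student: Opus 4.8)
The plan is to recognize the map $g \mapsto (l,m)$ as the combination of two well-understood pieces: an orbit map onto a Schubert cell in the flag space $G/B^+$, and the residual freedom of the compact torus $T$. The starting point is the identification $G/B^+ \cong U/T$, which holds because $U$ acts transitively on $G/B^+$ with isotropy $T$ at the base point (the loop-group Iwasawa decomposition $G = U\cdot B^+$ together with $U\cap B^+ = T$; see Chapter 8 of \cite{PS}). For $g \in \Sigma^U_w$ with factorization (\ref{birkhoff4}), the coset $gB^+ = l\mathbf{w}B^+$ depends only on $l$, since $mau \in B^+$, and under the cell isomorphism $N^- w B^+/B^+ \cong N^-\cap wN^-w^{-1}$ this coordinate is exactly $l$. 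Thus $g\mapsto l$ is nothing but the orbit map $\Sigma^U_w \to U/T = G/B^+$ landing in the Schubert cell attached to $w$, and two loops share the same $l$ precisely when they differ by right multiplication by an element of $T$.

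To analyze the $T$-direction, I would compute the effect of right translation by $t\in T$ on the factorization. Since $T$ normalizes $N^+$ and commutes with $A$, writing $ut = t\,(t^{-1}ut)$ gives $gt = l\,\mathbf{w}\,(mt)\,a\,(t^{-1}ut)$, a factorization of the same shape as (\ref{birkhoff4}); hence $l(gt)=l(g)$ while $m(gt) = m(g)\,t$. In particular $gt$ again lies in $\Sigma^U_w$, the $T$-action on $\Sigma^U_w$ is free, and the $m$-coordinate restricts to a bijection of each fiber $gT$ onto $T$. Injectivity of $g\mapsto(l,m)$ is then immediate: equal $l$ forces $g' = gt$ for some $t\in T$, and equal $m$ forces $t=1$. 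For surjectivity, given a target $(l_0,m_0)$, transitivity of $U$ on $G/B^+$ supplies some $g\in\Sigma^U_w$ whose image in $U/T$ is the cell point $l_0\mathbf{w}B^+$, so that $l(g)=l_0$; replacing $g$ by $g\,m(g)^{-1}m_0$ then adjusts the torus coordinate to $m_0$ without changing $l_0$.

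The substantive input, and the step I expect to be the genuine obstacle, is the homogeneous-space fact that $U$ acts transitively on $G/B^+$ with isotropy exactly $T$, i.e. the Iwasawa-type decomposition $G = U\cdot B^+$ and $U\cap B^+ = T$ for the central extension, together with the cell structure of the Birkhoff stratum $N^- w B^+/B^+$ in the flag space (Theorem 8.7.2 and Chapter 8 of \cite{PS}). This is what lets me avoid solving the unitarity relation directly: in principle one could instead substitute the factorization and its image under the involution $(\cdot)^*$, which interchanges $N^\pm$ (Proposition \ref{notationlemma}(a)), into $gg^*=1$ and show that $(l,m,w)$ determines $a$ and $u$ uniquely and solubly, but this computational route is exactly what the transitivity statement packages cleanly. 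Everything else, namely the preservation of the factorization form under right translation by $T$ and the bijectivity bookkeeping, is routine.
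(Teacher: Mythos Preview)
The paper does not actually prove Theorem \ref{compact2}; immediately after the statement it says ``We have recalled this result simply for the sake of comparison,'' so there is no proof in the paper to compare against. Your argument is correct and is the standard one: it uses the loop-group Iwasawa decomposition $G=U\cdot B^+$ with $U\cap B^+=T$ (Chapter 8 of \cite{PS}) to identify $\Sigma^U_w\to G/B^+$ with the $T$-fibration over the Birkhoff cell $N^-\cap wN^-w^{-1}$, and then reads off the $T$-coordinate from the right translation $gt=l\,\mathbf w\,(mt)\,a\,(t^{-1}ut)$.
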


We refer to $\Sigma^{U}_{w}$ as the isotypic component of the Birkhoff decomposition
for $U$; each component consists of a union of strata permuted by the action of $T$. The theorem provides
an explicit parameterization for these strata. We have recalled this result simply for the sake of comparison. Our primary objective is to investigate the Birkhoff decomposition for $L\G_0$.

\subsection{Birkhoff Decomposition for $G_0:=(\tilde L\G_0)_0$, the Identity Component}

Given $w\in W$, define
$$\Sigma^{G_0}_{w}:=\Sigma^{G}_{w}\cap G_0$$
$$\Sigma^{L\G_0}_{w}:=\Sigma^{L\G}_{w}\cap L\G_0$$
and so on.

As we stated in the introduction (where we focused on the rank one case), our original expectation was that each of these components
would be (modulo a torus) contractible to $w$. Our main objective in this subsection is to provide examples
in the rank one case, for the identity component, which illustrate why this is not true.

\begin{proposition} $\Sigma^{LSU(1,1)_{(0)}}_{1}$ is properly contained in $LSU(1,1)_{(0)}$.
\end{proposition}

\begin{proof}  For any $g\in LSU(1,1)$ there is a pointwise polar decomposition
$$
g=\left(\begin{matrix}\lambda&0\\0&\lambda^{-1}\end{matrix}\right)
\left(\begin{matrix}a&b\\ b^*& a\end{matrix}\right)
$$
where $a=\sqrt{1+\vert b\vert^2}$, and $\lambda:S^1\to S^1$.

If $g\in LSU(1,1)_{(0)}$, then $\lambda$ has degree zero, and thus $\lambda$ has a triangular factorization
\[
\lambda=e^{\psi_-}e^{\psi_0}e^{\psi_+}
\]
where $\psi_-=-\psi_+^*$ and $\psi_0\in i\mathbb R$. Because $a$ is a positive periodic function,
it will have a triangular factorization
\[
a=e^{\chi_-}e^{\chi_0}e^{\chi_+}
\]
where $\chi_-=\chi_+^*$ and $\chi_0\in \mathbb R$.

We can always multiply $g$ on the left (right) by something in $B^-$ ($B^+$, respectively) without affecting the question of whether $g$ has a triangular factorization. For example in determining whether $g$ has a triangular factorization, we can ignore the factor $\exp(\psi_-+\psi_0)$ in $\lambda$, because this can be factored out on the left. We will use this observation repeatedly (note that we can recover $\psi_-$ from $\psi_+$, and the zero mode is inconsequential).

There is a factorization of
\[
\left(\begin{matrix}a&b\\\bar b& a\end{matrix}\right)
\]
as the product
\[
\left(\begin{matrix}e^{\chi_-}&0\\0&e^{-\chi_-}\end{matrix}\right)
\left(\begin{matrix}1&0\\ b^*e^{(\chi_--\chi_0-\chi_+)} & 1\end{matrix}\right)\left(\begin{matrix}e^{\chi_0}&0\\ 0&e^{-\chi_0}\end{matrix}\right)
\left(\begin{matrix}1&b e^{(-\chi_--\chi_0+\chi_+)} \\ 0& 1\end{matrix}\right)
\left(\begin{matrix}e^{\chi_+}&0\\0&e^{-\chi_+}\end{matrix}\right).
\]
To obtain $g$ we have to multiply this on the left by $\lambda$. It follows after some calculation that $g$ will have a triangular factorization if and only if
\[
\left(\begin{matrix}1&0\\ b^*e^{(\chi_--\chi_0-\chi_+)}e^{-2\psi_+}& 1\end{matrix}\right)\left(\begin{matrix}e^{\chi_0}&0\\ 0&e^{-\chi_0}\end{matrix}\right)\left(\begin{matrix}1&b e^{(-\chi_--\chi_0+\chi_+)}e^{2\psi_+}\\ 0& 1\end{matrix}\right)
\]
has a triangular factorization.

At this point, to simplify notation, we let $b_1:=b e^{(-\chi_--\chi_0+\chi_+)}$. Note that $b_1b_1^*= bb^*e^{(-2\chi_0)}$. Thus $g$ has a triangular factorization if and only if the loop
\[
\left(\begin{matrix}1&0\\ b_1^*e^{-2\psi_+}& 1\end{matrix}\right)\left(\begin{matrix}e^{\chi_0}&0\\ 0&e^{-\chi_0}\end{matrix}\right)\left(\begin{matrix}1&b_1 e^{2\psi_+}\\ 0& 1\end{matrix}\right)=e^{\chi_0}\left(\begin{matrix}1&b_1e^{2\psi_+}\\ b_1^*e^{-2\psi_+}& b_1b_1^*+e^{-2\chi_0}\end{matrix}\right)
\] has a triangular factorization. Note that the $(2,2)$ entry of the right hand side equals $aa^*e^{-2\chi_0}$.

We directly calculate the kernel of the Toeplitz operator associated to this loop. We obtain the equations (for $f_1,f_2\in H^0(D)$)
\[
f_1+( b_1e^{2\psi_+}f_2)_+=0,\text{ and } \left(b_1^*e^{-2\psi_+}f_1+(b_1b_1^*+e^{-2\chi_0})f_2\right)_+=0.
\]
We can solve the first equation for $f_1$. The second equation becomes
\[
\left((e^{-2\chi_0}+b_1b_1^*)f_2-b_1^* e^{-2\psi_+}(b_1e^{2\psi_+}f_2)_+\right)_+=0.
\]
If we set $b_2=b_1e^{\chi_0}=be^{-\chi_-+\chi_+}$, then this can be rewritten as
\[
\left(f_2+b_2^* e^{-2\psi_+}(b_2e^{2\psi_+}f_2)_-\right)_+=0.
\]
If we set $F=e^{2\psi_+}f_2$, then we see that
there exists a nontrivial kernel if and only if there exists nonzero $F\in H_+$ such that
\begin{equation}\label{tricondition}
\left(e^{-2\psi_+}(F+b_2^*(b_2F)_-)\right)_+=0.
\end{equation}
It is easy to find $\psi_+$ and $b_2$ such that there does exist a nonzero $F$ satisfying this condition.

\begin{example} $b_2=\frac{1}{z}-1$, $e^{-2\psi_+}=e^{2z}$, and $F=\frac{1-e^{-2z}}{z}-1$.
In other words if
\[
g=\left(\begin{matrix}e^{\frac1{z}-z}&0\\0&e^{-\frac{1}{z}+z}\end{matrix}\right)
\left(\begin{matrix}(3-\frac1{z}-z)^{1/2}&e^{-\chi_-+\chi_+}(\frac{1}{z}-1)\\e^{+\chi_--\chi_+}(z-1)&(3-\frac1{z}-z)^{1/2}
\end{matrix}\right)
\] where
\[
(3-\frac1{z}-z)^{1/2}=e^{\chi_-}e^{\chi_0}e^{\chi_+}
\]
then $g$ is a loop in the identity component of $LSU(1,1)$ and does not have a Riemann-Hilbert factorizaton, hence also
does not have a triangular factorization.
\end{example}
\end{proof}

\subsection{Birkhoff Decomposition for nonidentity components of $L\G_0$}

Consider the rank one case and the problem of finding the Birkhoff factorization for $g$ which is of the form $g=\left(\begin{matrix}z^{-n}&0\\0&z^n\end{matrix}\right)g_0$, where $g_0$ is in the identity component and has a known triangular factorization (as for example in Theorem \ref{SU(1,1)theorem1}), and $n>0$. Write
$$g=\left(\begin{matrix}z^{-n}&0\\0&z^n\end{matrix}\right)
\left(\begin{matrix}l_{11}&l_{12}\\l_{21}&l_{22}\end{matrix}\right)\left(\begin{matrix}m_0a_0&0\\0&(m_0a_0)^{-1}\end{matrix}\right)
\left(\begin{matrix}u_{11}&u_{12}\\u_{21}&u_{22}\end{matrix}\right).$$
Factor $l$ as
$$\left(\begin{matrix}l_{11}&l_{12}\\l_{21}&l_{22}\end{matrix}\right)=
\left(\begin{matrix}L_{11}&L_{12}\\\sum_{k=-\infty}^{-2n}\alpha_kz^k&L_{22}\end{matrix}\right)
\left(\begin{matrix}1&0\\\sum_{k=-2n+1}^{-1}x_kz^k&1\end{matrix}\right).$$
Then $g$ will have the form
$$g=L'\left(\begin{matrix}z^{-n}&0\\0&z^n\end{matrix}\right)\left(\begin{matrix}1&0\\\sum_{-2n+1}^{0}x_kz^k&1\end{matrix}\right)
\left(\begin{matrix}m_0a_0&0\\0&(m_0a_0)^{-1}\end{matrix}\right)
\left(\begin{matrix}u_{11}&u_{12}\\u_{21}&u_{22}\end{matrix}\right)$$
where $L'\in N^-$. Consequently to find the Birkhoff factorization for $g$, it suffices
to find the factorization for the triangular matrix valued function
\begin{equation}\label{triangularloop}\left(\begin{matrix}z^{-n}&0\\0&z^n\end{matrix}\right)
\left(\begin{matrix}1&0\\\sum_{k=-2n+1}^{0}x_kz^k&1\end{matrix}\right)=
\left(\begin{matrix}z^{-n}&0\\\sum_{-n+1}^{n}c_{k}z^k&z^n\end{matrix}\right).
\end{equation}

\begin{remark}What we are doing here is factoring $N^-$ as $N^-\cap wN^-w^{-1}$ times $N\cap wN^+w^{-1}$. So this is very general. The problem of understanding Birkhoff factorization for
triangular matrix valued functions is considered in \cite{CG}.
\end{remark}

\begin{example} When $n=1$, we could take $g_0=g_1$ in Theorem \ref{SU(1,1)theorem1}. Then
$$g=\left(\begin{matrix}z^{-1}&0\\0&z\end{matrix}\right)\mathbf a(\eta_1)\begin{pmatrix} 1&\bar{\eta}_1\\
\eta_1&1\end{pmatrix}\mathbf
a(\eta_0)\begin{pmatrix} 1&
\bar{\eta}_0z\\
\eta_0z^{-1}&1\end{pmatrix},
$$
$$=\left(\begin{matrix}z^{-1}&0\\0&z\end{matrix}\right)
\begin{pmatrix} 1&0\\
\bar y_0+ \bar y_1z^{-1}&1\end{pmatrix}\begin{pmatrix} a_1&0\\
0&a_1^{-1}\end{pmatrix}\begin{pmatrix} \alpha_1 (z)&\beta_1 (z)\\
\gamma_1 (z)&\delta_1 (z)\end{pmatrix}$$
where $y_1=-\bar \eta_1$ and $y_0=-\bar \eta_0(1-\eta_1\bar \eta_1)$ (note $\vert y_0\vert, \vert y_1\vert<1$).
\end{example}

\begin{lemma}\label{roughclaim} Fix $n>0$. For a triangular matrix valued function as in (\ref{triangularloop}),
\begin{enumerate}
\item[(a)] the Toeplitz operator $A$ is invertible if and only if the Toeplitz matrix
\begin{equation}\label{T_matrix}
A'=\left(\begin{matrix}c_0&c_{-1}&..&c_{-n+1}\\c_{1}&c_0&..&c_{-n+2}\\
..&..&..&..\\c_{n-1}&..&..&c_0\end{matrix}\right)
\end{equation}
is invertible, and
\item[(b)] the shifted Toeplitz operator $A_1$ is invertible if and only if the Toeplitz matrix
\begin{equation}\label{shifted_T_matrix}
A''=\left(\begin{matrix}c_{1}&c_{0}&..&c_{-n+2}\\c_{2}&c_{1}&..&c_{-n+3}\\
..&..&..&..\\c_{n}&..&..&c_{1}\end{matrix}\right)
\end{equation}
is invertible.
\end{enumerate}
\end{lemma}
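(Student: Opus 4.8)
The plan is to compute, for each of the two operators, its kernel and cokernel explicitly, to identify each with the kernel of the corresponding finite Toeplitz matrix, and then to use squareness of those matrices to upgrade ``injective'' to ``invertible.'' Write elements of $H_+=H^0(\Delta,\C^2)$ as columns $\binom{f}{h}$ with $f,h\in H^0(\Delta)$, set $\gamma(z)=\sum_{k=-n+1}^{n}c_kz^k$, and let $(\cdot)_+$ denote the projection onto nonnegative Fourier modes. For $g$ as in (\ref{triangularloop}),
\[
A(g)\binom{f}{h}=\binom{(z^{-n}f)_+}{(\gamma f)_++z^nh},
\]
and since $g$ is a smooth loop with $\det g=1$, the operator $A(g)$ is Fredholm (as reflected in $A(g)A(g^{-1})$ being determinant class).

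For part (a) I would first treat the kernel. Vanishing of the top entry forces $f=\sum_{j=0}^{n-1}f_jz^j$, after which vanishing of the bottom entry forces the coefficients of $z^0,\dots,z^{n-1}$ in $\gamma f$ to vanish, the higher coefficients being uniquely absorbed into $z^nh$ with $h\in H^0(\Delta)$. Reading off these $n$ linear conditions on $(f_0,\dots,f_{n-1})$ recovers exactly the system $A'\mathbf f=0$, so $\ker A(g)\cong\ker A'$. To control the cokernel I would pass to the Hilbert-space adjoint, using $A(g)^*=A(g^*)$ where
\[
g^*=\begin{pmatrix}z^n&\gamma^*\\0&z^{-n}\end{pmatrix}
\]
is upper triangular; the identical bookkeeping gives $\ker A(g^*)\cong\ker (A')^*$. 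Because $A'$ is a square $n\times n$ matrix, $\dim\ker A'=\dim\ker (A')^*$, whence $\dim\ker A(g)=\dim\,\mathrm{coker}\,A(g)$, i.e. $A(g)$ has index zero. An index-zero operator is invertible iff injective, so $A(g)$ is invertible precisely when $\ker A'=0$, i.e. precisely when $A'$ is invertible.

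Part (b) is the same argument carried out on the shifted space $H_1:=H_+\ominus\C\binom{0}{1}$, i.e. on columns with $h_0=0$. Now the bottom entry of $A_1(g)\binom fh$ is required to vanish in the modes $z^1,\dots,z^n$ (the mode $z^0$ having been discarded by the compression), which replaces $A'\mathbf f=0$ by $A''\mathbf f=0$; thus $\ker A_1(g)\cong\ker A''$. Since $A_1(g)^*=A_1(g^*)$ with $g^*$ again upper triangular, the analogous cokernel computation yields $\ker A_1(g^*)\cong\ker(A'')^*$, and squareness of $A''$ gives $\dim\ker A_1(g)=\dim\,\mathrm{coker}\,A_1(g)$. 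Hence $A_1(g)$ is invertible iff injective iff $A''$ is invertible. (Alternatively, one may reduce (b) to (a) via the unitary $\binom fh\mapsto\binom{f}{z^{-1}h}$ from $H_1$ onto $H_+$, which conjugates $A_1(g)$ into $A(\tilde g)$ for the triangular loop $\tilde g$ with lower-left entry $z^{-1}\gamma$, whose associated matrix is precisely $A''$.)

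The routine part is the linear bookkeeping translating the Hardy-space conditions into the finite systems $A'\mathbf f=0$ and $A''\mathbf f=0$. The step that requires care, and that does the real work, is the cokernel computation via the adjoint: it is what converts injectivity into genuine invertibility by establishing index zero. One must verify that the adjoint symbol $g^*$ is upper triangular of the same type, that its kernel is governed by the conjugate-transpose matrix $(A')^*$ (resp. $(A'')^*$), and — crucially — that this matrix is square, so that its kernel and that of $A'$ (resp. $A''$) have equal dimension. Without this, injectivity of $A(g)$ would not by itself force surjectivity.
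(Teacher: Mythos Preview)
Your proof is correct and follows essentially the same route as the paper: compute the kernel of $A(g)$ (resp.\ $A_1(g)$) by first using the top entry to force $f$ to be a polynomial of degree $\le n-1$, then read off from the bottom entry the $n$ linear conditions that constitute $A'\mathbf f=0$ (resp.\ $A''\mathbf f=0$). The one point of difference is how index zero is established. The paper simply asserts it (this being a standard fact for Toeplitz operators of smooth loops into $SL(2,\mathbb C)$), whereas you derive it by computing $\ker A(g^*)\cong\ker(A')^*$ and invoking squareness of $A'$. Your route is more self-contained; the paper's is shorter but leans on background. Your alternative reduction of (b) to (a) via the unitary shift $\binom{f}{h}\mapsto\binom{f}{z^{-1}h}$ is a pleasant extra observation not in the paper.
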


\begin{proof} The Fredholm indices for both operators are zero, so we need to check the kernels.

Part (a): Suppose that $$\left(\begin{matrix}f\\h\end{matrix}\right)=\left(\begin{matrix}\sum_{k=0}^{\infty}f_kz^k\\\sum_{k=0}^{\infty}h_kz^k\end{matrix}\right)$$
is in the kernel of $A$. Then $(z^{-n}f)_+=0$, implying $f=\sum_{k=0}^{n-1}f_kz^k$, and
$$\left((\sum_{k=-n+1}^{n}c_kz^k)(\sum_{k=0}^{n-1}f_kz^k)+\sum_{k=0}^{\infty}h_kz^{k+n}\right)_+=0.$$
This equation implies $h_k=0$ for $k\ge n$.  These equations have the matrix form

$$\left(\begin{matrix}A'&0\\C'&1_{n\times n}\end{matrix}\right)\left(\begin{matrix}\vec{f}\\\vec{h}\end{matrix}\right)=\vec{0}$$
where $\vec{f}$ (resp. $\vec{h}$) is the vector of coefficients of $f$ (resp. $h$) and $A'$ is the $n\times n$ Toeplitz matrix in (\ref{T_matrix}).
This implies part (a).

Part (b): Suppose that $$\left(\begin{matrix}f\\h\end{matrix}\right)=\left(\begin{matrix}\sum_{k=0}^{\infty}f_kz^k\\\sum_{k=1}^{\infty}h_kz^k\end{matrix}\right)$$
is in the kernel of $A_1$. Then $(z^{-n}f)_+=0$, implying $f=\sum_{k=0}^{n-1}f_kz^k$, and
$$\left((\sum_{k=-n+1}^{n}c_kz^k)(\sum_{k=0}^{n-1}f_kz^k)+\sum_{k=1}^{\infty}h_kz^{k+n}\right)_+=0.$$
These equations have the matrix form

$$\left(\begin{matrix}A''&0\\C''&1_{n\times n}\end{matrix}\right)\left(\begin{matrix}\vec{f}\\\vec{h}\end{matrix}\right)=\vec{0}$$
where $A''$ is the $n\times n$ Toeplitz matrix in (\ref{shifted_T_matrix}).
$$A''=\left(\begin{matrix}c_{1}&c_{0}&..&c_{-n+2}\\c_{2}&c_{1}&..&c_{-n+3}\\
..&..&..&..\\c_{n}&..&..&c_{1}\end{matrix}\right).$$
This implies part (b).
\end{proof}

\begin{example} Suppose $n=1$.  When $c_0\ne 0$ there is a Riemann-Hilbert factorization (because $A$ is invertible)
$$ \left(\begin{matrix}z^{-1}&0\\ c_0+c_1z&z\end{matrix}\right)= \left(\begin{matrix}1&z^{-1}/c_0\\0&1\end{matrix}\right)
 \left(\begin{matrix}-c_1/c_0&-1/c_0\\c_0&0\end{matrix}\right) \left(\begin{matrix}1+\frac{c_1}{c_0}z&z/c_0\\\frac{-c_1^2}{c_0}z&1-\frac{c_1}{c_0}z\end{matrix}\right).   $$
When $c_0,c_1\ne 0$, there is a triangular factorization (because $A$ and $A_1$ are invertible),
$$ \left(\begin{matrix}z^{-1}&0\\ c_0+c_1z&z\end{matrix}\right)= \left(\begin{matrix}1-\frac{c_0}{c_1}z^{-1}&\frac1{c_0}z^{-1}\\-\frac{c_0^2}{c_1}&1\end{matrix}\right)
 \left(\begin{matrix}-\frac{c_1}{c_0}&0\\0&-\frac{c_0}{c_1}\end{matrix}\right)
 \left(\begin{matrix}1&\frac1{c_0}\\-\frac{c_1^2}{c_0}z&1-\frac{c_1}{c_0}z\end{matrix}\right).   $$
In this case $g\in \Sigma^{LSU(1,1)_{(-1)}}_1$.

When $c_1\to 0$ this ``degenerates" to a Birkhoff factorization
$$\left(\begin{matrix}z^{-1}&0\\c_0&z\end{matrix}\right)= \left(\begin{matrix}1&\frac1{c_0}z^{-1}\\0&1\end{matrix}\right)
 \left(\begin{matrix}0&-1\\1&0\end{matrix}\right) \left(\begin{matrix}\frac1{c_0}&0\\0&c_0\end{matrix}\right)\left(\begin{matrix}1&\frac1{c_0}z\\0&1\end{matrix}\right).   $$ In this case $g\in \Sigma^{LSU(1,1)_{(-1)}}_{r_1}$.

When $c_0\to 0$ this ``degenerates" to a Birkhoff factorization
$$\left(\begin{matrix}z^{-1}&0\\c_1z&z\end{matrix}\right)= \left(\begin{matrix}1&\frac1{c_1}z^{-2}\\0&1\end{matrix}\right)
 \left(\begin{matrix}0&-z^{-1}\\z&0\end{matrix}\right) \left(\begin{matrix}\frac1{c_1}&0\\0&c_1\end{matrix}\right)\left(\begin{matrix}1&\frac1{c_1}\\0&1\end{matrix}\right).   $$
In this case $g\in \Sigma^{LSU(1,1)_{(-1)}}_{r_0}$.

When both $c_0,c_1\to 0$ this goes to $\left(\begin{matrix}z^{-1}&0\\0&z\end{matrix}\right)$.In this case $g\in \Sigma^{LSU(1,1)_{(-1)}}_{r_0r_1}$, where in the Weyl group $\left(\begin{matrix}z^{-1}&0\\0&z\end{matrix}\right) =r_0r_1  $.
\end{example}

These calculations
show that we are obtaining loops in the corresponding strata, despite the fact that neither $r_0$ nor
$r_1$ are represented by loops in $\K=S^1$. Moreover the conditions on $c_0,c_1$ above show that there is something
topologically nontrivial about the intersection of the $\Sigma_{r_1}$ component with the $n=-1$ connected component.

\section{Root subgroup factorization for generic loops in $\G_0$}\label{topstratum}

Our objective in this section is to prove analogues of Theorems 4.1, 4.2, and 5.1 of \cite{PP}, for
generic loops in $\G_0$ (which is always assumed to be of inner type). The structure of the proofs
in this noncompact context is basically the same as in \cite{PP}. But there are important differences.
In order to obtain formulas for determinants of Toeplitz operators, as in Theorem \ref{Toeplitz},
we have to work with the central extension $\widetilde L\dot G$.

Throughout this section we choose a reduced
sequence $\{r_j\}_{j=1}^\infty$ as in Theorem \ref{periodic1}, part (a). We set $\mathbf w_j=\mathbf r_j...\mathbf r_1$ and
$$i_{\tau_n}=\mathbf w_{n-1} i_{\gamma_n} \mathbf
w_{n-1}^{-1}, \qquad n=1,2,\ldots$$
$$i_{\tau_{-N}^{\prime}}=i_{\gamma_{-N}},\quad
i_{\tau_{-(N-1)}^{\prime}}=\mathbf
r_{-N}i_{\gamma_{-(N-1)}}\mathbf r_{-N}^{-1},\ldots,\quad
i_{\tau_0^{\prime}}=\dot{\mathbf w}_0 i_{\gamma_0} \dot{\mathbf
w}_0^{-1}
$$ and for $n>0$
$$i_{\tau_n^{\prime}}=\dot{\mathbf w}_0 \mathbf w_{n-1}
i_{\gamma_n} \mathbf w_{n-1}^{-1} \dot{\mathbf w}_0^{-1}.$$

As in \cite{CP2}, for $\zeta\in \mathbb
C$, let $\mathbf a_+(\zeta)=(1+\vert\zeta\vert^2)^{-1/2}$  and
\begin{equation}\label{kfactor}
k(\zeta)=\mathbf a_+(\zeta)
\begin{pmatrix}1&-\bar{\zeta}\\\zeta&1
\end{pmatrix}=\begin{pmatrix}1&0\\\zeta&1
\end{pmatrix}\begin{pmatrix}\mathbf a_+(\zeta)&0\\0&\mathbf a_+(\zeta)^{-1}
\end{pmatrix}\begin{pmatrix}1&-\bar{\zeta}\\0&1
\end{pmatrix} \in SU(2).
\end{equation}
For $|\zeta|<1$, let $\mathbf a_-(\zeta)=(1-\vert\zeta\vert^2)^{-1/2}$ and
\begin{equation}\label{qfactor}
q(\zeta)=\mathbf a_-(\zeta)
\begin{pmatrix}1&\bar{\zeta}\\\zeta&1
\end{pmatrix}=\begin{pmatrix}1&0\\\zeta&1
\end{pmatrix}\begin{pmatrix}\mathbf a_-(\zeta)&0\\0&\mathbf a_-(\zeta)^{-1}
\end{pmatrix}\begin{pmatrix}1&\bar{\zeta}\\0&1
\end{pmatrix} \in SU(1,1).
\end{equation}

\subsection{Generalizations of Theorem \ref{SU(1,1)theorem1}}\label{topstratum1}

\begin{theorem}\label{Gtheorem1} Suppose that $\widetilde{g}_1 \in \widetilde{L}_{fin}\dot{G}_0$
and $\Pi(\widetilde{g}_1)=g_1$. Consider the following three statements:
\begin{enumerate}
\item[(I.1)] $m(\widetilde{g}_1)=1$, and for each complex irreducible
representation $V(\pi)$ for $\dot{G}$, with lowest weight vector
$\phi \in V(\pi)$, $\pi(g_1)^{-1}(\phi)$ is a polynomial in $z$
(with values in $V$), and is a positive multiple of $\phi$ at
$z=0$.
\item[(I.2)] $\widetilde{g}_1$ has a factorization of the form
\[
\widetilde{g}_1=i_{\tau'_n}(g(\eta_n))..i_{\tau'_{-N}}(g(\eta_{-N}))\in \widetilde{L}_{fin}\dot G_0
\]
where $g(\eta_j)=k(\eta_j)$ for some $\eta_j\in\C$ (resp. $g(\eta_j)=q(\eta_j)$ for some $\eta_j\in\Delta$) when $\tau_j$ is a compact type (resp. non-compact type) root.
\item[(I.3)] $\widetilde{g}_1$ has triangular factorization of the form
$\widetilde g_1=l_1a_1u_1$ where $l_1\in \dot N^-(\mathbb
C[z^{-1}])$.
\end{enumerate}
Then statements (I.1) and (I.3) are equivalent. (I.2) implies (I.1) and (I.3).

Moreover, in the notation of (I.2),
$$a_1=\prod_{j=-N}^n \mathbf a(\eta_j)^{h_{\tau^{\prime}_j}}.$$

Similarly, suppose that $\widetilde{g}_2 \in \widetilde{L}_{fin}\dot{G}_0$
and $\Pi(\widetilde{g}_2)=g_2$. Consider the following three statements:

\begin{enumerate}
\item[(II.1)] $m(\widetilde g_2)=1$, and for each complex irreducible
representation $V(\pi)$ for $\dot{G}$, with highest weight vector
$v \in V(\pi)$, $\pi(g_2)^{-1}(v)$ is a polynomial in $z$ (with
values in $V$), and is a positive multiple of $v$ at $z=0$.

\item[(II.2)] $\widetilde g_2$ has a factorization of the form
$$\widetilde g_2=i_{\tau_n}(g(\zeta_n))..i_{\tau_1}(g(\zeta_1))$$
for some $\zeta_j \in \Delta$.

\item[(II.3)] $\widetilde g_2$ has triangular factorization of the form
$\widetilde g_2=l_2a_2u_2$, where $l_2 \in \dot N^+(z^{-1}\mathbb
C [z^{-1}])$.
\end{enumerate}
Then statements (II.1) and (II.3) are equivalent. (II.2) implies (II.1) and (II.3).

Also, in the notation of (II.2),
\begin{equation}\label{productformula}
a_2=\prod_{j=1}^n \mathbf a(\zeta_j)^{h_{\tau_j}}.
\end{equation}
\end{theorem}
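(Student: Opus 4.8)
The plan is to treat the two halves of the theorem as formally dual: I would prove the assertions for $\widetilde g_1$ in full and indicate the (highest weight, $\dot N^+$-valued) modifications needed for $\widetilde g_2$. Throughout I follow the scheme of Theorems 4.1 and 4.2 of \cite{PP}, the essential new ingredient being that the reduced sequence now produces noncompact blocks $q$ of (\ref{qfactor}) interspersed among the compact blocks $k$ of (\ref{kfactor}).

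First I would establish the equivalence of (I.1) and (I.3) through the fundamental matrix coefficient calculus of Section \ref{background}, following the model of the corresponding equivalence in \cite{PP}. Starting from a triangular factorization $\widetilde g_1=l_1a_1u_1$ as in (I.3), I would compute the action of $\pi(g_1)^{-1}$ on the lowest weight vector $\phi$ factor by factor: $l_1$ is valued in $\dot N^-$ and hence fixes $\phi$, $a_1$ scales $\phi$ by a positive constant, and $u_1\in N^+$ extends holomorphically to $\Delta$, so (as $\widetilde g_1$ is finite) $\pi(g_1)^{-1}\phi$ is polynomial in $z$ with the positivity at $z=0$ recorded in (I.1). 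For the converse I would read the hypotheses of (I.1) back through the diagonal formula (\ref{diagonal}) and Proposition \ref{notationlemma}: these force $m(\widetilde g_1)=1$, place $\widetilde g_1$ in the open stratum with $a_1>0$ (where $\mathbf a_\pm>0$ enters), and, via the polynomiality, force the lower factor to be polynomial in $z^{-1}$, i.e.\ $l_1\in\dot N^-(\C[z^{-1}])$.

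The computational core is the implication (I.2) $\Rightarrow$ (I.3), which I would prove by induction on the number of root subgroup factors. Using (\ref{kfactor}) and (\ref{qfactor}), each block $g(\eta_j)$ is a product of a lower triangular, a diagonal, and an upper triangular factor, so $i_{\tau'_j}(g(\eta_j))$ is a product of a root group element in the direction $-\tau'_j$, the diagonal factor $\mathbf a(\eta_j)^{h_{\tau'_j}}$, and a root group element in the direction $+\tau'_j$. The induction moves the successive positive-direction factors to the right, past the lower-direction factors of the blocks that follow; Lemma \ref{rootlemma}, and in particular the positivity $w_k\tau_n>0$ for $k<n$, guarantees that the resulting commutators stay inside $N^+$. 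After reordering, the negative-direction pieces accumulate on the left into a single $l_1$, the diagonals collect into $a_1=\prod_{j=-N}^n\mathbf a(\eta_j)^{h_{\tau'_j}}$, and the positive-direction pieces collect on the right into $u_1\in N^+$. The affine periodicity and the inversion set description in Theorem \ref{periodic1} ensure that the accumulated $l_1$ lies in $\dot N^-(\C[z^{-1}])$, which is (I.3); then (I.2) $\Rightarrow$ (I.1) follows from the equivalence already proved, and the product formula for $a_1$ is exactly the diagonal produced by this bookkeeping.

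I expect the main obstacle to be controlling this reordering uniformly across the two types of block. The compact block $k$ (arising from $\SU(2)$, with normalization $\mathbf a_+$) and the noncompact block $q$ (arising from $\SU(1,1)$, with normalization $\mathbf a_-$ and the constraint $|\eta_j|<1$) differ in the sign of the upper unipotent entry and in the diagonal factor, so the commutation identities, and especially the positivity of the resulting $a_1$, must be verified to survive the interleaving of spheres and disks; this is precisely the phenomenon absent from the purely compact setting of \cite{PP}. The statements for $\widetilde g_2$ are obtained by the same argument, with the lowest weight vector $\phi$ replaced by the highest weight vector $v$, the constraint $l_1\in\dot N^-(\C[z^{-1}])$ replaced by $l_2\in\dot N^+(z^{-1}\C[z^{-1}])$, and the sequence $\{\tau'_j\}$ replaced by $\{\tau_j\}$; the product formula (\ref{productformula}) follows in the same way.
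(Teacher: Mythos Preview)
Your treatment of (I.2) $\Rightarrow$ (I.3) and the product formula matches the paper's inductive approach closely: both arguments triangularize each block via (\ref{kfactor})/(\ref{qfactor}), then push the positive-root pieces past the remaining lower factors using the inversion-set combinatorics of Lemma \ref{rootlemma} and the decomposition $N^+=(N^+\cap w_{n-1}N^-w_{n-1}^{-1})(N^+\cap w_{n-1}N^+w_{n-1}^{-1})$. The implication (I.3) $\Rightarrow$ (I.1) is likewise handled as in the paper, by acting on the extremal weight vector.

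The gap is in (I.1) $\Rightarrow$ (I.3). You write that the hypotheses of (I.1) ``place $\widetilde g_1$ in the open stratum'' and then ``via the polynomiality, force the lower factor to be polynomial in $z^{-1}$,'' but you never explain \emph{why} a triangular factorization exists at all; Proposition \ref{notationlemma} and (\ref{diagonal}) only tell you what the diagonal looks like \emph{once} you know $\widetilde g_1\in N^-TAN^+$. The paper's argument here is substantive and not captured by your outline: working with $g_2$ (dually, $g_1$), one first observes that (II.1) makes the pointwise $\dot G$-triangular factorization of $g_2(z)^{-1}$ exist in a punctured neighborhood of $z=0$; one then uses the explicit minor formulas (\ref{lower}) for the entries of an LDU factorization, together with the polynomiality in (II.1), to show that the pointwise lower factor $\dot l(g_2^{-1})$ has a removable singularity at $z=0$. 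Finally, the (always unobstructed) loop triangular factorization of $\dot u(g_2^{-1})^{-1}\in L\dot N^+$ is used to \emph{construct} $l_2$ and $u_2$ via (\ref{backwards}), exploiting that in the finite case $g_2$ is defined on all of $\mathbb C^*$. Without this pointwise-to-loop construction, there is no mechanism in your proposal for producing the factorization asserted in (I.3), so this step needs to be filled in.
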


\begin{remark} Note that we are not making any attempt to characterize the set of
$l_1$ that arise in (I.3) (and similarly for the set of $l_2$ in (II.3)).
\end{remark}

\begin{conjecture}\label{conjecture1}
If $g_1$ is in the
identity connected component of the sets in (I.1) and (I.3), then the converse holds, i.e. $g_1$ has a root subgroup
factorization as in (I.2).  If $g_2$ is in the
identity connected component of the sets in (II.1) and (II.3), then the converse holds, i.e., $g_2$ has a root subgroup
factorization as in (II.2).
\end{conjecture}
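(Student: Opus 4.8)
The plan is to prove the converse by an inductive ``peeling'' procedure modeled on the proof of the compact case in \cite{PP} (Theorems 4.1, 4.2, 5.1), the essential new ingredient being an argument that forces the noncompact-type parameters into the disk $\Delta$. I will treat the $g_1$ statement; the $g_2$ statement is entirely parallel. Assume $g_1$ satisfies (I.1)/(I.3) and lies in the identity component. The outermost root subgroup parameter $\eta_n$ is determined algebraically: applying condition (I.1) to a suitable fundamental representation $V(\pi)$ and examining the top-degree coefficient in $z$ of $\pi(g_1)^{-1}\phi$ in the weight space produced by $i_{\tau'_n}$, one reads off $\eta_n$ as a ratio of two matrix coefficients. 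When $\tau_n$ is of compact type this ratio is an unconstrained element of $\C$ and the factor $k(\eta_n)$ is defined; when $\tau_n$ is of noncompact type one must verify $|\eta_n|<1$ before $q(\eta_n)$ is defined. Granting this, one forms $g_1'=i_{\tau'_n}(g(\eta_n))^{-1}g_1$ and checks, exactly as in \cite{PP}, that $g_1'$ again satisfies (I.1)/(I.3) but with the reduced inversion data (the root $\tau'_n$ removed), so that the induction proceeds and terminates in the finite case, reconstructing the factorization (I.2).

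The core of the argument, and the step that distinguishes the noncompact case, is the control of the noncompact-type parameters. My plan is to run the extraction not just for $g_1$ but along a path $g_1^{(t)}$, $t\in[0,1]$, joining $g_1$ to the identity inside the identity component of the set (I.1)/(I.3); such a path exists by hypothesis. The extracted parameters $\eta_j(t)$ are continuous, indeed real-analytic, in $t$ wherever the defining denominators are nonzero, and $\eta_j(0)=0$. I would then show that the subset of the identity component on which the peeling is valid and yields all noncompact parameters in $\Delta$ is both open and closed, hence all of the identity component. Openness is immediate from continuity of the extraction and openness of $\Delta$. Closedness is the crux: one must rule out a parameter drifting to $|\eta_j(t)|\to 1$. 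Here I would exploit the fundamental matrix coefficients $\sigma_j$ together with the Toeplitz determinant formulas of Theorem \ref{Toeplitz}, the identities $|\sigma_j|^2(g)=\det(A(g)A(g^{-1}))$ of Proposition \ref{notationlemma}(c), and the product formula $a_1=\prod_j \mathbf a(\eta_j)^{h_{\tau'_j}}$: the relevant determinants are continuous and finite along the path, while a noncompact parameter approaching the unit circle forces $\mathbf a_-(\eta_j)=(1-|\eta_j|^2)^{-1/2}\to\infty$ and hence a blow-up of the corresponding invariant, a contradiction. In rank one this mechanism is clean, because every building block is a disk and the determinant is a product of terms $(1-|\eta_i|^2)^{-i}$ that are all $\geq 1$ and diverge at the circle, so finiteness of the determinant confines each $|\eta_i(t)|$ to a compact subset of $\Delta$ and keeps the extraction denominators bounded away from zero.

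The main obstacle I anticipate, and the reason this is stated as a conjecture rather than a theorem, is that in higher rank the reduced sequence interleaves compact-type and noncompact-type reflections in an order dictated by the alcove walk, so the clean separation available in rank one (Theorem \ref{SU(1,1)theorem1}) is lost. Consequently the ``boundedness forces $|\eta_j|<1$'' mechanism is no longer transparent: a single $\sigma_j$ typically records a product mixing disk factors $\mathbf a_-(\eta_i)\geq 1$ (which can blow up) with sphere factors $\mathbf a_+(\eta_i)\in(0,1]$ (which remain bounded), so finiteness of $|\sigma_j|$ does not by itself isolate a single diverging noncompact parameter. Disentangling these requires an induction in which the compact-type factors are peeled using the already-established compact theory of \cite{PP}, while the noncompact-type factors are controlled by the positivity built into condition (I.1) --- the requirement that $\pi(g_1)^{-1}\phi$ be a \emph{positive} multiple of $\phi$ at $z=0$, reflecting the forward-cone condition for the indefinite $SU(1,1)$ form. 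Making this simultaneous control uniform along the path, so as to establish closedness and hence the open-and-closed dichotomy, is precisely the gap; the rank-one example following Theorem \ref{SU(1,1)theorem1} shows that without the connectedness hypothesis the conclusion genuinely fails, since the branch in which $a_2^2>0$ arises from both numerator and denominator negative admits a triangular factorization but no root subgroup factorization.
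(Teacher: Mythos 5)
You should first be clear about the status of the target: this statement is left \emph{open} in the paper (it is stated as Conjecture \ref{conjecture1}), and the authors prove only a fragment of it --- the rank-one case with the degree $n$ held fixed, which is what completes Theorem \ref{SU(1,1)theorem1} --- inside the proof of Theorem \ref{Gtheorem1} (see Remark \ref{proofofconjecture1}). There the mechanism is essentially the one you propose for rank one: the forward map $(\zeta_j)_{j=1}^n \mapsto l(g^{(n)}) \in \exp(\oplus_{j=1}^n \C f_{\tau_j})$ is injective and open, and its image is closed in the set of $l_2$ admitting a triangular factorization because the product formula $a_2=\prod_{j}\mathbf a(\zeta_j)^{h_{\tau_j}}$ degenerates as a noncompact parameter approaches $\partial\Delta$; hence the image is the connected component containing $l_2=1$. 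Your continuity method along a path, with closedness enforced by finiteness of the $|\sigma_j|$'s, is the same idea in different packaging (note Proposition \ref{notationlemma}(c) makes your Toeplitz-determinant argument and the product-formula argument interchangeable in rank one). One repair to your rank-one closedness step: the formulas of Theorem \ref{Toeplitz} are for loops carrying the \emph{full} factorization including $\exp(\chi)$, and the factors $e^{-2j|\chi_j|^2}\le 1$ could in principle offset a blow-up of $(1-|\eta_i|^2)^{-i}$ (and the exponent $i=0$ term constrains nothing), so the clean statement is the product formula itself, $a_1=\prod \mathbf a_-(\eta_i)\ge 1$, together with continuity of $a_1$ along the path --- which is exactly what the paper uses.

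Where your diagnosis and the paper's part ways is in \emph{why} the general case is open, and the paper's stated obstruction is one your path scheme does not address. The sets in (I.1)/(I.3) and (II.1)/(II.3) of Theorem \ref{Gtheorem1} are not formulated with $n$ fixed, so the ``identity connected component'' in Conjecture \ref{conjecture1} is taken in a set mixing all degrees; the open-and-closed dichotomy is only available inside a fixed-$n$ slice $\exp(\oplus_{j=1}^n\C f_{\tau_j})$, and the authors state explicitly that the difficulty is that they do not know how to formulate (I.1)/(II.1) in general rank in a way that regards $n$ as fixed. Your path $g_1^{(t)}$ can drift through loops whose $l$-data live in slices of different length, and your peeling order is tied to the chosen reduced sequence, so ``open and closed along the path'' is not a dichotomy in any fixed finite-dimensional parameter space; this is a concrete gap beyond the one you acknowledge. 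The higher-rank entanglement you do flag --- mixed products of $\mathbf a_+\le 1$ and $\mathbf a_-\ge 1$ factors remaining bounded while individual parameters escape (compact ones to $\infty$, noncompact ones to the circle) --- is a genuine worry, and indeed it bears even on the paper's own one-line closedness claim in Remark \ref{proofofconjecture1}; but it is not the obstruction the authors single out. In short: your proposal correctly reconstructs the proved rank-one, fixed-$n$ fragment by essentially the paper's argument, and honestly marks the remainder as conjectural; the decisive missing idea, per the paper, is a degree-stratified reformulation of conditions (I.1)/(II.1), not only uniform control of the noncompact parameters along a path.
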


In the course of the following proof of Theorem \ref{Gtheorem1}, we will prove a version of this conjecture, in the rank one case, which
completes the proof of Theorem \ref{SU(1,1)theorem1} (see Remark \ref{proofofconjecture1} below).

\begin{proof}  The two sets of implications are proven in the same way.
We consider the second set.

We first want to argue that (II.2) implies (II.3). We recall that the
subalgebra $\mathfrak n^- \cap \mathbf w_{n-1}^{-1} \mathfrak
n^+ \mathbf w_{n-1}$ is spanned by the root spaces corresponding
to negative roots $-\tau_j$, $j=1,..,n$. The calculation is the
same as in the proof of Theorem 2.5 in \cite{CP2}. In the process we will also prove the
product formula for $a_2$.

The equation (\ref{kfactor}) implies that
\begin{eqnarray*}
\iota_{\tau_j}(g(\zeta_j)) & = & \iota_{\tau_j}(
\begin{pmatrix}1&0\\\zeta_j&1
\end{pmatrix})\mathbf a(\zeta_j)^{h_{\tau_j}}\iota_{\tau_j}(
\begin{pmatrix}1&\pm\bar{\zeta_j}\\0&1
\end{pmatrix}) \\
& = & \exp(\zeta_j
f_{\tau_j})\mathbf a(\zeta_j)^{h_{\tau_j}}\mathbf
w_{j-1}^{-1}\exp(\pm\bar{\zeta}_j e_{\gamma_j})\mathbf w_{j-1}
\end{eqnarray*}
is a triangular factorization.  Here, $\mathbf a(\zeta_j)=\mathbf a_{\pm}(\zeta_j)$ and the plus/minus case is used when $\tau_j$ is a compact/noncompact type root, respectively.

Let $g^{(n)}=\iota_{\tau_n}(g(\zeta_n))..\iota_{\tau_1}(g(\zeta_1))$.
First suppose that $n=2$. Then
\begin{equation}\label{loopn=2case}
g^{(2)}=
\exp(\zeta_2 f_{\tau_2})\mathbf a(\zeta_2)^{h_{\tau_2}}\mathbf
r_1\exp(\pm\bar{\zeta}_2 e_{\gamma_2})\mathbf r_1^{-1} \exp(\zeta_1
f_{\gamma_1})\mathbf a(\zeta_1)^{h_{\gamma_1}}\exp(\pm\bar{\zeta}_1
e_{\gamma_1}).
\end{equation}
The key point is that
\begin{eqnarray*}
\mathbf r_1\exp(\pm\bar{\zeta}_2 e_{\gamma_2})\mathbf r_1^{-1} \exp(\zeta_1
f_{\gamma_1}) & = & \mathbf r_1 \exp(\pm\bar{\zeta}_2 e_{\gamma_2})
\exp(\zeta_1 e_{\gamma_1})\mathbf r_1^{-1} \\
& = & \mathbf r_1 \exp(\zeta_1
e_{\gamma_1})\widetilde u \mathbf r_1^{-1},\quad (\text{for some}
\quad \widetilde u\in N^+\cap r_1 N^+ r_1^{-1}) \\
& = & \exp(\zeta_1
f_{\gamma_1})\mathbf u, \quad (\text{for some} \quad \mathbf u\in
N^+).
\end{eqnarray*}
Insert this calculation into (\ref{loopn=2case}). We then see
that $g^{(2)}$ has a triangular factorization $g^{(2)}=l^{(2)}a^{(2)}u^{(2)}$, where
\[
a^{(2)}=
\mathbf a(\zeta_1)^{h_{\tau_1}}\mathbf a(\zeta_2)^{h_{\tau_2}}
\]
and
\begin{equation}\label{2ndcase}
l^{(2)}=\exp(\zeta_2
f_{\tau_2})\exp(\zeta_1\mathbf
a(\zeta_2)^{-\tau_1(h_{\tau_2})}f_{\tau_1})\end{equation}
$$=\exp(\zeta_2
f_{\tau_2}+\zeta_1\mathbf
a(\zeta_2)^{-\tau_1(h_{\tau_2})}f_{\tau_1})$$ (the last equality
holds because a two dimensional nilpotent algebra is necessarily
commutative).

To apply induction, we assume that $g^{(n-1)}$ has a triangular
factorization $g^{(n-1)}=l^{(n-1)}a^{(n-1)}u^{(n-1)}$ with
\begin{equation}\label{induction}l^{(n-1)}=\exp(\zeta_{n-1}
f_{\tau_{n-1}})\widetilde l \in N^-\cap
w_{n-1}^{-1}N^+w_{n-1}=\exp(\sum_{j=1}^{n-1}\mathbb C
f_{\tau_j}),\end{equation} for some $\widetilde l \in N^-\cap
w_{n-2}^{-1}N^+w_{n-2}=\exp(\sum_{j=1}^{n-2}\mathbb C f_{\tau_j})$,
and
\[
a^{(n-1)}= \prod_{j=1}^{n-1}\mathbf a(\zeta_j)^{h_{\tau_j}}.
\]
We have established this for $n-1=1,2$. For $n \ge 2$
\begin{eqnarray*}
g^{(n)} & = &\exp(\zeta_n f_{\tau_n})\mathbf a(\zeta_n)^{h_{\tau_n}}\mathbf w_{n-1}^{-1}
\exp(\pm\bar{\zeta}_n e_{\gamma_n})\mathbf w_{n-1}\exp(\zeta_{n-1}
f_{\tau_{n-1}})\widetilde l a(g^{(n-1)})u(g^{(n-1)})
\\
& = &\exp(\zeta_n f_{\tau_n})\mathbf a(\zeta_n)^{h_{\tau_n}}\mathbf w_{n-1}^{-1}
\exp(\pm\bar{\zeta}_ne_{\gamma_n}) \widetilde u\mathbf
w_{n-1}a(g^{(n-1)})u(g^{(n-1)}),
\end{eqnarray*}
where $\widetilde u= \mathbf
w_{n-1}\exp(\zeta_{n-1} f_{\tau_{n-1}})\widetilde l \mathbf
w_{n-1}^{-1}\in \mathbf w_{n-1} N^{-} \mathbf w_{n-1}^{-1} \cap
N^+$. Now write
\[
\exp(\pm\bar{\zeta}_ne_{\gamma_n}) \widetilde u=\widetilde u_1\widetilde u_2,
\] relative to the decomposition
\[
N^+=\left(N^+\cap w_{n-1}N^-w_{n-1}^{-1}\right)\left(N^+\cap
w_{n-1}N^+w_{n-1}^{-1}\right).
\]
Let
\[
\mathbf l=\mathbf a(\zeta_n)^{h_{\tau_n}}\mathbf
w_{n-1}^{-1}\widetilde u_1 \mathbf w_{n-1}\mathbf
a(\zeta_n)^{-h_{\tau_n}}\in N^- \cap \mathbf w_{n-1}^{-1} N^+
\mathbf w_{n-1}.
\]
Then $g^{(n)}$ has triangular decomposition
\[
g^{(n)}=\left(\exp(\zeta_n f_{\tau_n})\mathbf
l\right) \left(\mathbf
a(\zeta_n)^{h_{\tau_n}}a^{(n-1)}\right)\left(
(a^{(n-1)})^{-1}\widetilde u_2a^{(n-1)}u^{(n-1)}\right).
\]
This implies the induction step.

This calculation shows that (II.2) implies (II.3). It also implies the product formula
for (\ref{productformula}) $a_2$.

\begin{remark}\label{proofofconjecture1} In reference to Conjecture \ref{conjecture1}, we observe that the preceding calculation shows that
we have a map (using the notation we have established above)
\begin{equation}\label{lowerstuff}
\{(\zeta_j):j=1,..,n\}\to \exp(\oplus_{j=1}^n\mathbb C f_{\tau_j}):(\zeta_j)\to l(g^{(n)})
\end{equation}
where $\zeta_j$ ranges over either the complex plane or a disk, depending on whether the $j$th root is of compact or noncompact type. The calculation also show that the map is 1-1 and open. We claim that the image
of this map
is closed in
\[
\{l_2\in \exp(\oplus_{j=1}^n\mathbb C f_{\tau_j}): \exists \quad\widetilde g_2 \text{ having triangular factorization } \widetilde g_2=l_2a_2u_2\}.
\]
This follows from the product formula for $a_2$, which shows that as the parameters tend to the boundary,
the triangular factorization fails. This implies that the image of the map is the connected component which
contains $l_2=1$. This does prove the implication (II.2) implies (II.3) in Theorem \ref{SU(1,1)theorem1}, because $n$ is fixed in
the statement of that theorem, but this does not
complete the proof of Conjecture \ref{conjecture1}. The difficulty is that we do not know how to formulate statements (I.1) and (II.1)
in the general case in a way that regards $n$ as fixed.
\end{remark}

It is obvious that (II.3) implies (II.1). In fact (II.3) implies a
stronger condition. If (II.3) holds, then given a highest weight
vector $v$ as in (II.1), corresponding to highest weight
$\dot{\Lambda}$, then
\begin{equation}\label{highestweight}
\pi(g_2^{-1})v=\pi(u_2^{-1}a_2^{-1}l^{-1})v=a_2^{-\dot{\Lambda}}\pi(u_2^{-1})v,\end{equation}
implying that $\pi(g_2^{-1})v$ is holomorphic in $\Delta$ and
nonvanishing at all points. However we do not need to include this
nonvanishing condition in (II.1), in this finite case.

It remains to prove that (II.1) implies (II.3). Because
$\widetilde g_2$ is determined by $g_2$, as in Lemma
\ref{notationlemma}, it suffices to show that $g_2$ has a
triangular factorization (with trivial $\dot T$ component). Hence
we will slightly abuse notation and work at the level of loops in
the remainder of this proof.

To motivate the argument, suppose that $g_2$ has triangular
factorization as in (II.3). Because $u_2(0)\in \dot{N}^+$, there
exists a pointwise $\dot{G}$-triangular factorization
\begin{equation}\label{pointwise}
u_2(z)^{-1}=
\dot{l}(u_2(z)^{-1})\dot{d}(u_2(z)^{-1})\dot{u}(u_2(z)^{-1})
\end{equation}
which is certainly valid in a neighborhood of $z=0$; more
precisely, (\ref{pointwise}) exists at a point $z\in \mathbb C$ if
and only if
\[
\dot{\sigma}_i(u_2(z)^{-1})\ne 0, \quad i=1,..,r.
\]
When (\ref{pointwise}) exists (and using the fact that $g_2$ is
defined on $\mathbb C^*$ in this algebraic context),
\[
g_2(z)=\left (l_2(z)a_2 \dot{u}(u_2(z)^{-1})^{-1} a_2^{-1} \right)
\left ( a_2 \dot{d}(u_2(z)^{-1})^{-1} \right )
\dot{l}(u_2(z)^{-1})^{-1}.
\]
This implies
\begin{equation}\label{k2pointwise}
g_2(z)^{-1}=\dot{l}(u_2(z)^{-1})\left (
\dot{d}(u_2(z)^{-1})a_2^{-1} \right ) \left (a_2
\dot{u}(u_2(z)^{-1}) a_2^{-1} l_2(z)^{-1}\right).
\end{equation}
This is a pointwise $\dot{G}$-triangular factorization of
$g_2^{-1}$, which is certainly valid in a punctured neighborhood
of $z=0$. The important facts are that (1) the first factor in
(\ref{k2pointwise})
\begin{equation}\label{removable}\dot{l}(g_2^{-1})=\dot{l}(u_2(z)^{-1})\end{equation}
does not have a pole at $z=0$; (2) for the third (upper
triangular) factor in (\ref{k2pointwise}), the factorization
\begin{equation}\label{unobstructed}\dot{u}(g_2^{-1}) ^{-1}= l_2(z) \left
(a_2\dot{u}(u_2(z)^{-1})a_2^{-1} \right )\end{equation} is a
$L\dot{G}$-triangular factorization of $\dot{u}(g_2^{-1})^{-1}\in
L\dot{N}^+$, where we view $\dot{u}(g_2^{-1})^{-1}$ as a loop by
restricting to a small circle surrounding $z=0$; and (3) because
there is an a priori formula for $a_2$ in terms of $g_2$ (see
(\ref{loopdiagonal})), we can recover $l_2$ and (the pointwise
triangular factorization for) $u_2^{-1}$ from
(\ref{k2pointwise})-(\ref{unobstructed}): $l_2=l(\dot
u(g_2^{-1})^{-1})$ (by (\ref{unobstructed})), and
\begin{equation}\label{backwards}\dot l(u_2(z)^{-1})=\dot l( g_2(z)^{-1}),
\quad \dot d(u_2(z)^{-1})=\dot d(g_2(z)^{-1})a_2,
\end{equation}
$$ \text{and} \quad \dot u(u_2(z)^{-1})=a_2^{-1}u(\dot
u(g_2(z)^{-1}))a_2.$$ We remark that this uses the fact that $g_2$
is defined in $\mathbb C^*$ in an essential way.

Now suppose that (II.1) holds. In particular (II.1) implies that
$\dot{\sigma}_i(g_2^{-1})$ has a removable singularity at $z=0$
and is positive at $z=0$, for $i=1,..,r$. Thus $g_2^{-1}$ has a
pointwise $\dot{G}$-triangular factorization as in
(\ref{k2pointwise}), for all $z$ in some punctured neighborhood of
$z=0$.

We claim that (\ref{removable}) does not have at pole at $z=0$. To
see this, recall that for an $n\times n$ matrix $g=(g_{ij})$
having an LDU factorization, the entries of the factors can be
written explicitly as ratios of determinants:
$$
\dot{d}(g)=\mathrm{diag}(\sigma_1,\sigma_2/\sigma_1,\sigma_3/\sigma_2,..,\sigma_
n/\sigma_{n-1})
$$
where $\sigma_k$ is the determinant of the
$k^{th}$ principal submatrix, $\sigma_k=\det((g_{ij})_{1\le i,j\le
k})$; for $i>j$,
\begin{equation}\label{lower}l_{ij}=\det\begin{pmatrix} g_{11}&g_{12}&..&g_{1j}\\
g_{21}&&&\\
.\\
.\\
g_{j-1,1}&&&g_{j-1,j}\\
g_{i,1}&&&g_{ij}\end{pmatrix}/\sigma_j=\frac {\langle
g\epsilon_ 1\wedge ..\wedge\epsilon_j,\epsilon_1\wedge
..\wedge\epsilon_{j-1} \wedge\epsilon_i\rangle}{\langle
g\epsilon_1\wedge ..\wedge\epsilon_ j,\epsilon_1\wedge
..\wedge\epsilon_j\rangle}\end{equation}
and for $i<j$,
$$u_{ij}=\det\begin{pmatrix} g_{11}&g_{12}&..&g_{1,i-1}&g_{1,j}\\
.&&&&g_{2,j}\\
.\\
g_{i,1}&&&&g_{i,j}\end{pmatrix}/\sigma_i.$$ Apply this to
$g=g_2^{-1}$ in a highest weight representation. Then
(\ref{lower}), together with (II.1), implies the claim.

The factorization (\ref{unobstructed}) is unobstructed. Thus it
exists. We can now read the calculation backwards, as in
(\ref{backwards}), and obtain a triangular factorization for $g_2$
as in (II.3) (initially for the restriction to a small circle
about $0$; but because $g_2$ is of finite type, this is valid also
for the standard circle). This completes the proof.
\end{proof}

In the $C^{\infty}$ analogue of Theorem \ref{Gtheorem1}, it is
necessary to add further hypotheses in parts I.1 and II.1; see
(\ref{highestweight}). To reiterate, we are now assuming that the
sequence $\{r_j\}_{j=1}^\infty$ is affine periodic.

\begin{theorem}\label{Gtheorem1smooth} Suppose that $\widetilde g_1 \in \widetilde L \dot{G}_0$
and $\Pi(\widetilde g_1)=g_1$. Consider the following three statements:
\begin{enumerate}
\item[(I.1)] $m(\widetilde g_1)=1$, and for each complex irreducible
representation $V(\pi)$ for $\dot{G}$, with lowest weight vector
$\phi \in V(\pi)$, $\pi(g_1)^{-1}(\phi)$ has holomorphic extension
to $\Delta$, is nonzero at all $z\in \Delta$, and is a positive
multiple of $v$ at $z=0$.
\item[(I.2)] $\widetilde g_1$ has a factorization of the form
$$\widetilde g_1=\lim_{n\to\infty}i_{\tau'_n}(g(\eta_n))..i_{\tau'_{-N}}(g(\eta_{-N})),$$
where $g(\eta_j)=k(\eta_j)$ for some $\eta_j\in\C$ (resp. $g(\eta_j)=q(\eta_j)$ for some $\eta_j\in\Delta$) when $\tau_j$ is a compact type (resp. non-compact type) root and the sequence $(\eta_j)_{j=-N}^\infty$ is rapidly decreasing.
\item[(I.3)] $\widetilde g_1$ has triangular factorization of the form
$\widetilde g_1=l_1a_1u_1$ where $l_1\in H^0(\Delta^*,\dot N^-)$
has smooth boundary values.
\end{enumerate}
Then statements (I.1) and (I.3) are equivalent. (I.2) implies (I.1) and (I.3).

Moreover, in the notation of (I.2),
\begin{equation}\label{product01}a_1=\prod_{j=-N}^{\infty}\mathbf a(\eta_j)^{h_{\tau^{\prime}_j}}.\end{equation}

Similarly, suppose that $\widetilde{g}_2 \in \widetilde{L}\dot{G}_0$
and $\Pi(\widetilde{g}_2)=g_2$. Consider the following three statements:
\begin{enumerate}
\item[(II.1)] $m(\widetilde g_2)=1$; and for each complex irreducible
representation $V(\pi)$ for $\dot{G}$, with highest weight vector
$v \in V(\pi)$, $\pi(g_2)^{-1}(v)\in H^0(\Delta;V)$ has
holomorphic extension to $\Delta$, is nonzero at all $z\in
\Delta$, and is a positive multiple of $v$ at $z=0$.

\item[(II.2)] $\widetilde g_2$ has a factorization of the form
\[
\widetilde g_2=\lim_{n\to\infty}i_{\tau_n}(g(\zeta_n))..i_{\tau_1}(g(\zeta_1))
\]
where $g(\zeta_j)=k(\zeta_j)$ for some $\zeta_j\in\C$ (resp. $g(\zeta_j)=q(\zeta_j)$ for some $\zeta_j\in\Delta$) when $\tau_j$ is a compact type (resp. non-compact type) root and the sequence $(\zeta_j)_{j=1}^\infty$ is rapidly decreasing.
\item[(II.3)] $\widetilde g_2$ has triangular factorization of the form
$\widetilde g_2=l_2a_2u_2$, where $l_2 \in
H^0(\Delta^*,\infty;\dot N^+,1)$ has smooth boundary values.
\end{enumerate}
Then statements (II.1) and (II.3) are equivalent. (II.2) implies (II.1) and (II.3).

Also, in the notation of (II.2),
\begin{equation}\label{product2}
a_2=\prod_{j=1}^{\infty}\mathbf a(\zeta_j)^{h_{\tau_j}}.
\end{equation}
\end{theorem}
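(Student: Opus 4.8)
The plan is to prove Theorem \ref{Gtheorem1smooth} as the $C^\infty$ analogue of the finite-type Theorem \ref{Gtheorem1}, concentrating on the second set of statements (the first being proven identically, \emph{mutatis mutandis}). The implications (II.2)$\Rightarrow$(II.3) and (II.2)$\Rightarrow$(II.1) should go through essentially by passing to the limit $n\to\infty$ in the finite-type arguments, so the genuine content lies in establishing (II.1)$\Leftrightarrow$(II.3) and in controlling the convergence of the infinite root subgroup product. I would organize the proof around these three points, handling convergence first, then the equivalence.

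First I would establish that, under the rapid-decrease hypothesis on $(\zeta_j)_{j=1}^\infty$, the infinite product $\lim_{n\to\infty}i_{\tau_n}(g(\zeta_n))\cdots i_{\tau_1}(g(\zeta_1))$ converges in $\widetilde{L}\dot G_0$ with smooth limit. The inductive formula for $l^{(n)}$ from the proof of Theorem \ref{Gtheorem1}, namely that $l^{(n)}=\exp(\zeta_n f_{\tau_n})\,\widetilde{l}$ lives in $\exp(\sum_{j=1}^n\mathbb C f_{\tau_j})$, together with the product formula $a^{(n)}=\prod_{j=1}^n\mathbf a(\zeta_j)^{h_{\tau_j}}$, gives explicit control of each factor; affine periodicity of $\{r_j\}$ ensures the roots $\tau_j$ repeat with bounded $z$-degree growth, so rapid decrease of the $\zeta_j$ forces the Fourier coefficients of the partial products to converge in every $C^k$ norm. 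This simultaneously yields the product formula (\ref{product2}) for $a_2$ in the limit. The analytic estimates here are the same in spirit as those in the compact case of \cite{PP}, and I would cite that structure while noting the modification that noncompact-type factors use $\mathbf a_-$ and a disk parameter.

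Next I would prove (II.3)$\Rightarrow$(II.1): given a triangular factorization $\widetilde g_2=l_2a_2u_2$ with $l_2\in H^0(\Delta^*,\infty;\dot N^+,1)$, the computation (\ref{highestweight}) shows directly that for a highest weight vector $v$, $\pi(g_2^{-1})v=a_2^{-\dot\Lambda}\pi(u_2^{-1})v$, which extends holomorphically to $\Delta$, is nonvanishing there (since $u_2$ takes values in the unipotent group and $a_2>0$), and equals a positive multiple of $v$ at $z=0$. Crucially, in the smooth case the nonvanishing condition is not automatic, which is exactly why (II.1) must now include ``nonzero at all $z\in\Delta$'' as a hypothesis, in contrast to the finite-type theorem.

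The main obstacle, and the hardest step, is (II.1)$\Rightarrow$(II.3). I would follow the architecture of the finite-type proof: assume (II.1), use the positivity and nonvanishing of $\dot\sigma_i(g_2^{-1})$ to build the pointwise $\dot G$-triangular factorization (\ref{k2pointwise}) of $g_2^{-1}$ on a punctured neighborhood of $z=0$, then invoke the determinant formula (\ref{lower}) to show the lower factor $\dot l(g_2^{-1})=\dot l(u_2^{-1})$ has a removable singularity, and finally read the calculation backwards via (\ref{backwards}) to recover a bona fide triangular factorization of $g_2$. The difficulty particular to the smooth setting is that $g_2$ is no longer defined as a rational loop on all of $\mathbb C^*$, so the ``finite type'' step that let us pass from a small circle to the standard circle is unavailable; here one must instead use the full strength of the nonvanishing hypothesis in (II.1) across all of $\Delta$ to guarantee that the pointwise factorization extends holomorphically to the entire disk, and then appeal to the smooth Riemann--Hilbert/Birkhoff theory (as in Chapter 8 of \cite{PS}) to promote the holomorphic factors to genuine loop-group factors $l_2\in H^0(\Delta^*,\infty;\dot N^+,1)$ and $u_2$ with smooth boundary values. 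I expect controlling this global holomorphic extension and boundary regularity to be where the real work lies.
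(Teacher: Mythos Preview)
Your plan for the convergence of the infinite product and for (II.3)$\Rightarrow$(II.1) is sound and close to what the paper does; the paper's smoothness argument proceeds by directly estimating the Sobolev norm of $g_2^{-1}\partial_\theta g_2$ term by term (rather than via the $l^{(n)}$-induction), but the role of affine periodicity in making $q(n)\sim n$ and the conclusion that rapid decrease of $(\zeta_j)$ gives a $C^\infty$ limit are exactly as you describe.

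The gap is in your proposed proof of (II.1)$\Rightarrow$(II.3). You propose to carry over the finite-type architecture built around the pointwise $\dot G$-triangular factorization (\ref{k2pointwise}) of $g_2^{-1}$ on a punctured neighborhood of $z=0$, then patch and extend using the nonvanishing hypothesis. The problem is that in the smooth setting $g_2$ is defined only on $S^1$; there is no meaning to ``the pointwise factorization of $g_2(z)^{-1}$ for $z$ near $0$,'' and the identities (\ref{removable})--(\ref{backwards}) cannot even be written down. The paper itself flags that the finite-type argument ``uses the fact that $g_2$ is defined in $\mathbb C^*$ in an essential way,'' so this is not a technicality one can smooth over by invoking Chapter~8 of \cite{PS}: the Pressley--Segal criterion reduces the existence of a Birkhoff factorization to the invertibility of the Toeplitz operator $A(g_2)$, which is exactly what still has to be proved.

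The paper's argument for (II.1)$\Rightarrow$(II.3) is therefore quite different from the finite-type one. It fixes a faithful representation $\dot G_0\hookrightarrow SL(n,\mathbb C)$, lets $C_1,\dots,C_n$ be the columns of $g_2^{-1}$ and $C_1^*,\dots,C_n^*$ the rows of $g_2$ (a dual basis), and shows directly that $\ker A(g_2)=0$. The mechanism is: if $f\in\mathcal H^+$ with $(C_j^*f)_+=0$ for all $j$, then since (II.1) forces $C_n^*$ (dual representation) to extend holomorphically to $\Delta$, one gets $C_n^*f\equiv 0$ on $S^1$, so $f$ is a pointwise combination of $C_1,\dots,C_{n-1}$; wedging $f$ with $C_1\wedge\cdots\wedge C_{n-2}$ and using that (II.1) makes each $C_1\wedge\cdots\wedge C_j$ extend holomorphically and nonvanishingly to $\Delta$ (via $\bigwedge^j\mathbb C^n$) forces the top coefficient to be holomorphic, hence zero by the $(n-1)$st equation; iterating kills $f$. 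This wedge-product peeling, together with index zero, gives invertibility of $A(g_2)$ and hence (II.3). Your outline is missing this idea.
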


\begin{conjecture}\label{conjecture2}
If $g_1$ is in the
identity connected component of the sets in (I.1) and (I.3), then the converse holds, i.e. $g_1$ has a root subgroup
factorization as in (I.2).  If $g_2$ is in the
identity connected component of the sets in (II.1) and (II.3), then the converse holds, i.e. $g_2$ has a root subgroup
factorization as in (II.2).
\end{conjecture}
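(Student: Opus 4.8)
The plan is to prove the statement about $g_2$ (the statement about $g_1$ being entirely parallel, with highest weights replaced by lowest weights and the sequence $\{\tau_j\}$ by $\{\tau'_j\}$), and to carry the open-plus-closed argument of Remark \ref{proofofconjecture1}, which settles the fixed-$n$ case, into the infinite-dimensional smooth setting. Write $\mathcal Z$ for the space of rapidly decreasing sequences $(\zeta_j)_{j=1}^\infty$ with $\zeta_j\in\C$ when $\tau_j$ is of compact type and $\zeta_j\in\Delta$ when $\tau_j$ is of noncompact type, and let
\[
\Phi\colon \mathcal Z\to \widetilde{L}\dot{G}_0,\qquad \Phi((\zeta_j))=\lim_{n\to\infty} i_{\tau_n}(g(\zeta_n))\cdots i_{\tau_1}(g(\zeta_1))
\]
be the root subgroup product of (II.2). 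By Theorem \ref{Gtheorem1smooth} the image of $\Phi$ lies in the set $\mathcal S$ of $\widetilde g_2$ satisfying (II.1), equivalently (II.3), and it contains the identity (all $\zeta_j=0$). The objective is to show that $\Phi(\mathcal Z)$ is both open and closed in the identity component $\mathcal S_{(0)}$ of $\mathcal S$, so that $\Phi(\mathcal Z)=\mathcal S_{(0)}$, which is exactly the asserted converse.

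First I would establish openness and injectivity. As in Theorems 4.1 and 4.2 of \cite{PP}, one shows that $\Phi$ is a well-defined real-analytic map onto its image and that rapid decrease of $(\zeta_j)$ corresponds to smoothness of $\widetilde g_2$ and of its triangular factors $l_2,a_2,u_2$. The inductive computation in the proofs of Theorems \ref{Gtheorem1} and \ref{Gtheorem1smooth} expresses $l(g^{(n)})$ as an exponential of a triangular combination of the $f_{\tau_j}$ whose coordinate along $f_{\tau_n}$ is $\zeta_n$ (up to the precise $\mathbf a(\zeta_i)$-exponents recorded there, cf. (\ref{2ndcase})). Thus, in the basis $\{f_{\tau_j}\}$, the map $(\zeta_j)\mapsto l(\Phi((\zeta_j)))$ has triangular derivative with nonvanishing diagonal at every point, so $\Phi$ is a local homeomorphism onto its image; injectivity follows by reading the factorization backwards (recover $\zeta_n$ from the leading coordinate of $l_2$, peel off $i_{\tau_n}(g(\zeta_n))$, and iterate). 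Hence $\Phi(\mathcal Z)$ is open in $\mathcal S_{(0)}$.

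Next I would prove closedness, for which the product formula (\ref{product2}), $a_2=\prod_{j=1}^\infty \mathbf a(\zeta_j)^{h_{\tau_j}}$, is the decisive tool, exactly as for fixed $n$ in Remark \ref{proofofconjecture1}. Suppose $\Phi((\zeta_j^{(m)}))\to \widetilde g_2^\infty\in \mathcal S_{(0)}$ as $m\to\infty$, with the triangular factors converging in the smooth topology. Then the diagonal factors $a_2^{(m)}$ converge in $A$ to some $a_2^\infty$. Evaluating the product formula against the fundamental weights, and using that $\mathbf a_-(\zeta)\to\infty$ as $|\zeta|\to 1$ while $\mathbf a_+$ stays bounded, the convergence of $a_2^{(m)}$ forces, for every noncompact-type index $j$, that $\zeta_j^{(m)}$ remains in a compact subset of $\Delta$ bounded away from the unit circle, and forces the tails $(\zeta_j^{(m)})_{j\ge J}$ to be uniformly small. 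A diagonal extraction then produces a limit $(\zeta_j^\infty)\in\mathcal Z$, and continuity of $\Phi$ gives $\Phi((\zeta_j^\infty))=\widetilde g_2^\infty$. Thus $\Phi(\mathcal Z)$ is closed in $\mathcal S_{(0)}$; being open, closed, and nonempty, it equals $\mathcal S_{(0)}$.

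The hard part — and the reason this remains a conjecture — is twofold, precisely the gaps flagged in Remark \ref{proofofconjecture1}. First, the analytic equivalence \emph{rapid decrease $\Leftrightarrow$ smoothness of $\widetilde g_2$} is delicate in the noncompact setting: the noncompact-type blocks $q(\zeta)$ blow up as $|\zeta|\to 1$, unlike the uniformly bounded compact-type blocks $k(\zeta)$, so the estimates of \cite{PP} controlling the infinite product and its factors do not transfer verbatim, and one must rule out cancellation that could keep $\widetilde g_2$ smooth while $(\zeta_j)$ fails to decrease rapidly. Second, and more seriously, the dichotomy above only shows that $\Phi(\mathcal Z)$ is a union of connected components of $\mathcal S$; pinning it to the component of the identity requires an intrinsic description of $\mathcal S_{(0)}$, which we lack. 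Indeed the $n=2$ example following Theorem \ref{SU(1,1)theorem1} shows that $\mathcal S$ is genuinely disconnected, with points outside the image of $\Phi$ (the case where numerator and denominator of $a_2^2$ are both negative). Proving that $\mathcal S_{(0)}$ is connected and coincides with $\Phi(\mathcal Z)$, rather than being a proper open-and-closed piece, is the essential obstacle; absent that, the argument yields only the weaker conclusion that $\Phi(\mathcal Z)$ is open and closed in $\mathcal S$.
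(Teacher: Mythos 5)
You should note first that the statement you set out to prove is, in the paper, an open \emph{conjecture}: the authors prove neither direction of the converse, explicitly say they are ``not as confident in the truth'' of it, and offer only the ``naive argument'' of Remark \ref{proofofconjecture2}. That sketch takes a different route from yours: it puts global linear coordinates $x_j^*$ on the set $X$ of admissible $g_2$ via $l_2=\exp(\sum_{j\ge 1} x_j^* f_{\tau_j})$, computes the formal derivative (\ref{derivativeform}) of $\zeta\mapsto g_2$, argues injectivity and closed image of the derivative to get a local inverse, and then globalizes using the fact that the inverse is determined by its values on finite sequences $x^*$ --- with the acknowledged technical debt that the inverse function theorem must really be run on the Sobolev scale $w^n\to X^n$. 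Your proposal instead lifts the open-plus-closed argument of Remark \ref{proofofconjecture1} from fixed $n$ to the infinite smooth product, and your two flagged obstructions are exactly the right ones: the rapid-decrease/smoothness correspondence from \cite{PP} does not transfer verbatim because the noncompact blocks $q(\zeta)$ blow up as $|\zeta|\to 1$, and open-and-closed only identifies $\Phi(\mathcal Z)$ with a \emph{union} of connected components, while no intrinsic characterization of the identity component of the set cut out by (II.1)/(II.3) is available --- the same difficulty that prevented the authors from promoting Remark \ref{proofofconjecture1} to a proof of Conjecture \ref{conjecture1}.

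Beyond what you flag, there is a genuine gap in your openness step: ``triangular derivative with nonvanishing diagonal, hence local homeomorphism'' is not valid here. The parameter space is a Fr\'echet space of rapidly decreasing sequences, where a continuous linear bijection with lower-triangular matrix need not have a continuous inverse, and the inverse function theorem fails absent tame estimates (Nash--Moser) or a careful passage through the Sobolev completions --- precisely the analysis that Remark \ref{proofofconjecture2} suppresses and that keeps the conjecture open. Your closedness step has a parallel soft spot: convergence of $a_2^{(m)}$ via the product formula (\ref{product2}) does keep the noncompact parameters in compact subsets of $\Delta$ and bounds weighted $\ell^2$-type tails $\sum_j j\,|\zeta_j^{(m)}|^2$, but it does not by itself force the diagonal limit $(\zeta_j^\infty)$ to be \emph{rapidly} decreasing; that again requires the unproven equivalence between rapid decrease of $\zeta$ and smoothness of $\widetilde g_2$. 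So your proposal should be read as a plausible reduction of the conjecture to these named analytic problems, not as a proof --- which is consistent with the paper, where the statement remains open; if the analytic issues were resolved, the paper's inverse-function route would yield the global bijection directly, whereas yours would still need the connectedness of the target component as an additional input.
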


In Remark \ref{proofofconjecture2}, at the end of the following proof, we will indicate how we envision proving this conjecture.
The issue in this $C^{\infty}$ context involves analysis, and we are not as confident in the truth of this Conjecture \ref{conjecture2}.

\begin{proof} The two sets of equivalences and implications are proven in the same way.
We consider the second set.

Suppose that (II.1) holds. To show that (II.3) holds, it suffices
to prove that $g_2$ has a triangular factorization with $l_2$ of
the prescribed form (see Lemma \ref{notationlemma}). By working in
a fixed faithful highest weight representation for $\dot{\mathfrak
g}$, without loss of generality, we can suppose $\dot G_0$ is a matrix subgroup
of $\SL(n,\mathbb C)$ (where $\dot {\mathfrak n}_+$ consists of upper triangular matrices). We will assume that this representation is
the complexified adjoint representation, or some subrepresentation of the exterior algebra of the adjoint representation, so
that we can suppose that $\dot G_0$ fixes a (indefinite) Hermitian form (in the case of the adjoint representation, this
is derived from the Killing form).

For the purposes of this proof, we will use the terminology in
Section 1 of \cite{P3}. We view $g_2\in L\G_0$ as a
multiplication operator on the Hilbert space $\mathcal
H=L^2(S^1;\mathbb C^n)$, and we write
$$M_{g_2}=\begin{pmatrix}A(g_2)&B(g_2)\\C(g_2)&D(g_2)\end{pmatrix}$$
relative to the Hardy polarization $\mathcal H=\mathcal H^+\oplus
\mathcal H^-$, where $A(g_2)$ is the compression of $M_{g_2}$ to
$\mathcal H^+$, the subspace of functions in $\mathcal H$ with
holomorphic extension to $\Delta$. To show that $g_2$ has a
Birkhoff factorization, we must show that $A(g_2)$ is invertible
(see Theorem 1.1 of \cite{P3}).

Let $C_1,..,C_n$ denote the columns of $g_2^{-1}$, and let $C_1^*,..,C_n^*$ denote the rows
of $g_2$. We can regard these as dual bases with respect to the pairing given by matrix multiplication, i.e.,
$C_i^*C_j=\delta_{ij}$.

The hypothesis of
(II.1) implies that both $C_1$ and $C_n^*$ have holomorphic extensions to $\Delta$ (in the latter case, by considering
the dual representation). Now suppose that $f\in \mathcal H^+$ is
in the kernel of $A(g_2)$. Then
\begin{equation}\label{keyfact}(C_j^*f)_+=0, \quad j=1,..,n,\end{equation}
where $(\cdot)_+$ denotes projection to $\mathcal H^+$. Since $C_n^*$
has holomorphic extension to $\Delta$, $(C_n^*f)_+=C_n^*f$ and therefore $C_n^*f$ is
identically zero on $S^1$ by (\ref{keyfact}). This implies that for $z\in S^1$,
$f(z)$ is a linear combination of the $n-1$ columns $C_j(z)$,
$j<n$. We write
\[
f=\lambda_1C_1+..+\lambda_{n-1} C_{n-1}
\]
where the coefficients are functions on the circle (defined a.e.).
Now consider the pointwise wedge product of $\mathbb C^n$ vectors
\[
f\wedge C_1\wedge..\wedge C_{n-2}=\pm
\lambda_{n-1}C_1\wedge..\wedge C_{n-1}.
\]
The vectors $C_1\wedge..\wedge C_{j}$ extend holomorphically to $\Delta$, and
never vanish, for any $j$, by (II.1) (by considering the
representation $\bigwedge^j(\mathbb C^n)$). Since $f$ also extends
holomorphically, this implies that $\lambda_{n-1}$ has holomorphic
extension to $\Delta$. Now
\[
C_{n-1}^*f=\lambda_{n-1}C_{n-1}^*C_{n-1}=\lambda_{n-1}
\]
by (\ref{keyfact}) and duality.

Since the right hand side
is holomorphic in $\Delta$, by (\ref{keyfact}) (for $j=n-1$)
$\lambda_{n-1}$ vanishes identically. This implies that in fact
$f$ is a (pointwise) linear combination of the first $n-2$ columns
of $g_2^{-1}$. Continuing the argument in the obvious way (by next
wedging $f$ with $C_1\wedge..\wedge C_{n-3}$ to conclude that
$\lambda_{n-2}$ must vanish), we conclude that $f$ is zero. This
implies that $\ker(A(g_2))=0$. Since $\dot G$ is simply connected,
$A(g_2)$ has index zero. Hence $A(g_2)$ is invertible. This
implies (II.3).

It is obvious that (II.3) implies (II.1); see
(\ref{highestweight}). Thus (II.1) and (II.3) are equivalent.

Before showing that (II.2) implies (II.1) and (II.3), we
need to explain why the $C^{\infty}$ limit in (II.2) exists. Because
$g(\zeta_j)=1+O(\vert\zeta_j\vert)$ as $\zeta_j \to 0$, the
condition for the product in (II.2) to converge absolutely is that
$\sum \zeta_n$ converges absolutely. So $g_2$ certainly represents
a continuous loop. The harder task is to check smoothness.

We will now calculate the derivative formally. In this
calculation, we let $g_2^{(n)}$ denote the product up to $n$, and
$\tau_n=q(n)d^*-\dot{\alpha}(n)$ ($q(n)>0$, and
$\dot{\alpha}(n)>0$). Then
\begin{equation}\label{formal}
g_2^{-1}(\frac{\partial g_2}{\partial\theta})
=\Pi(\sum_{n=1}^{\infty}Ad(g^{(n-1)})^{-1}\left(\iota_{\tau_n}(g(\zeta_n))^{-1}
\frac{\partial}{\partial\theta}\iota_{\tau_n}(g(\zeta_n))\right))
\end{equation}
$$=\sum_{n=1}^{\infty}Ad(g^{(n-1)})^{-1}\left(\sqrt{-1}
\frac{q(n)}{1\pm\vert\zeta_n\vert^2}(\mp \vert\zeta_n\vert^2h_{\dot{\alpha}(n)}-
\zeta_n e_{\dot{\alpha}(n)}z^{-q(n)}\mp
\bar{\zeta}_nf_{\dot{\alpha}(n)}z^{q(n)})\right).$$
Because we are
using an affine periodic sequence of simple reflections (with
period $w_l^{-1}\in C\subset \dot h_{\mathbb R}$),
$\tau_{l+1}=w_l^{-1}\cdot \tau_1$, $\tau_{l+2}=w_l^{-1}\tau_2$,
and so on. In general, writing $\tau_j=k(j)d^*-\dot{\alpha}(j)$ as
above, and using Proposition (4.9.5) of \cite{PS} to calculate the
coadjoint action,
\begin{equation}\label{taueqn}\tau_{nl+j}=w_l^{-n}\cdot
\tau_j=(q(j)+n\dot{\alpha}(j)(w_l))d^*-\dot{\alpha}(j).\end{equation}
Because $\dot{\alpha}(w_l)>0$, for all $\dot{\alpha}>0$, it
follows that $q(n)$ is asymptotically $n$. Because $Ad(g^{(n-1)})$
is orthogonal, (\ref{formal}) implies that
\[
\int\vert g_2^{-1}(\frac{\partial g_2}{\partial\theta})\vert^2
d\theta \le
\sum_{n=1}^{\infty}\|\mathrm{Ad}(g^{(n-1)})\|_2^2\frac{q(n)^2}{(1\pm\vert\zeta_n\vert^2)^2}(\vert\zeta_n\vert^4+\vert\zeta_n\vert^2)\vert
h_{\dot{\alpha}(n)}\vert^2
\]
by Bessel's inequality.  This is comparable to $\sum_{n=1}^\infty
n^2\vert\zeta_n\vert^2$ because $\|\mathrm{Ad}(g^{(n-1)})\|_2^2$ is uniformly bounded in $n$. Thus $g_2$ is $W^{1}$ (the $L^2$ Sobolev
space) whenever $(\zeta_j)\in w^1$. Higher derivatives can be
similarly calculated. This shows that if $\zeta\in w^n$, then
$g_2\in W^n$. Hence if $\zeta\in c^{\infty}$, the Frechet space of
rapidly decreasing sequences, then $g_2\in C^{\infty}$.

Now suppose that (II.2) holds. The map from $\zeta$ to $\widetilde
g_2$ is continuous, with respect to the standard Frechet
topologies for rapidly decreasing sequences and smooth functions.
The product (\ref{product2}) is also a continuous function of
$\zeta$, and hence is nonzero. This implies that $\widetilde g_2$
has a triangular factorization which is the limit of the
triangular factorizations of the finite products $\widetilde
g_2^{(n)}$. By Theorem \ref{Gtheorem1} and continuity, this
factorization will have the special form in (II.3). Thus (II.2)
implies (II.1) and (II.3).
\end{proof}

\begin{remark}\label{proofofconjecture2} We now want to give a naive argument for Conjecture \ref{conjecture2}.
Suppose that we are given $g_2$ as in (II.1) and (II.3). Recall
that $l_2$ has values in $\dot N^+$. We can therefore write
\begin{equation}\label{x*defn}l_2=\exp(\sum_{j=1}^{\infty}x_j^*f_{\tau_j}),\quad
x_j^*\in\mathbb C.\end{equation} (the use of $x^*$ for the
coefficients is consistent with our notation in the $SU(1,1)$ case,
see (II.3) of Theorem \ref{SU(1,1)theorem1}).

As a temporary notation, let $X$ denote the set of $g_2$ as in
(II.1) and (II.3); $x^*$ is a global linear coordinate for this
space. We consider the map
\begin{equation}\label{maps}c^{\infty}
\to X\text{ given by }\zeta \mapsto g_2.\end{equation} This map induces bijective
correspondences among finite sequences $\zeta$, $g_2\in X\cap
L_{fin}\dot K$ and finite sequences $x^*$, and the maps $\zeta$ to
$x^*$ and $x^*\to \zeta$ are given by rational maps (i.e. rational in $\zeta$ and $\bar{\zeta}$); however
(although it seems likely) it is not known that the limits of
these rational maps actually make sense even for rapidly
decreasing sequences (see the Appendix of \cite{P3} for the
$SU(2)$ case). We will use an inverse function argument to show
that the map (\ref{maps}) has a global inverse (technically, to
apply the inverse function theorem, we should consider the maps of
Sobolev spaces $w^n \to X^n$, where $X^n$ is the $W^n$ completion
of $X$, but we will suppress this).

Given a variation of $\zeta$, denoted $\zeta^{\prime}$, we can
formally calculate the derivative of this map,
\begin{equation}\label{derivativeform}g_2^{-1}g_2'=\sum_{n=1}Ad(g_2^{(n-1)})^{-1}(i_{\tau_n}(
\mathbf a(\zeta_n)\begin{pmatrix} 1&\bar{\zeta}_n\\
-\zeta_n&1\end{pmatrix}\{\mathbf
a(\zeta_n)'\begin{pmatrix} 1&\bar{\zeta}_
n\\
\zeta_n&1\end{pmatrix}+\mathbf
a(\zeta_n)\begin{pmatrix} 0&\bar{\zeta}_
n'\\
\zeta_n'&0\end{pmatrix}\}))$$
$$=\sum_{n=1}Ad(g_2^{(n-1)})^{-1}(i_{\tau_n}(
\mathbf a(\zeta_n)^{-1}\mathbf a(\zeta_n)'\begin{pmatrix} 1&0\\
0&1\end{pmatrix}+\mathbf a(\zeta_n)^2\begin{pmatrix}
\bar{\zeta}_n\zeta_
n'&\bar{\zeta}_n'\\
\zeta_n'&\zeta_n\zeta_n'\end{pmatrix}))\end{equation}
$$=\sum_{n=1}Ad(g_2^{(n-1)})^{-1}(i_{\tau_n}(\mathbf a(\zeta_n)^2\begin{pmatrix} \frac
12(\bar{\zeta}_n\zeta_n'-\zeta_n\bar{\zeta}_n')&\bar{\zeta}_n'\\
\zeta_n'&-\frac
12(\bar{\zeta}_n\zeta_n'-\zeta_n\bar{\zeta}_n')\end{pmatrix} ))$$ As before it is clear that this is convergent, so that
(\ref{maps}) is smooth. At $\zeta=0$ this is clearly injective
with closed image, so that there is a local inverse. Consider more
generally a fixed $g_2\in X \cap L_{fin}\dot G_0$, so that
$g_2^{(n-1)}=g_2$ for large $n$. Recall that the root spaces for
the $\tau_n$ are independent and fill out $\dot{\mathfrak
n}^{-}(z\mathbb C[z])$. Given a variation such that
$g_2^{-1}g_2^{\prime}=0$, the terms in the last sum in the
derivative formula (\ref{derivativeform}) must be zero for large
$n$. But we know that the map (\ref{maps}) is a bijection on
finite $\zeta$. Thus for a variation of a finite number of
$\zeta_j$ which maps to zero, the variation vanishes. It is clear
that the image of the derivative (\ref{derivativeform}) is closed.
The image is therefore the tangent space to $X$ (because we know
that finite variations will fill out a dense subspace of the
tangent space). This implies there is a local inverse. This local
inverse is determined by its values on finite $x^*$, and hence
there is a uniquely determined global inverse. This shows that
(II.1) and (II.3) imply (II.2).

Finally (\ref{product2}) follows by continuity from
(\ref{productformula}).
\end{remark}

\subsection{Generalization of Theorem
\ref{SU(1,1)theorem2}}\label{topstratum2}

\begin{theorem}\label{Gsmooththeorem}Suppose $\widetilde g \in \widetilde L\dot G_0$ and $\Pi(\widetilde g)=g$.
\begin{enumerate}
\item[(a)] The following are equivalent:
\begin{enumerate}
\item[(i)] $\widetilde g$ has a triangular factorization $\widetilde
g=lmau$, where $l$ and $u$ have $C^{\infty}$ boundary values, and satisfy the conditions
$l(z),u^{-1}(z)\in \dot G_0\dot B^+$ for all $z\in S^1$.
\item[(ii)] $\widetilde g$ has a (partial root subgroup) factorization of the form
$$\widetilde g=\Theta(\widetilde g_1^*)
\exp(\chi)\widetilde g_2,$$ where $\chi \in \widetilde L\dot{\mathfrak t}$,
and $\widetilde g_1$ and $\widetilde g_2$ are as in (I.3) and (II.3) of Theorem
\ref{Gtheorem1smooth}, respectively.
\end{enumerate}
\item[(b)] In reference to (ii) of part (a),
\begin{equation}\label{diagonalclaim}a(\widetilde g)=a(g)=
a(g_1)a(\exp(\chi))a(g_2),\quad
\Pi(a(g))=\Pi(a(g_1))\Pi(a(g_2))
\end{equation}
and
\begin{equation}\label{abeliandiagonal}
a(\exp(\chi))=\vert \sigma_0
\vert(\exp(\chi))^{h_0}\prod_{j=1}^r \vert \sigma_0
\vert(\exp(\chi))^{\check a_j h_j}.
\end{equation}
\end{enumerate}
\end{theorem}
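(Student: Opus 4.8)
The plan is to prove the equivalence in (a) by two implications, extracting (b) from the factorization built along the way, and to work throughout at the level of loops, lifting to the extension via Proposition \ref{notationlemma}(d). The organizing observation is that on $S^1$ one has $\Theta(\widetilde g_1^*)=\widetilde g_1^{-1}$: since $\Theta(x^*)=\sigma(x^{-1})$ and $\sigma$ fixes $\dot G_0$ pointwise, condition (ii) is the loop form of a Gauss-type decomposition $g=g_1^{-1}\exp(\chi)g_2$ that specializes to Theorem \ref{SU(1,1)theorem2} in rank one. I also record a reduction: because $g(z)\in\dot G_0$ and $d=ma\in\dot B^+$, the identity $l=g\,u^{-1}d^{-1}$ shows $l(z)\in\dot G_0\dot B^+$ if and only if $u(z)^{-1}\in\dot G_0\dot B^+$, so the two pointwise conditions in (i) are equivalent and it suffices to control one.

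For (ii) $\Rightarrow$ (i) together with (b), I substitute the triangular factorizations $\widetilde g_1=l_1a_1u_1$ and $\widetilde g_2=l_2a_2u_2$ from (I.3)/(II.3) of Theorem \ref{Gtheorem1smooth} and the abelian factorization $\exp(\chi)=\exp(\chi_-)\exp(\chi_0)\exp(\chi_+)$. By Proposition \ref{notationlemma}(a), $\Theta(\widetilde g_1^*)=\Theta(u_1^*)\,a_1\,\Theta(l_1^*)$, whose upper factor $\Theta(l_1^*)\in H^0(\Delta,\dot N^+)$ is pointwise $\dot N^+$-valued, exactly as is $l_2$; collecting terms exhibits $\widetilde g=l\cdot ma\cdot u$. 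Since $\Theta(l_1^*)$ and $l_2$ are pointwise unipotent, they are invisible to the fundamental coefficients $\sigma_j$, which see only the diagonal, so $a(\widetilde g)=a(g_1)\,a(\exp\chi)\,a(g_2)$, proving the first line of (b); this is manifestly nonzero by the product formulas (\ref{product01}), (\ref{product2}), so a genuine triangular factorization exists. The factor (\ref{abeliandiagonal}) is the central cocycle produced in commuting $\exp(\chi_+)$ past $\exp(\chi_-)$ (the unipotent and torus pieces contribute none, since their roots cannot cancel), recorded by $|\sigma_0|(\exp\chi)$; since $h_0=c-\dot h_{\dot\theta}$ and $\dot h_{\dot\theta}=\sum_j\check a_j\dot h_j$ this factor is purely central, giving $\Pi(a(g))=\Pi(a(g_1))\Pi(a(g_2))$. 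For the pointwise condition, the one-sided factors already lie in the open orbit: by the orbit equivalence above applied to $\Theta(\widetilde g_1^*)$, its lower factor satisfies $\Theta(u_1^*)(z)\in\dot G_0\dot B^+$ because $\Theta(l_1^*)(z)\in\dot N^+\subset\dot B^+$, and likewise $u_2(z)^{-1}\in\dot G_0\dot B^+$; I then track that these memberships survive the reorganization of the $\dot B^+$-valued inner block $\Theta(l_1^*)\exp(\chi)l_2$.

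For (i) $\Rightarrow$ (ii), given $\widetilde g=lmau$ with the pointwise condition, I manufacture the one-sided factors by feeding boundary data into Theorem \ref{Gtheorem1smooth}. In a faithful highest-weight representation $V(\pi)$ carrying the $\dot G_0$-invariant Hermitian form, the condition $u^{-1}(z)\in\dot G_0\dot B^+$ says exactly that the form is definite on the line through $\pi(u(z)^{-1})v$, which is the nonvanishing in $\Delta$ demanded by (II.1); this produces $\widetilde g_2$ as in (II.3). Symmetrically, the condition on $l(z)$ and lowest-weight vectors produces $\widetilde g_1$ as in (I.3). The residual $\exp(\chi):=\Theta(\widetilde g_1^*)^{-1}\widetilde g\,\widetilde g_2^{-1}$ is then forced to be $\dot T$-valued — the positive diagonal having been absorbed into $a_1,a_2$ — hence $\chi\in\widetilde L\dot{\mathfrak t}$, which is (ii); uniqueness is inherited from that of the triangular factorization.

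The main obstacle I expect is the analytic core of (i) $\Rightarrow$ (ii): converting the pointwise open-orbit condition into genuine holomorphic, everywhere-nonvanishing extensions of the defining matrix coefficients of $\widetilde g_2$ and $\widetilde g_1$ — that is, verifying the full strength of (II.1)/(I.1) rather than mere existence of a triangular factorization. Controlling the zeros of these coefficients throughout $\Delta$, uniformly across representations, is the delicate point; it is precisely the phenomenon whose failure is exhibited by the loop $g=g_2^*$ in the example following Theorem \ref{SU(1,1)theorem2}. A secondary technical point is the bookkeeping in (ii) $\Rightarrow$ (i) showing the open-orbit memberships are preserved through the reorganization of the inner block.
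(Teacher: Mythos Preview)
Your direction (ii)$\Rightarrow$(i) is essentially the paper's, but one step is not correctly justified. You write that $\Theta(l_1^*)$ and $l_2$ are ``invisible to the fundamental coefficients $\sigma_j$''. In the affine triangular decomposition, however, $\Theta(l_1^*)\in N^+$ while $l_2\in N^-$, so in the product $a_1\,\Theta(l_1^*)\exp(\chi)\,l_2\,a_2$ these factors sit on the \emph{wrong} sides for the $N^-\backslash\cdot/N^+$ bi-invariance of $\sigma_j$; they are not automatically invisible. The paper's way around this is the pointwise observation that $b=\Theta(l_1^*)\exp(\chi)l_2$ takes values in $\dot B^+$, so that one can use the semidirect product $\widetilde L\dot B^+=\widetilde L\dot H\ltimes L\dot N^+$ together with the explicit Heisenberg factorization of $\exp(\chi)$ to compute the triangular factorization of $b$ directly; this yields $a(b)=a(\exp\chi)$ and hence (b). Your conclusion is right, but the reason is the $\dot B^+$-valuedness of the inner block, not a general invisibility of unipotents to $\sigma_j$.

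The genuine gap is in (i)$\Rightarrow$(ii). You never say what $\widetilde g_2$ (or $\widetilde g_1$) is, and the route you sketch --- verifying (II.1)/(I.1) from the boundary condition --- runs into exactly the obstacle you name: the hypothesis $u^{-1}(z)\in\dot G_0\dot B^+$ is only on $S^1$, while (II.1) demands nonvanishing of $\pi(g_2^{-1})v$ throughout $\Delta$. The paper avoids this entirely. It uses the boundary condition to produce, for each $z\in S^1$, pointwise decompositions
\[
l(z)^{-1}=\dot n_1(z)\,\dot a_1(z)\,\dot g_1(z),\qquad u(z)=\dot n_2(z)\,\dot a_2(z)\,\dot g_2(z)
\]
in $\dot N^+\dot A\dot G_0$ (this is precisely what the condition $l(z),u^{-1}(z)\in\dot G_0\dot B^+$ buys). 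It then Birkhoff-factors the $\dot A$-valued loops $\dot a_i^{-1}=\exp(\chi_i^*+\chi_{i,0}+\chi_i)$ and \emph{defines}
\[
g_i:=\exp(-\chi_i^*+\chi_i)\,\dot g_i\in L\dot G_0.
\]
A short algebraic manipulation shows directly that $g_2$ has a triangular factorization with $l(g_2)\in H^0(\Delta^*,\infty;\dot N^+,1)$ (this is (II.3), not (II.1)), and symmetrically for $g_1$. Finally one checks that $\Theta(g_1^*)^{-1}g\,g_2^{-1}$ is simultaneously $\dot G_0$-valued and $\dot B^+$-valued on $S^1$, hence $\dot T$-valued, giving $\exp(\chi)$. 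No control of interior zeros is needed, because one verifies (II.3)/(I.3) directly rather than (II.1)/(I.1).
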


\begin{remarks} Suppose that $\dot G_0=SU(1,1)$. In this case the last condition in (i) in Theorem
\ref{Gsmooththeorem}, that
$l(z),u^{-1}(z)\in \dot G_0\dot B^+$, is equivalent to the condition in Theorem \ref{SU(1,1)theorem2} that
the boundary values $l_{21}/l_{11}$
and $u_{21}/u_{22}$ are $<1$ in magnitude on $S^1$, and part (b) specializes to the statement of
Theorem \ref{Toeplitz}.
\end{remarks}

\begin{proof} Our strategy of proof is the following. We will first show that in
part (a), (ii) implies (i). In the process we will prove part (b). We will then
show that (i) implies (ii).

Suppose that we are given $\widetilde g$ as in (ii). Both $\widetilde g_1$ and $\widetilde g_2$
have triangular factorizations by Theorem \ref{Gtheorem1smooth}. In the notation of Theorem
\ref{Gtheorem1smooth},
\begin{equation}\label{theta_included}
\widetilde g=\Theta((l_1a_1u_1)^*)\exp(\chi)(l_2a_2u_2)=\Theta(u_1^*) a_1
(\Theta(l_1^*)\exp(\chi)l_2)a_2u_2
\end{equation}
since $\Theta$ preserves the $A$ factor.
The basic observation is that
\begin{equation}\label{bfactor}
b=\Theta(l_1^*)\exp(\chi)l_2\in (\widetilde L\dot B^+)_0
\end{equation}
(the inverse image in the affine extension for
the identity component of loops in $\dot B^+$), and $b$ will have a triangular factorization which
we can compute. To do this requires some care with the central extension, and this involves some preparation.

Because $\dot B^+$ is the semidirect product of $\dot H$ and $\dot N^+$, there is an isomorphism of loop groups
\[
L\dot B^+=L\dot H\ltimes L\dot N^+.
\]
The central extension is trivial for $L\dot N^+$,
and hence there is an isomorphism
\[
\widetilde L\dot B^+=\widetilde L\dot H\ltimes L\dot N^+
\]
where the action of $\widetilde L\dot H$ on $L\dot N^+$ is the same as the conjugation action of
$L\dot H$ on $L\dot N^+$, and $\widetilde L\dot H$ is a Heisenberg extension determined by the bracket
(\ref{bracket}).

Given $\chi \in \widetilde L\dot{\mathfrak t}$ as above, let
$\chi=\chi_-^*+\chi_0+\chi_+$ denote the linear triangular decomposition, where
$\chi_0\in
\mathfrak t$, $\chi_+\in H^0(\Delta,0;\dot{\mathfrak
h},0)$ and $\chi_-=-\chi_+^*$. Then (calculating in terms of the Heisenberg extension)
\begin{eqnarray*}
\exp(\chi)& = & \exp(\chi_-)\exp(\chi_0)\exp(-[\chi_-,\chi_+])\exp(\chi_+) \\
&= & \exp(\chi_-)\exp(\chi_0)\exp(\sum_{j=1}^{\infty}j\langle\chi_j,\chi_j\rangle c)\exp(\chi_+).
\end{eqnarray*}
Substituting this into (\ref{bfactor}) we find
\[
b=\exp(\chi_-)b_1\exp(\chi_+)
\]
where
\[
b_1=\exp(-\chi_-)\Theta(l_1^*)\exp(\chi_-)\exp(\chi_0)\exp(\sum_{j=1}^{\infty}j\langle\chi_j,\chi_j\rangle c)\exp(\chi_+)l_2\exp(-\chi_+).
\]
Thus, $b$ has a triangular factorization
\[
b=\left(\exp(\chi_-)L\right)\left(m(b)a(b)\right)\left(U\exp(\chi_+)\right),
\]
where $m(b)=m(b_1)=\exp(\chi_0)$, $a(b)=a(b_1)=\exp(\sum_{j=1}^{\infty}j\langle\chi_j,\chi_j\rangle c)$,
\[
L=l(\exp(-\chi_-)\Theta(l_1^*)\exp(\chi_-)\exp(\chi_0)\exp(\chi_+)l_2\exp(-\chi_+))\in H^0(\Delta^*,\infty;\dot
N^+,1),
\]
and
\[
U=u(\exp(-\chi_-)\Theta(l_1^*)\exp(\chi_-)\exp(\chi_0)\exp(\chi_+)l_2\exp(-\chi_+))\in H^0(\Delta;\dot
N^+).
\]
Thus, from (\ref{theta_included}), $\widetilde g$ will have a triangular factorization $l(\widetilde g)m(\widetilde g)a(\widetilde g)u(\widetilde g)$ with
\begin{equation}\label{gfactorization}
l(\widetilde g)=\Theta(u_1^*)\exp(\chi_-)a_1L a_1^{-1},\quad
m(\widetilde g)=m(b)=\exp(\chi_0),
\end{equation}
\[
a(\widetilde g)=a_1a_2\exp(\sum_{j=1}^{\infty}j\langle\chi_j,\chi_j\rangle c),\quad
u(\widetilde g)=a_2^{-1}Ua_2\exp(\chi_+)u_2.
\]
Thus, (ii) implies (i) in part (a).  At the same time this also implies part (b).

Now we need to show that (i) implies (ii). For this direction, there is not any need to consider the central extension, so we will no longer use tildes for group elements.

Suppose $g=lmau$, as in (i). At each point of the
circle there exist $\dot N^+ \dot A \dot G_0$ decompositions
\begin{equation}\label{questionable}
l^{-1}=\dot n_1\dot a_1 \dot g_1,\quad u=\dot n_2
\dot a_2 \dot g_2.
\end{equation}
This is a consequence of the somewhat bizarre hypotheses in (i).  Then $\dot g_1=\Theta(\dot g_1^{-1})^*=\dot a_1^{-1}\Theta (\dot n_1^*)\Theta(\ell^*)$ since $\dot g_2\mapsto \Theta(\dot g_2^{-1})^*$ is the involution fixing $\dot G_0$ in $\dot G$, and $\Theta$ acts as the inverse on $\dot A$ under the inner type assumption.

In turn, there are Birkhoff decompositions
\[
\dot a_i^{-1}=\exp(\chi_i^*+\chi_{i,0}+\chi_i),
\quad \chi_i\in H^0(\Delta,\dot \h),\quad \chi_{i,0}\in \dot
\h_{\mathbb R}
\]
for $i=1,2$. Define
\[
g_i=\exp(-\chi_i^*+\chi_i)\dot g_i
\]
for $i=1,2$. Then
\[
g_1=\exp(-\chi_{1,0}-2\chi_1^*)\Theta(\dot n_1^*)\Theta(\ell^*)
\]
has
triangular factorization with
\[
l(g_1)=l(\exp(-\chi_{1,0}-2\chi_1^*)\Theta(\dot n_1^*)\exp(\chi_{1,0}+2\chi_1^*))\in H^0(\Delta^*,\infty; \dot N^-,1),
\]
\[
\Pi(a(g_1))=\exp(\chi_{1,0}),
\]
and similarly
\[
g_2=\exp(2\chi_2+\chi_{2,0})\dot n_2^{-1}u
\]
has
triangular factorization with
\[
l(g_2)=l(
\exp(2\chi_2+\chi_{2,0})\dot n_2^{-1}\exp(-2\chi_2-\chi_{2,0}))\in
H^0(\Delta^*,\infty;\dot N^+,1),
\]
\[
\Pi(a(g_2))=\exp(\chi_{2,0}).
\]

The conclusion is somewhat miraculous.  On the one hand $\Theta(g_1^*)^{-1} g
g_2^{-1} $ has values in $\dot G_0$ because $g_1\mapsto \Theta(g_1^*)^{-1}$ is the pointwise involution fixing $\dot G_0$ in $\dot G$.  On the other hand
\begin{eqnarray}
\Theta(g_1^*)^{-1} g g_2^{-1} & = & \Theta(g_1^*)^{-1}lma (\exp(2\chi_2+\chi_{2,0})\dot n_2^{-1}u)^{-1} \nonumber \\
& = & \Theta (g_1^*)^{-1}lma\dot n_2\exp(-2\chi_2-\chi_{2,0}) \nonumber \\
& = & \exp(-\chi_{1,0}-2\chi_1^*)\dot n_1ma\dot n_2\exp(-2\chi_2-\chi_{2,0}) \label{uppert}
\end{eqnarray}
has values in $\dot B^+$. Therefore $\Theta(g_1^*)^{-1} g g_2^{-1}$ has values in $\dot G_0\cap\dot B^+=\dot T$. It is
also clear that (\ref{uppert}) is connected to the identity, and
hence $\Theta(g_1^*)^{-1} g g_2^{-1}\in (L\dot T)_0$ and thus equals $\exp(\chi)$. Hence, $g=\Theta(g_1^*)\exp(\chi)g_2$.  Thus (i) implies (ii).
\end{proof}

\end{document}